\documentclass[11pt]{amsart}
\pdfoutput=1

\usepackage{amssymb,mathtools,stmaryrd}
\usepackage{enumitem}
\usepackage{microtype}

\usepackage{booktabs}
\usepackage{pgfplotstable}

\usepackage[margin=1in,marginparwidth=0.75in]{geometry}

\usepackage[numbers,compress]{natbib}

\usepackage{hyperref}
\hypersetup{
  pdftitle={A nonconforming primal hybrid finite element method for the two-dimensional vector Laplacian},
  pdfauthor={Mary Barker, Shuhao Cao, and Ari Stern},
  pdfsubject={MSC 2020: 65N30, 65N15, 35Q60},
  pdfkeywords={nonconforming finite element methods, hybridization, hydridizable discontinuous Galerkin methods, vector Laplacian, weighted Sobolev spaces},
  bookmarksopen=false
}

\usepackage[capitalize,nameinlink]{cleveref}

\makeatletter
\g@addto@macro\bfseries{\boldmath}
\makeatother

\theoremstyle{plain} 
\newtheorem{theorem}{Theorem}[section]
\newtheorem{lemma}[theorem]{Lemma}
\newtheorem{corollary}[theorem]{Corollary}

\newtheorem{conjecture}[theorem]{Conjecture}

\theoremstyle{definition}
\newtheorem{definition}[theorem]{Definition}

\theoremstyle{remark}
\newtheorem{remark}[theorem]{Remark}

\newcommand\llbrace{\{\!\!\{}
\newcommand\rrbrace{\}\!\!\}}

\begin{document}

\title[Nonconforming primal hybrid FEM for the 2D vector Laplacian]{A
  nonconforming primal hybrid finite element method for the
  two-dimensional vector Laplacian}

\author{Mary Barker}
\address{Public Health Sciences Division, Fred Hutchinson Cancer Center}
\email{marybarker103@gmail.com}
\thanks{MB was supported by the National Science Foundation under Grant No.\ DMS-1913272.}

\author{Shuhao Cao}
\address{Division of Computing, Analytics, and Mathematics, University of Missouri--Kansas City}
\email{scao@umkc.edu}
\thanks{SC was supported by the National Science Foundation under Grant No.\ DMS-2136075.}

\author{Ari Stern}
\address{Department of Mathematics, Washington University in St.~Louis}
\email{stern@wustl.edu}
\thanks{AS was supported by the National Science Foundation under Grant Nos.\ DMS-1913272 and DMS-2208551.}

\begin{abstract}
  We introduce a nonconforming hybrid finite element method for the
  two-dimensional vector Laplacian, based on a primal variational
  principle for which conforming methods are known to be
  inconsistent. Consistency is ensured using penalty terms similar to
  those used to stabilize hybridizable discontinuous Galerkin (HDG)
  methods, with a carefully chosen penalty parameter due to
  \citeauthor*{BrLiSu2007} [Math.\ Comp., 76 (2007),
  pp.~573--595]. Our method accommodates elements of arbitrarily high
  order and, like HDG methods, it may be implemented efficiently using
  static condensation. The lowest-order case recovers the
  $ P _1 $-nonconforming method of \citeauthor*{BrCuLiSu2008} [Numer.\
  Math., 109 (2008), pp.~509--533], and we show that higher-order
  convergence is achieved under appropriate regularity
  assumptions. The analysis makes novel use of a family of weighted
  Sobolev spaces, due to \citeauthor{Kondratiev1967}, for domains
  admitting corner singularities.
\end{abstract}

\maketitle

\section{Introduction}
\label{s:intro}

Given a bounded polygonal domain $ \Omega \subset \mathbb{R}^2 $,
$ f \in \bigl[ L ^2 (\Omega) \bigr] ^2 $, and
$ \alpha \in \mathbb{R} $, we consider the primal variational problem:
Find
$ u \in H ( \operatorname{div}; \Omega ) \cap \mathring{ H } (
\operatorname{curl} ; \Omega ) $ such that, for all
$v \in H ( \operatorname{div}; \Omega ) \cap \mathring{ H } (
\operatorname{curl} ; \Omega )$,
\begin{equation}
  \label{e:primal}
  ( \nabla \cdot u , \nabla \cdot v ) _\Omega + ( \nabla \times u , \nabla \times v ) _\Omega + \alpha ( u, v ) _\Omega = ( f, v ) _\Omega,
\end{equation}
where $ (\cdot , \cdot )_\Omega $ denotes the $ L^2 $ inner product on
$\Omega$. Here, we recall the familiar spaces
\begin{align*}
  H ( \operatorname{div} ; \Omega ) &\coloneqq \bigl\{ v \in \bigl[ L ^2 (\Omega) \bigr] ^2 : \nabla \cdot v \in L ^2 (\Omega) \bigr\},\\
  H ( \operatorname{curl} ; \Omega ) &\coloneqq \bigl\{ v \in \bigl[ L ^2 (\Omega) \bigr] ^2 : \nabla \times v \in L ^2 (\Omega) \bigr\},\\
  \mathring{H} ( \operatorname{curl} ; \Omega ) &\coloneqq \bigl\{ v \in H ( \operatorname{curl}; \Omega ) : v \times n = 0 \text{ on }  \partial \Omega \bigr\},
\end{align*}
where $n$ is the outer unit normal vector field on the boundary. The
strong form of \eqref{e:primal} is the boundary value problem
\begin{subequations}
  \label{e:strong}
  \begin{alignat}{2}
    - \nabla \nabla \cdot u + \nabla \times \nabla \times u + \alpha u &= f \quad &\text{in }&\Omega ,\label{e:strong_pde}\\
    u \times n &= 0 \quad &\text{on }& \partial \Omega ,\label{e:strong_bc_essential}\\
    \nabla \cdot u &= 0 \quad &\text{on }& \partial \Omega ,\label{e:strong_bc_natural}
  \end{alignat}
\end{subequations}
noting that
$ - \nabla \nabla \cdot {} + \nabla \times \nabla \times {} = - \Delta
$ is the negative vector Laplacian.

It is well established that conforming finite element methods for
\eqref{e:primal} have severe difficulties. For instance, a finite
element approximation that is both div- and curl-conforming will also
be $ H ^1 $-conforming, but when $ \Omega $ is non-convex,
$ \bigl[ H ^1 (\Omega) \bigr] ^2 \subsetneq H ( \operatorname{div} ;
\Omega ) \cap H ( \operatorname{curl} ; \Omega ) $ is a proper closed
subspace (see, e.g., \citep{BiSo1987,GiRa1986}).
Consequently, such a method
will fail to converge to solutions with reentrant corner
singularities. Even when $\Omega$ is convex, other problems may arise,
e.g., spurious modes for the eigenvalue problem \citep{BoDuGa1999,ArFaWi2010}. 
For this reason, the primal formulation \eqref{e:primal} is often avoided in favor of mixed
formulations of \eqref{e:strong} that are \emph{either} div- or
curl-conforming, but not both.

\citet*{BrCuLiSu2008} developed a $ P _1 $-\emph{nonconforming} primal
interior-penalty method that bypasses these difficulties. Let
$ \mathcal{T} _h $ be a conforming triangulation of $\Omega$, and let
$ \mathcal{E} _h $ denote the set of edges, partitioned into interior
edges $ \mathcal{E} _h ^\circ $ and boundary edges
$ \mathcal{E} _h ^\partial $. Denote the broken $ L ^2 $ inner
products
$ ( \cdot , \cdot ) _{ \mathcal{T} _h } \coloneqq \sum _{ K \in
  \mathcal{T} _h } ( \cdot , \cdot ) _K $,
$ \langle \cdot , \cdot \rangle _{ \mathcal{E} _h ^\circ } \coloneqq
\sum _{ e \in \mathcal{E} _h ^\circ } \langle \cdot , \cdot \rangle _e
$, and
$ \langle \cdot , \cdot \rangle _{ \mathcal{E} _h ^\partial }
\coloneqq \sum _{ e \in \mathcal{E} _h ^\partial } \langle \cdot ,
\cdot \rangle _e $. The method of \citep{BrCuLiSu2008} is based on the
variational problem
\begin{multline}
  \label{e:brenner}
  ( \nabla \cdot u _h , \nabla \cdot v _h ) _{ \mathcal{T} _h } + (
  \nabla \times u _h , \nabla \times v _h ) _{ \mathcal{T} _h } +
  \alpha ( u _h , v _h ) _{ \mathcal{T} _h } + \bigl\langle \gamma
  \llbracket u _h \rrbracket , \llbracket v _h \rrbracket \bigr\rangle
  _{ \mathcal{E} _h ^\circ } + \langle \gamma
  u _h \times n , v _h \times n \rangle _{ \mathcal{E} _h ^\partial } \\
  = ( f , v _h ) _{ \mathcal{T} _h },
\end{multline}
where $ \gamma \rvert _e \coloneqq \gamma _e > 0 $ is a penalty
parameter on each $ e \in \mathcal{E} _h $, to be detailed further in
\cref{s:method}, and $ \llbracket \cdot \rrbracket $ is the jump in
both tangential and normal components across an interior edge. (See
also \citet*{BrLiSu2007,BrLiSu2008,BrLiSu2009a,BrLiSu2009b} for
related work on curl-curl source problems and eigenproblems arising in
Maxwell's equations.) In \citep{BrCuLiSu2008}, $ u _h $ and $ v _h $
are linear vector fields continuous at the midpoint of each
$ e \in \mathcal{E} _h ^\circ $ (i.e., both components live in the
$ P _1 $-nonconforming space of \citet{CrRa1973}) whose tangential
components vanish at the midpoint of each
$ e \in \mathcal{E} _h ^\partial $. \citet{BrSu2009} later developed a
quadratic nonconforming element for this problem and conjectured that
it could be generalized to higher degree, as well as to dimension
three. The two-dimensional conjecture was subsequently proved by
\citet{Mirebeau2012}, who also gave a counterexample to the
three-dimensional case.  However, for $ k > 1 $, the order-$k$
elements are not simply $ P _k $ vector fields: they are enriched by
additional vector fields up to degree $ 2 k -1 $ that are gradients of
harmonic polynomials.

In this paper, we present a three-field primal hybridization of
\eqref{e:brenner} in the following form: Find
$ ( u _h , p _h , \hat{u} _h ) \in V _h \times Q _h \times \hat{V} _h
$ such that, for all
$ ( v _h , q _h , \hat{v} _h ) \in V _h \times Q _h \times \hat{V} _h
$,
\begin{subequations}
  \label{e:hybrid}
  \begin{align}
    ( \nabla \cdot u _h , \nabla \cdot v _h ) _{ \mathcal{T} _h } + ( \nabla \times u _h , \nabla \times v _h ) _{ \mathcal{T} _h } +\alpha ( u _h , v _h ) _{ \mathcal{T} _h } + \langle \hat{p} _h , v _h \rangle _{ \partial \mathcal{T} _h } &= ( f , v _h ) _{ \mathcal{T} _h } , \label{e:hybrid_v}\\
    \langle u _h - \hat{u} _h , q _h \rangle _{ \partial \mathcal{T} _h } &= 0 , \label{e:hybrid_q}\\
    \langle \hat{p} _h , \hat{v} _h \rangle _{ \partial \mathcal{T} _h } &= 0 , \label{e:hybrid_vhat}
  \end{align}
\end{subequations}
where $ \hat{p} _h \coloneqq p _h + \gamma ( u _h - \hat{u} _h ) $ and
$ \langle \cdot , \cdot \rangle _{ \partial \mathcal{T} _h } \coloneqq
\sum _{ K \in \mathcal{T} _h } \langle \cdot , \cdot \rangle _{
  \partial K } $. With appropriately chosen finite element spaces, as detailed in \cref{s:method}, this method has the following properties:
\begin{itemize}
\item The lowest-order case is a hybridization of the method of
  \citet{BrCuLiSu2008}.

\item Arbitrarily high order may be obtained using standard polynomial
  finite elements. The more exotic Brenner--Sung--Mirebeau spaces and
  projections play a crucial role in the analysis but are not needed
  for implementation.

\item As with HDG methods \citep{CoGoLa2009}, the hybrid formulation
  enables efficient local assembly and static condensation, where
  $ u _h $ and $ p _h $ may be eliminated to solve a smaller global
  system involving only the approximate trace $ \hat{u} _h $.
\end{itemize}
In addition to these contributions, we also present a novel error
analysis using weighted Sobolev spaces, cf.~\citet{CoDa2002}. 
This approach allows us to obtain error estimates on domains admitting
corner singularities, without imposing the mesh-grading conditions on
$ \mathcal{T} _h $ required by \citep{BrCuLiSu2008}.

The paper is organized as follows. In \cref{s:method}, we describe the
method and discuss its fundamental properties. Next, in
\cref{s:analysis}, we present the error analysis of the
method. Finally, in \cref{s:experiments}, we present the results of
numerical experiments, which demonstrate and confirm the analytically
obtained convergence results.

\section{The hybrid method}
\label{s:method}

\subsection{Description of the method}
\label{s:description}

The proposed method is based on the variational problem
\eqref{e:hybrid}, using the following finite element spaces. Given a
positive integer $k$, define
\begin{equation*}
  V _h \coloneqq \prod _{ K \in \mathcal{T} _h } \bigl[ P _{ 2 k -1 } (K) \bigr] ^2 , \qquad Q _h \coloneqq \prod _{ K \in \mathcal{T} _h } \prod _{ e \subset \partial K } \bigl[ P _{ k -1 } (e) \bigr] ^2 .
\end{equation*}
These are ``broken'' finite element spaces, with no inter-element
continuity or boundary conditions imposed, so vector fields in these
spaces are generally double-valued at interior edges. To (weakly)
impose inter-element continuity and boundary conditions, we define
\begin{equation*}
  \hat{V} _h \coloneqq \biggl\{ \hat{v} _h \in \prod _{ e \in \mathcal{E} _h } \bigl[ P _{ 2 k -1 } (e) \bigr] ^2 : \hat{v} _h \times n = 0 \text{ on } \mathcal{E} _h ^\partial \biggr\} ,
\end{equation*}
whose elements are single-valued on edges.

The extra variables $ p _h $ and $ \hat{u} _h $, and their role in the
variational problem, may be understood as follows. From
\eqref{e:hybrid_q}, we see that $ p _h $ acts as a Lagrange
multiplier, constraining the degree $ \leq k -1 $ moments of $ u _h $
and $ \hat{u} _h $ to agree on $ \mathcal{E} _h $. Consequently,
$ u _h $ satisfies weak inter-element continuity and boundary
conditions, and $ \hat{u} _h $ may be seen as an approximate trace of
$u$. Next, on each $ K \in \mathcal{T} _h $, taking the inner product
of the strong form \eqref{e:strong_pde} with
$ v \in H ( \operatorname{div} ; K ) \cap H ( \operatorname{curl} ; K
) $ and integrating by parts implies that the solution to
\eqref{e:primal} satisfies
\begin{equation}
  \label{e:ibp}
  ( \nabla \cdot u , \nabla \cdot v ) _K + ( \nabla \times u , \nabla \times v ) _K + \alpha ( u , v ) _K - \langle \nabla \cdot u , v \cdot n \rangle _{ \partial K } + \langle \nabla \times u , v \times n \rangle _{ \partial K } = ( f, v ) _K. 
\end{equation}
Comparing with \eqref{e:hybrid_v} and writing
$ \langle \hat{p} _h , v _h \rangle _{ \partial K } = \langle \hat{p}
_h \cdot n , v _h \cdot n \rangle _{ \partial K } + \langle \hat{p} _h
\times n , v _h \times n \rangle _{ \partial K } $, it follows that
$ \hat{p} _h \cdot n \rvert _{ \partial K } $ and
$ \hat{p} _h \times n \rvert _{ \partial K } $ can be seen as
approximating $ - \nabla \cdot u \rvert _{ \partial K } $ and
$ \nabla \times u \rvert _{ \partial K } $, respectively. Finally,
\eqref{e:hybrid_vhat} shows that $ \hat{u} _h $ also acts as a
Lagrange multiplier, constraining $ \hat{p} _h \cdot n $ and
$ \hat{p} _h \times n $ to be single-valued on interior edges and
$ \hat{p} _h \cdot n = 0 $ on boundary edges. The latter may be seen
as an approximation of the natural boundary condition
\eqref{e:strong_bc_natural}.

To ensure convergence of the method for solutions with corner
singularities, the penalty $ \gamma $ must be chosen carefully. Here,
we recall the penalty used by \citet{BrCuLiSu2008}, which is the same
one that we will use.  Denote the corners of $\Omega$ by
$ c _1 , \ldots, c _L $, and let
$ r _\ell (x) \coloneqq \lvert x - c_\ell \rvert $ be the distance
from $ x \in \Omega $ to each corner. Given a multi-exponent
$ \lambda = ( \lambda _1 , \ldots , \lambda _L ) $, we denote
$ r ^\lambda \coloneqq \prod _{ \ell=1 } ^L r _\ell ^{ \lambda _\ell }
$. Now, at each corner $ c_\ell $, with interior angle
$ \omega _\ell $, choose a parameter $ \mu _\ell $ such that
\begin{equation*}
  \mu _\ell = 1 \quad \text{if } \omega _\ell \leq \frac{ \pi }{ 2 } , \qquad \mu _\ell < \frac{ \pi }{ 2 \omega _\ell } \quad \text{if } \omega _\ell > \frac{ \pi }{ 2 } ,
\end{equation*}
and $ \mu \coloneqq ( \mu _1 , \ldots, \mu _L ) $. For each
$ e \in \mathcal{E} _h $, whose midpoint is denoted $m _e$, we then
define
\begin{equation*}
  \Phi _\mu (e) \coloneqq r ^{ 1 - \mu } (m _e) = \prod _{ \ell = 1 } ^L \lvert m _e - c _\ell \rvert ^{ 1 - \mu _\ell } .
\end{equation*}
Finally, the penalty parameter on $e$ is taken to be
\begin{equation*}
  \gamma _e \coloneqq \frac{ \bigl[ \Phi _\mu (e) \bigr] ^2 }{ \lvert e \rvert } ,
\end{equation*}
where $ \lvert e \rvert $ is the length of $e$.  This ensures that
$ \gamma _e \sim 1 / \lvert e \rvert $ away from corners, while being
appropriately weakened near corners to allow convergence to singular
solutions, as we will see in \cref{s:analysis}.

\subsection{The Brenner--Sung--Mirebeau element and projection}

We now recall the nonconforming finite element developed in
\citet{BrSu2009} and \citet{Mirebeau2012}, which we call the
Brenner--Sung--Mirebeau (BSM) element. While it is not used to
implement the method described above, this element and its associated
projection play an important role in the numerical analysis of the
method---and will also make clear why we have taken polynomial spaces
of degrees $ 2 k -1 $ and $ k -1 $.

\begin{definition}
  Given a positive integer $k$, define the
  \emph{Brenner--Sung--Mirebeau (BSM) space} on a triangle
  $K \subset \mathbb{R}^2 $ to be
  \begin{equation*}
    \textit{BSM} _k (K) \coloneqq \bigl[ P _k (K) \bigr] ^2 \oplus \nabla H _{ k + 2 } (K) \oplus \cdots \oplus \nabla H _{ 2 k } (K) ,
  \end{equation*}
  where $ H _j (K) $ is the space of homogeneous harmonic polynomials
  of degree $ j $ on $K$. (By \emph{harmonic}, we mean having
  vanishing Laplacian.)
\end{definition}

We immediately see that
$ \bigl[ P _k (K) \bigr] ^2 \subset \textit{BSM} _k (K) \subset \bigl[ P _{ 2
  k -1 } (K) \bigr] ^2 $, with equality if and only if $ k = 1
$. Indeed, since $ \dim H _j (K) = 2 $ for each $ j $, it
follows that $ \dim \textit{BSM} _k (K) = k ( k + 5 )
$. \citet{BrSu2009} conjectured, and \citet{Mirebeau2012} proved, that
an element $ v _h \in \textit{BSM} _k (K) $ is uniquely determined by
the $ k ( k + 5 ) $ degrees of freedom
\begin{equation*}
  \langle v _h , q _h \rangle _{ \partial K } \,\text{ for }\, q _h \in \prod _{ e \subset \partial K } \bigl[ P _{ k -1 } (e) \bigr] ^2 , \qquad ( v _h , w _h ) _K \,\text{ for }\, w _h \in \bigl[ P _{ k - 2 } (K) \bigr] ^2 .
\end{equation*}
Moreover, the canonical interpolation using these degrees of freedom
naturally defines a projection
$ \Pi _h \colon \bigl[ H ^\sigma (K) \bigr] ^2 \rightarrow \textit{BSM} _k
(K) $, for any $ \sigma > \frac{1}{2} $, such that
\begin{equation}
  \label{e:bsm_projection}
  \langle \Pi _h v , q _h \rangle _{ \partial K } = \langle v, q _h \rangle _{ \partial K } , \qquad   ( \Pi _h v , w _h ) _K = ( v , w _h ) _K , 
\end{equation}
for all $ q _h $ and $ w _h $ as above. Letting
$ P _h \colon L ^2 (K) \rightarrow P _{ k - 1 } (K) $ be the
$ L ^2 $-orthogonal projection for scalar fields, we obtain the
following commuting-projection property; the proof is basically
identical to that in \citet{BrSu2009}.

\begin{lemma}
  \label{l:commuting_projection}
  For all $ v \in \bigl[ H ^\sigma (K) \bigr] ^2 $ with $ \sigma > \frac{1}{2} $
  such that $ \nabla \cdot v, \nabla \times v \in L ^2 (K) $, we have
  \begin{equation*}
    \nabla \cdot \Pi _h v = P _h ( \nabla \cdot v ) , \qquad 
    \nabla \times \Pi _h v = P _h ( \nabla \times v ) .
  \end{equation*}
\end{lemma}

\begin{proof}
  For all $ \phi _h \in P _{ k - 1 } (K) $, integrating by parts using
  the divergence theorem gives
  \begin{equation*}
    \bigl( \nabla \cdot ( v - \Pi _h v ) , \phi _h \bigr) _K = \langle  v - \Pi _h v ,  \phi _h  n \rangle _{ \partial K } - ( v - \Pi _h v , \nabla \phi _h ) _K = 0 ,
  \end{equation*}
  by \eqref{e:bsm_projection} with $ q _h = \phi _h n $ and
  $ w _h = \nabla \phi _h $. This proves the first equality; the proof
  of the second is essentially the same, using Green's theorem instead
  of the divergence theorem.
\end{proof}

The solution to \eqref{e:primal} satisfies the hypotheses of this
lemma on each $ K \in \mathcal{T} _h $, as a result of the regularity
theory discussed in \cref{s:weighted}.

\subsection{Equivalence to reduced methods with jump terms}

We next show that the three-field hybrid method described in
\cref{s:description} may be reduced to a two-field or one-field method
with jump terms. The coupling introduced by the jump terms prevents
static condensation, so we generally prefer the three-field
formulation for implementation. However, these reduced formulations
will be useful analytically, and will help in relating our method to
that of \citet{BrCuLiSu2008}.\footnote{This approach to reduction was
  inspired by some recent papers on ``extended Galerkin'' methods
  \citep{HoWuXu2021,HoLiXu2022}.}

First, we introduce notation and definitions for the average and jump
of a vector field across interior edges. Suppose
$ e \in \mathcal{E} _h ^\circ $ is an interior edge shared by two
triangles, $ K ^+ $ and $ K ^- $, and let $ n ^\pm $ denote the unit
normal to $e$ pointing outward from $ K ^\pm $.  If a vector field $w$
takes values $ w ^\pm $ on the $ K^\pm $ sides of $e$, we define the
average and jump of $w$ at $e$ to be
\begin{equation*}
  \llbrace w \rrbrace _e \coloneqq \frac{1}{2} ( w ^+ + w ^- ), \qquad \llbracket w  \rrbracket _e \coloneqq w ^+ \otimes n ^+ + w ^- \otimes n ^- ,
\end{equation*}
where $w\otimes n \coloneqq w n^\top$ is the outer product. It is
straightforward to see that the $i$-th row of
$ \llbracket w \rrbracket _e $ is the transpose of the usual scalar
jump $ \llbracket w _i \rrbracket _e = w _i ^+ n ^+ + w _i ^- n ^- $
for $ i = 1, 2 $. This definition of $ \llbracket \cdot \rrbracket $
encodes the jump in both tangential and normal directions, without
requiring a global orientation of the edges.  It is then easily
verified that the
$ \langle \cdot , \cdot \rangle _{ \partial \mathcal{T} _h } $ inner
product of vector fields (just as for scalar fields) may be expanded
as
\begin{equation}
  \label{e:edge_identity}
  \langle w , v \rangle _{ \partial \mathcal{T} _h } = 2 \bigl\langle \llbrace w \rrbrace , \llbrace v \rrbrace \bigr\rangle _{ \mathcal{E} _h ^\circ } + \frac{1}{2} \bigl\langle \llbracket w \rrbracket , \llbracket v \rrbracket \bigr\rangle _{ \mathcal{E} _h ^\circ } + \langle w, v \rangle _{ \mathcal{E} _h ^\partial } ,
\end{equation}
where the inner product of the matrix-valued jumps is taken in
the Frobenius sense. Since functions are single-valued on boundary
edges, we leave average and jump undefined on
$ \mathcal{E} _h ^\partial $.

\begin{lemma}
  \label{l:avg}
  If $ ( u _h , p _h , \hat{u} _h ) $ satisfies
  \eqref{e:hybrid_q}--\eqref{e:hybrid_vhat}, then
  \begin{alignat*}{3}
    \llbrace u _h \rrbrace &= \hat{u} _h ,\qquad & \llbrace p _h \rrbrace &= 0 , \quad &\text{on } &\mathcal{E} _h ^\circ,\\
    u _h \cdot n &= \hat{u} _h \cdot n , \qquad & p _h \cdot n &= 0 ,
    \quad &\text{on } & \mathcal{E} _h ^\partial .
  \end{alignat*}
\end{lemma}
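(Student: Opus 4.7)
The plan is to read off the stated relations by localizing equations \eqref{e:hybrid_q} and \eqref{e:hybrid_vhat} to a single edge, applying the identity \eqref{e:edge_identity}, and then combining the results via the $L^2(e)$-projection $P_e$ onto $[P_{k-1}(e)]^2$.

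First, I would exploit the fact that $Q_h$ is double-valued on each interior edge: testing \eqref{e:hybrid_q} with $q_h$ supported on a single side of a single edge $e$ yields $P_e(u_h^\pm - \hat{u}_h) = 0$, and averaging gives $P_e(\llbrace u_h \rrbrace - \hat{u}_h) = 0$ on $\mathcal{E}_h^\circ$. The same localization on boundary edges, taking in particular $q_h = \phi n$ with $\phi \in P_{k-1}(e)$, produces $P_e((u_h - \hat{u}_h) \cdot n) = 0$. These are weaker than the desired equalities, but they are the only information that \eqref{e:hybrid_q} contains.

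Next, I would apply the edge identity \eqref{e:edge_identity} to \eqref{e:hybrid_vhat}. Because elements of $\hat{V}_h$ are single-valued across interior edges, $\llbracket \hat{v}_h \rrbracket = 0$ on $\mathcal{E}_h^\circ$; and because $\hat{v}_h \times n = 0$ on $\mathcal{E}_h^\partial$, only the normal component of $\hat{v}_h$ is free on boundary edges. Letting $\hat{v}_h$ range freely over $[P_{2k-1}(e)]^2$ on each interior edge, and over multiples of $n$ in $P_{2k-1}(e)$ on each boundary edge, I obtain $\llbrace \hat{p}_h \rrbrace = 0$ on $\mathcal{E}_h^\circ$ and $\hat{p}_h \cdot n = 0$ on $\mathcal{E}_h^\partial$.

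Finally, I would substitute $\hat{p}_h = p_h + \gamma(u_h - \hat{u}_h)$, which turns the previous step into $\llbrace p _h \rrbrace = -\gamma(\llbrace u_h \rrbrace - \hat{u}_h)$ on interior edges and $p_h \cdot n = -\gamma(u_h - \hat{u}_h) \cdot n$ on boundary edges. Since the left-hand sides already lie in $[P_{k-1}(e)]^2$ and $P_{k-1}(e)$, respectively, applying $P_e$ and invoking the first step gives $\llbrace p_h \rrbrace = 0$ and $p_h \cdot n = 0$; dividing by $\gamma > 0$ then yields the remaining equalities $\llbrace u_h \rrbrace = \hat{u}_h$ and $u_h \cdot n = \hat{u}_h \cdot n$. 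The only real subtlety is the polynomial-degree mismatch between $p_h$ (degree $k-1$) and $u_h,\hat{u}_h$ (degree $2k-1$): this is precisely why the $P_{k-1}$-projection is the right tool, rather than a direct pointwise identification, and it is the only place where one must be careful.
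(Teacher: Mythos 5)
Your proof is correct, but it takes a genuinely different route from the paper's. The paper also begins from the identity \eqref{e:edge_identity}, but instead of letting $\hat{v}_h$ range over the whole test space, it makes the single choice $\hat{v}_h = \llbrace u_h \rrbrace - \hat{u}_h$ on $\mathcal{E}_h^\circ$ and $\hat{v}_h \cdot n = (u_h - \hat{u}_h)\cdot n$ on $\mathcal{E}_h^\partial$ (admissible precisely because $\hat{V}_h$ has degree $2k-1$); the $p_h$-terms then drop out by \eqref{e:hybrid_q}, leaving $\gamma$-weighted squares summing to zero, which forces $\llbrace u_h\rrbrace = \hat{u}_h$ and $(u_h - \hat{u}_h)\cdot n = 0$ first, and then a second test-function choice gives $\llbrace p_h\rrbrace = 0$ and $p_h\cdot n = 0$. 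You instead establish the conclusions in the opposite order: the full test space forces $\llbrace \hat{p}_h\rrbrace = 0$ and $\hat{p}_h\cdot n = 0$ pointwise (valid since these averages lie in $[P_{2k-1}(e)]^2$), and you then disentangle the degree-$(k-1)$ part $p_h$ from the degree-$(2k-1)$ part $\gamma(u_h - \hat{u}_h)$ using the $P_{k-1}(e)$-projection information extracted from \eqref{e:hybrid_q}. Note that the paper's "the terms involving $p_h$ vanish by \eqref{e:hybrid_q}" rests on exactly the same localization you carry out in your first step, so the two arguments consume identical information; yours makes the role of the degree mismatch between $Q_h$ and $\hat{V}_h$ more explicit, while the paper's energy-style argument is slightly shorter and mirrors the positivity reasoning reused later in the well-posedness proofs (\cref{t:well-posed,t:condensed_well-posed}). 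Both hinge on $\gamma > 0$ and on $\hat{V}_h$ matching the degree $2k-1$ of $V_h$.
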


\begin{proof}
  Let $ q _h = \llbrace p _h \rrbrace $ on $ \mathcal{E} _h ^\circ $
  and $ q _h = ( p _h \cdot n ) n $ on $ \mathcal{E} _h ^\partial
  $. Taking this as the test function in \eqref{e:hybrid_q} and
  applying the identity \eqref{e:edge_identity} gives
  \begin{equation}
    \label{e:hybrid_q_edges}
    2 \bigl\langle \llbrace u _h \rrbrace - \hat{u} _h , \llbrace p _h \rrbrace \bigr\rangle _{ \mathcal{E} _h ^\circ } + \bigl\langle ( u _h - \hat{u} _h ) \cdot n , p _h \cdot n \bigr\rangle _{ \mathcal{E} _h ^\partial } = 0 .
  \end{equation}
  There are no interior jump-jump or tangential boundary terms, since
  this choice of $ q _h $ has $ \llbracket q _h \rrbracket = 0 $ on
  $ \mathcal{E} _h ^\circ $ and $ q _h \times n = 0 $ on
  $ \mathcal{E} _h ^\partial $. Similarly,
  $ \llbracket \hat{v} _h \rrbracket = 0 $ on
  $ \mathcal{E} _h ^\circ $ and $ \hat{v} _h \times n = 0 $ on
  $ \mathcal{E} _h ^\partial $ for all $ \hat{v} _h \in \hat{V} _h $,
  so \eqref{e:hybrid_vhat} may be rewritten as
  \begin{equation}
    \label{e:hybrid_vhat_edges}
    2 \bigl\langle \llbrace p _h \rrbrace + \gamma \bigl( \llbrace u _h \rrbrace - \hat{u} _h \bigr) , \hat{v} _h \bigr\rangle _{ \mathcal{E} _h ^\circ } + \bigl\langle  p _h \cdot n + \gamma ( u _h - \hat{u} _h ) \cdot n , \hat{v} _h \cdot n \bigr\rangle _{ \mathcal{E} _h ^\partial } = 0 .
  \end{equation}
  Now, take $ \hat{v} _h = \llbrace u _h \rrbrace - \hat{u} _h $ on
  $ \mathcal{E} _h ^\circ $ and
  $ \hat{v} _h \cdot n = ( u _h - \hat{u} _h ) \cdot n $ on
  $ \mathcal{E} _h ^\partial $. The terms involving $ p _h $ vanish by
  \eqref{e:hybrid_q_edges}, leaving
  \begin{equation*}
    2 \bigl\langle \gamma \bigl( \llbrace u _h \rrbrace - \hat{u} _h \bigr), \llbrace u _h \rrbrace - \hat{u} _h \bigr\rangle _{ \mathcal{E} _h ^\circ } + \bigl\langle \gamma ( u _h - \hat{u} _h ) \cdot n , ( u _h - \hat{u} _h ) \cdot n \bigr\rangle _{ \mathcal{E} _h ^\partial } = 0 .
  \end{equation*}
  Since $ \gamma > 0 $, it follows that
  $ \llbrace u _h \rrbrace = \hat{u} _h $ on $ \mathcal{E} _h ^\circ $
  and $ u _h \cdot n = \hat{u} _h \cdot n $ on
  $ \mathcal{E} _h ^\partial $, as claimed. 
  Finally, substituting these equalities into \eqref{e:hybrid_vhat_edges} gives
  \begin{equation*}
    2 \bigl\langle \llbrace p _h \rrbrace , \hat{v} _h \bigr\rangle _{ \mathcal{E} _h ^\circ } + \langle p _h \cdot n , \hat{v} _h \cdot n \rangle _{ \mathcal{E} _h ^\partial } = 0 ,
  \end{equation*}
  so taking $ \hat{v} _h = \llbrace p _h \rrbrace $ on
  $ \mathcal{E} _h ^\circ $ and $ \hat{v} _h \cdot n = p _h \cdot n $
  on $ \mathcal{E} _h ^\partial $ completes the proof.
\end{proof}

\begin{remark}
  Note that $ \llbrace w \rrbrace _e = 0 $ can be rewritten as
  $ w ^+ = - w ^- $. Since the outer normals satisfy $ n ^+ = - n^- $,
  it follows that $ w ^+ \times n ^+ = w ^- \times n ^- $ and
  $ w ^+ \cdot n ^+ = w ^- \cdot n ^- $, i.e., the tangential and
  normal components of $w$ agree on both sides of $e$. Thus,
  \cref{l:avg} says that the tangential and normal components of
  $ p _h $ and $ u _h - \hat{u} _h $ are single-valued, with normal
  components vanishing on boundary edges. In particular, the same is
  therefore true of $ \hat{p} _h $, as previously remarked in
  \cref{s:description}.
\end{remark}  

Using \cref{l:avg} and the identity \eqref{e:edge_identity}, observe
that the edge terms in \eqref{e:hybrid_v} reduce to
\begin{align*}
  \langle \hat{p} _h , v _h \rangle _{ \partial \mathcal{T} _h }
  &= \frac{1}{2} \bigl\langle \llbracket \hat{p} _h \rrbracket , \llbracket v _h \rrbracket \bigr\rangle _{ \mathcal{E} _h ^\circ } + \langle \hat{p} _h \times n , v _h \times n \rangle _{ \mathcal{E} _h ^\partial } \\
  &= \frac{1}{2} \bigl\langle \llbracket p _h + \gamma u _h \rrbracket , \llbracket v _h \rrbracket \bigr\rangle _{ \mathcal{E} _h ^\circ } + \bigl\langle ( p _h + \gamma u _h ) \times n , v _h \times n \bigr\rangle _{ \mathcal{E} _h ^\partial } ,
\end{align*}
for all $ v _h \in V _h $.  Similarly, the edge terms in
\eqref{e:hybrid_q} reduce to
\begin{equation*}
  \langle u _h - \hat{u} _h , q _h \rangle _{ \partial \mathcal{T} _h } = \frac{1}{2} \bigl\langle \llbracket u _h \rrbracket , \llbracket q _h \rrbracket \bigr\rangle _{ \mathcal{E} _h ^\circ } + \langle u _h \times n , q _h \times n \rangle _{ \mathcal{E} _h ^\partial } ,
\end{equation*}
for all $ q _h \in Q _h $. This allows us to eliminate $ \hat{u} _h $
and the equation \eqref{e:hybrid_vhat} from the variational problem.
A two-field reduced formulation is defined as follows. Let
\begin{equation*}
  \mathring{ Q } _h \coloneqq \bigl\{ q _h \in Q _h : \llbrace q _h \rrbrace = 0 \text{ on } \mathcal{E} _h ^\circ \text{ and } q _h \cdot n = 0 \text{ on } \mathcal{E} _h ^\partial \bigr\} ,
\end{equation*}
and define the bilinear forms
$ a _h \colon V _h \times V _h \rightarrow \mathbb{R} $ and
$ b _h \colon V _h \times \mathring{ Q } _h \rightarrow \mathbb{R} $
by
\begin{align*}
  a _h ( u _h , v _h )
  &\coloneqq \begin{multlined}[t]
    ( \nabla \cdot u _h , \nabla \cdot v _h ) _{ \mathcal{T} _h } + ( \nabla \times u _h , \nabla \times v _h ) _{ \mathcal{T} _h } + \alpha ( u _h , v _h ) _{ \mathcal{T} _h } \\
    + \frac{1}{2} \bigl\langle \gamma \llbracket u _h \rrbracket , \llbracket v _h \rrbracket \bigr\rangle _{ \mathcal{E} _h ^\circ } + \langle \gamma u _h \times n , v _h \times n \rangle _{ \mathcal{E} _h ^\partial },
  \end{multlined}
  \\
  b _h ( v _h , q _h )
  &\coloneqq \frac{1}{2} \bigl\langle \llbracket v _h \rrbracket , \llbracket q _h \rrbracket \bigr\rangle _{ \mathcal{E} _h ^\circ } + \langle v _h \times n , q _h \times n \rangle _{ \mathcal{E} _h ^\partial } .
\end{align*}
We then consider the problem: Find
$ ( u _h , p _h ) \in V _h \times \mathring{ Q } _h $ such that, for
all $ ( v _h , q _h ) \in V _h \times \mathring{ Q } _h $,
\begin{subequations}
  \label{e:saddle}
  \begin{align}
    a _h ( u _h , v _h ) + b _h ( v _h, p _h ) &= (f, v _h ) _{ \mathcal{T} _h }, \label{e:saddle_v}\\
    b _h ( u _h , q _h ) &= 0 . \label{e:saddle_q}
  \end{align}
\end{subequations}
This resembles a standard two-field hybrid method in saddle-point
form, where $ \mathring{ Q } _h $ is the space of Lagrange
multipliers. Compare with the nonconforming hybrid method of
\citet{RaTh1977} for the scalar Poisson equation.

Finally, we may reduce even further to a one-field formulation on
\begin{equation*}
  \mathring{ V } _h \coloneqq \bigl\{ v _h \in V _h : b _h ( v _h , q _h ) = 0 \text{ for all } q _h \in \mathring{ Q } _h \bigr\} ,
\end{equation*}
consisting of degree-$ ( 2 k -1 ) $ vector fields whose degree
$ \leq k -1 $ moments are continuous on $ \mathcal{E} _h ^\circ $ and
have vanishing tangential component on $ \mathcal{E} _h ^\partial $.
We then consider the problem: Find $ u _h \in \mathring{ V } _h $ such
that, for all $ v _h \in \mathring{ V } _h $,
\begin{equation}
  \label{e:one-field}
  a _h ( u _h , v _h ) = ( f , v _h ) _{ \mathcal{T} _h } .
\end{equation}
This is precisely \eqref{e:brenner}, modulo a constant factor of
$ \frac{1}{2} $ for the penalty on interior edges, and the
lowest-order case $ k = 1 $ recovers the method of
\citet{BrCuLiSu2008}.

We have thus shown the equivalence of the three-field, two-field, and
one-field formulations, which we now state as a lemma.

\begin{lemma}
  \label{l:reduced}
  The following are equivalent:
  \begin{enumerate}[label=(\roman*)]
  \item $ ( u _h , p _h , \hat{u} _h ) \in V _h \times Q _h \times \hat{V} _h $ is a solution to \eqref{e:hybrid};

  \item $ ( u _h , p _h ) \in V _h \times \mathring{ Q } _h $ is a
    solution to \eqref{e:saddle},
    $ \hat{u} _h = \llbrace u _h \rrbrace $ on
    $ \mathcal{E} _h ^\circ $, and
    $ \hat{u} _h \cdot n = u _h \cdot n $ on
    $ \mathcal{E} _h ^\partial $; \label{i:two-field}

  \item $ u _h \in \mathring{ V } _h $ is a solution to
    \eqref{e:one-field}, $ p _h $ satisfies \eqref{e:saddle_v} for all
    $ v _h \in V _h $, and $ \hat{u} _h $ is as in \ref{i:two-field}.
  \end{enumerate} 
\end{lemma}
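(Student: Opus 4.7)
The plan is to prove the three statements pairwise equivalent, exploiting the fact that most of the bookkeeping has already been done in \cref{l:avg} and in the edge-term reductions that precede the statement. I would organize the argument as (i)$\Leftrightarrow$(ii) followed by (ii)$\Leftrightarrow$(iii).

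For (i)$\Rightarrow$(ii), \cref{l:avg} already yields $p_h \in \mathring{Q}_h$ together with $\hat{u}_h = \llbrace u_h \rrbrace$ on $\mathcal{E}_h^\circ$ and $\hat{u}_h \cdot n = u_h \cdot n$ on $\mathcal{E}_h^\partial$. The reduction of $\langle \hat{p}_h, v_h \rangle_{\partial \mathcal{T}_h}$ computed just before the lemma then rewrites \eqref{e:hybrid_v} as \eqref{e:saddle_v}, while the analogous reduction of $\langle u_h - \hat{u}_h, q_h \rangle_{\partial \mathcal{T}_h}$, specialized to $q_h \in \mathring{Q}_h \subset Q_h$, yields \eqref{e:saddle_q}.

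For the reverse (ii)$\Rightarrow$(i), I would define $\hat{u}_h \in \hat{V}_h$ by the formulas in \ref{i:two-field} and then verify \eqref{e:hybrid_v}, \eqref{e:hybrid_q}, and \eqref{e:hybrid_vhat} in turn. Equation \eqref{e:hybrid_v} is recovered by running the edge-term identity in reverse. For \eqref{e:hybrid_vhat}, expand via \eqref{e:edge_identity}: since $p_h \in \mathring{Q}_h$ and $\hat{u}_h$ is constructed so that $\llbrace u_h \rrbrace - \hat{u}_h = 0$ on $\mathcal{E}_h^\circ$ and $(u_h - \hat{u}_h)\cdot n = 0$ on $\mathcal{E}_h^\partial$, both $\llbrace \hat{p}_h \rrbrace$ and $\hat{p}_h \cdot n \rvert_{\mathcal{E}_h^\partial}$ vanish, killing the pairing against any $\hat{v}_h \in \hat{V}_h$. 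The mildly subtle step is verifying \eqref{e:hybrid_q} on all of $Q_h$, whereas \eqref{e:saddle_q} only supplies it on $\mathring{Q}_h$; for this I would decompose $Q_h = \mathring{Q}_h \oplus Q_h^\perp$, where $Q_h^\perp$ consists of test functions that are single-valued on $\mathcal{E}_h^\circ$ and purely normal on $\mathcal{E}_h^\partial$. On $Q_h^\perp$ both $\llbracket q_h \rrbracket$ and $q_h \times n \rvert_{\mathcal{E}_h^\partial}$ vanish, so the edge-term reduction shows $\langle u_h - \hat{u}_h, q_h \rangle_{\partial \mathcal{T}_h} = 0$ identically there, making \eqref{e:hybrid_q} and \eqref{e:saddle_q} equivalent.

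Finally, (ii)$\Leftrightarrow$(iii) is essentially by the definition of $\mathring{V}_h$: \eqref{e:saddle_q} says exactly that $u_h \in \mathring{V}_h$, and restricting \eqref{e:saddle_v} to $v_h \in \mathring{V}_h$ kills $b_h(v_h, p_h)$ and produces \eqref{e:one-field}; conversely, the $p_h$ appearing in (iii) is by hypothesis taken to satisfy \eqref{e:saddle_v} on all of $V_h$. The only nontrivial obstacle is the $Q_h = \mathring{Q}_h \oplus Q_h^\perp$ decomposition used to pass \eqref{e:hybrid_q} across the reduction; everything else is direct application of \eqref{e:edge_identity} together with the defining conditions of $\hat{V}_h$ and $\mathring{Q}_h$.
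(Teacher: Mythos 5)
Your proposal is correct and follows essentially the same route as the paper, which establishes the lemma through the discussion preceding its statement: \cref{l:avg} together with the edge identity \eqref{e:edge_identity} reduces the three-field problem to the two-field one, and the definition of $\mathring{V}_h$ handles the passage to the one-field problem. Your explicit treatment of the reverse implication (ii)$\Rightarrow$(i) --- in particular the decomposition $Q_h = \mathring{Q}_h \oplus Q_h^\perp$ showing that \eqref{e:saddle_q} on $\mathring{Q}_h$ suffices to recover \eqref{e:hybrid_q} on all of $Q_h$ --- fills in a step the paper leaves implicit, and it is sound.
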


\subsection{Existence/uniqueness and static condensation}
\label{s:wp_sc}

The problem \eqref{e:primal} is well-posed if and only if $\alpha$ is
not an eigenvalue of the vector Laplacian on
$ H ( \operatorname{div}; \Omega ) \cap \mathring{ H } (
\operatorname{curl} ; \Omega ) $. In particular, the bilinear form
\begin{equation*}
  a ( u , v ) \coloneqq ( \nabla \cdot u , \nabla \cdot v ) _\Omega + ( \nabla \times u , \nabla \times v ) _\Omega + \alpha ( u , v ) _\Omega 
\end{equation*}
on
$ H ( \operatorname{div}; \Omega ) \cap \mathring{ H } (
\operatorname{curl} ; \Omega ) $ is coercive if $ \alpha > 0 $, and if
the complement of $\Omega$ is connected (e.g., $\Omega$ is simply
connected), then it is also coercive for $ \alpha = 0 $ by
Friedrichs's inequality (cf.~\citet{Monk2003}).

We are now ready to prove our main result on existence and uniqueness
for the hybrid method.

\begin{theorem}
  \label{t:well-posed}
  For the problems \eqref{e:hybrid}, \eqref{e:saddle}, and
  \eqref{e:one-field}, existence and uniqueness of solutions
  holds---or fails to hold---simultaneously for all three. In
  particular, all three are uniquely solvable if $ \alpha > 0 $, and
  if the complement of $\Omega$ is connected, then this is also true
  for $ \alpha = 0 $.
\end{theorem}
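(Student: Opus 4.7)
The plan is to bootstrap from \cref{l:reduced}: the three formulations are each square finite-dimensional linear systems, so well-posedness of each is equivalent to injectivity, and the bijective correspondence between their solution tuples immediately gives the ``simultaneously well-posed or ill-posed'' claim. It therefore suffices to establish uniqueness for the one-field problem \eqref{e:one-field}, which is the most compact of the three to analyze.

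I would suppose $u_h \in \mathring{V}_h$ satisfies $a_h(u_h, v_h) = 0$ for all $v_h \in \mathring{V}_h$ and test with $v_h = u_h$, yielding
\begin{equation*}
  \| \nabla \cdot u_h \|^2 _{\mathcal{T}_h} + \| \nabla \times u_h \|^2 _{\mathcal{T}_h} + \alpha \| u_h \|^2 _{\mathcal{T}_h} + \tfrac{1}{2} \bigl\langle \gamma \llbracket u_h \rrbracket, \llbracket u_h \rrbracket \bigr\rangle _{\mathcal{E}_h^\circ} + \langle \gamma\, u_h \times n, u_h \times n \rangle _{\mathcal{E}_h^\partial} = 0.
\end{equation*}
If $\alpha > 0$, the third term alone forces $u_h = 0$. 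If $\alpha = 0$, each nonnegative term must individually vanish. Because $\gamma_e > 0$ pointwise (edge midpoints avoid the corners, so $\Phi_\mu(e) > 0$ and $\mu_\ell \leq 1$ keeps the weight finite), the full matrix-valued jump $\llbracket u_h \rrbracket$ vanishes identically on every interior edge, and $u_h \times n = 0$ pointwise on $\partial \Omega$.

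The next step is to lift this elementwise information to a global statement. Vanishing of the full jump gives both tangential and normal continuity of $u_h$ across interior edges, so $u_h \in [H^1(\Omega)]^2 \subset H(\operatorname{div};\Omega) \cap \mathring{H}(\operatorname{curl};\Omega)$. The elementwise identities $\nabla \cdot u_h = 0$ and $\nabla \times u_h = 0$ then promote to global identities, so $u_h$ lies in the kernel of the continuous form $a$ with $\alpha = 0$. Under the hypothesis that the complement of $\Omega$ is connected, Friedrichs's inequality (as recalled before the theorem) makes $a$ coercive on this space, forcing $u_h = 0$.

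The main obstacle I anticipate is ensuring that $a_h(u_h, u_h) = 0$ really delivers the \emph{strong} jump conclusion $\llbracket u_h \rrbracket = 0$, rather than only the weak, low-order moment condition built into $\mathring{V}_h$. This hinges on $\gamma$ being strictly positive at every point of every edge, which follows from the definition of $\Phi_\mu$ together with the constraint $\mu_\ell \leq 1$ at each corner; once granted, the conforming/Friedrichs argument proceeds as in the continuous case and the theorem follows.
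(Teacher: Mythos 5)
Your treatment of the one-field problem is essentially the paper's: test with $v_h = u_h$, use $\alpha>0$ or, for $\alpha=0$, note that $\gamma_e>0$ forces the full jump to vanish edgewise (it is a polynomial with zero weighted $L^2$ norm), place $u_h$ in $H(\operatorname{div};\Omega)\cap\mathring{H}(\operatorname{curl};\Omega)$, and invoke Friedrichs. That part is fine. The gap is in your first step: the claim that \cref{l:reduced} ``immediately'' gives simultaneous well-posedness of all three formulations. \Cref{l:reduced} is a correspondence between solution tuples, but it is not a bijection unless the multiplier $p_h$ is uniquely determined by $u_h$, and nothing in the lemma guarantees that. Concretely, suppose the one-field problem is well-posed but there exists a nonzero $p_h \in \mathring{Q}_h$ with $b_h(v_h,p_h)=0$ for all $v_h \in V_h$. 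Then $(0,p_h)$ is a nonzero solution of the homogeneous saddle-point problem \eqref{e:saddle}, and correspondingly \eqref{e:hybrid} is singular, even though \eqref{e:one-field} is uniquely solvable. (Existence of some $p_h$ is automatic in finite dimensions, since the functional $v_h \mapsto (f,v_h)_{\mathcal{T}_h} - a_h(u_h,v_h)$ annihilates $\mathring{V}_h = \ker B_h$ and hence lies in the range of the transpose; it is uniqueness that can fail.)

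What is missing is therefore the inf-sup/surjectivity condition: injectivity of $p_h \mapsto b_h(\cdot,p_h)$, equivalently surjectivity of $B_h \colon V_h \to Q_h^\ast$. This is where the paper uses the BSM element in an essential way: given $q_h \in Q_h$, the degrees of freedom of $\textit{BSM}_k(K) \subset \bigl[P_{2k-1}(K)\bigr]^2$ produce $v_h \in V_h$ whose degree $\leq k-1$ edge moments equal $q_h$, so that $\langle v_h, q_h\rangle_{\partial\mathcal{T}_h} = \lVert q_h\rVert_{\partial\mathcal{T}_h}^2 > 0$ for $q_h \neq 0$. Without this (or some substitute showing the edge-moment map from $V_h$ onto $Q_h^\ast$ is surjective, which depends on the specific degree pairing $2k-1$ versus $k-1$), the ``simultaneously well-posed or ill-posed'' assertion is not established. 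You should add this argument, after which the saddle-point theory (isomorphism on the kernel $\mathring{V}_h$ plus surjectivity) reduces everything to the one-field problem exactly as you intend.
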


\begin{proof}
  By \cref{l:reduced}, unique solvability of \eqref{e:hybrid} is
  equivalent to that of \eqref{e:saddle}, since $ \hat{u} _h $ is
  uniquely determined by $ u _h $, so it suffices to show equivalence
  of \eqref{e:saddle} and \eqref{e:one-field}.  Using classic
  saddle-point theory (cf.~\citet[Theorem 3.2.1]{BoBrFo2013}),
  \eqref{e:saddle} is uniquely solvable if and only if
  $ v _h \mapsto b _h ( v _h , \cdot ) $ is surjective and the
  restriction of $ a _h ( \cdot , \cdot ) $ to its kernel is an
  isomorphism. The isomorphism-on-the-kernel condition is precisely
  the unique solvability of \eqref{e:one-field}, so it remains to show
  that the surjectivity condition holds.

  In fact, we will show something slightly stronger, which is
  surjectivity of the map $ B _h \colon V _h \rightarrow Q _h ^\ast $,
  $ \langle B _h v _h , q _h \rangle _{ Q _h ^\ast \times Q _h }
  \coloneqq \langle v _h , q _h \rangle _{ \partial \mathcal{T} _h }
  $, which agrees with $ b _h ( v _h , q _h ) $ when
  $ q _h \in \mathring{ Q } _h $ by \eqref{e:edge_identity}. Given
  $ q _h \in Q _h $, on each $ K \in \mathcal{T} _h $ there exists
  $ v _h \rvert _K \in \textit{BSM} _k (K) $ whose degree
  $ \leq k -1 $ moments agree with $ q _h $ on $ \partial K
  $. Combining these into $ v _h \in V _h $, it follows that
  $ \langle v _h , q _h \rangle _{ \partial \mathcal{T} _h } = \langle
  q _h , q _h \rangle _{ \partial \mathcal{T} _h } $, which is
  strictly positive whenever $ q _h $ is nonzero. Hence, the transpose
  of $ B _h $ is injective, so $ B _h $ is surjective.

  Finally, if $ \alpha > 0 $, then $ a _h ( \cdot , \cdot ) $ is
  clearly positive-definite, so \eqref{e:one-field} is uniquely
  solvable. If $ \alpha = 0 $, then $ a _h ( \cdot , \cdot ) $ is
  generally only positive-semidefinite. However, observe that
  $ a _h ( u _h , u _h ) = 0 $ implies
  $ \llbracket u _h \rrbracket = 0 $ on $ \mathcal{E} _h ^\circ $ and
  $ u _h \times n = 0 $ on $ \mathcal{E} _h ^\partial $, so
  $ u _h \in H ( \operatorname{div} ; \Omega ) \cap \mathring{ H } (
  \operatorname{curl} ; \Omega ) $ with $ a ( u _h , u _h ) = 0 $. If
  the complement of $\Omega$ is connected, then
  $ a ( \cdot , \cdot ) $ is coercive, so $ u _h = 0 $, and thus
  $ a _h ( \cdot , \cdot ) $ is positive-definite.
\end{proof}

Next, we discuss the static condensation of the hybrid method, which
eliminates the spaces $ V _h $ and $ Q _h $ from \eqref{e:hybrid} to
obtain a smaller global variational problem on $ \hat{V} _h $
alone. We take a similar approach to that used for HDG methods in
\citet*{CoGoLa2009}. Observe that, given $ \hat{u} _h $ and $f$,
\eqref{e:hybrid_v}--\eqref{e:hybrid_q} state that
$ ( u _h , p _h ) \in V _h \times Q _h $ solves the local problems
\begin{align*}
  ( \nabla \cdot u _h , \nabla \cdot v _h ) _K + ( \nabla \times u _h , \nabla \times v _h ) _K + \alpha ( u _h , v _h ) _K + \langle p _h + \gamma u _h , v _h \rangle _{ \partial K } &= ( f, v _h ) _K + \langle \gamma \hat{u} _h , v _h \rangle _{ \partial K } ,\\
  \langle u _h , q _h \rangle _{ \partial K } &= \langle \hat{u} _h , q _h \rangle _{ \partial K } ,
\end{align*}
for all $ K \in \mathcal{T} _h $ and
$ ( v _h , q _h ) \in V _h \times Q _h $. On each
$ K \in \mathcal{T} _h $, define the local bilinear forms
\begin{align*}
  a _K ( u _h , v _h ) &\coloneqq ( \nabla \cdot u _h , \nabla \cdot v _h ) _K + ( \nabla \times u _h , \nabla \times v _h ) _K + \alpha ( u _h , v _h ) _K + \langle \gamma u _h , v _h \rangle _{ \partial K } ,\\
  b _K ( v _h , q _h ) &\coloneqq \langle q _h , v _h \rangle _{ \partial K } .
\end{align*}
To separate the influence of $ \hat{u} _h $ and $f$, we define two
local solvers: Find
$ ( \mathsf{U} \hat{u} _h , \mathsf{P} \hat{u} _h ) \in V _h \times Q
_h $ and $ ( \mathsf{U} f , \mathsf{P} f ) \in V _h \times Q _h $ such
that
\begin{subequations}
  \label{e:local_solvers}
  \begin{align}
    a _K ( \mathsf{U} \hat{u} _h , v _h ) + b _K ( v _h , \mathsf{P} \hat{u} _h ) &= \langle \gamma \hat{u} _h , v _h \rangle _{ \partial K } , & a _K ( \mathsf{U} f, v _h ) + b _K ( v _h , \mathsf{P} f ) &= ( f, v _h ) _K , \label{e:local_solvers_v}\\
    b _K ( \mathsf{U} \hat{u} _h , q _h ) &= \langle \hat{u} _h , q _h \rangle _{ \partial K } , &   b _K ( \mathsf{U} f , q _h ) &= 0 \label{e:local_solvers_q},
  \end{align}
\end{subequations}
for all $ K \in \mathcal{T} _h $ and
$ ( v _h , q _h ) \in V _h \times Q _h $.

\begin{lemma}
  \label{l:locally_well-posed}
  If $ \alpha \geq 0 $, then the local solvers are well-defined, i.e.,
  \eqref{e:local_solvers} is uniquely solvable.
\end{lemma}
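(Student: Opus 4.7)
My plan is to verify the two hypotheses of Brezzi's theorem on the finite-dimensional pair $V_h|_K \times Q_h|_K$: coercivity of $a_K$ on $\ker b_K$, and the surjectivity (inf-sup) condition for $b_K$. Since the two right-hand side systems in \eqref{e:local_solvers} have identical bilinear-form structure, it suffices to establish these once.

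For the coercivity step, I would in fact show the stronger statement that $a_K(\cdot,\cdot)$ is positive-definite on all of $V_h|_K = [P_{2k-1}(K)]^2$. Since $\alpha \geq 0$ and $\gamma_e > 0$ for every $e \in \mathcal{E}_h$, all four terms contributing to $a_K(v_h,v_h)$ are nonnegative. If $a_K(v_h, v_h) = 0$, then each must vanish, so $\nabla \cdot v_h = 0$ and $\nabla \times v_h = 0$ on $K$, and $v_h = 0$ on $\partial K$. In particular $v_h \in [H^1_0(K)]^2$, and the standard integration-by-parts identity $\|\nabla v_h\|_{L^2(K)}^2 = \|\nabla \cdot v_h\|_{L^2(K)}^2 + \|\nabla \times v_h\|_{L^2(K)}^2$ for vector fields vanishing on $\partial K$ forces $v_h$ to be constant and hence zero.

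For the inf-sup condition, I would reuse the BSM argument already employed in the proof of \cref{t:well-posed}, but now at the element level. Given any nonzero $q_h \in Q_h|_K = \prod_{e \subset \partial K} [P_{k-1}(e)]^2$, BSM unisolvence produces $v_h \in \textit{BSM}_k(K) \subset V_h|_K$ whose degree-$(k-1)$ boundary moments agree with $q_h$ and whose interior moments vanish. Then $b_K(v_h, q_h) = \langle q_h, q_h \rangle_{\partial K} = \|q_h\|_{\partial K}^2 > 0$, so the transpose of $v_h \mapsto b_K(v_h,\cdot)$ is injective, and in this finite-dimensional setting this is equivalent to surjectivity of $b_K(\cdot, \cdot)$ in its second argument. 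Combined with the (unconditional) coercivity of $a_K$ on $V_h|_K$, this gives well-posedness of the saddle-point system for each of the two right-hand sides.

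There is no real obstacle here: both ingredients are essentially local versions of arguments already in the paper. The only subtlety worth flagging is that the coercivity of $a_K$ is \emph{unconditional}, not merely coercivity on the kernel of $b_K$; this is convenient because it lets us invoke the simplest form of the saddle-point theorem without having to characterize $\ker b_K$ explicitly.
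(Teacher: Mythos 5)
Your proof is correct and follows essentially the same route as the paper: coercivity of $a_K$ (with the boundary penalty forcing $v_h|_{\partial K}=0$ when $\alpha=0$) plus surjectivity of $b_K$ via BSM unisolvence, exactly as in the proof of \cref{t:well-posed}. The only cosmetic difference is that the paper concludes $v_h=0$ from $\nabla\cdot v_h=\nabla\times v_h=0$ and $v_h|_{\partial K}=0$ by citing Friedrichs's inequality, whereas you prove that special case directly via the $H^1_0$ integration-by-parts identity.
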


\begin{proof}
  First, we show that
  $ \sum _{ K \in \mathcal{T} _h } a _K ( \cdot , \cdot ) $ is
  coercive. This is obvious when $ \alpha > 0 $; when $ \alpha = 0 $,
  $ a _K ( u _h , u _h ) = 0 $ implies that
  $ u \rvert _{ \partial K } = 0 $, so Friedrichs's inequality implies
  $ u \rvert _K = 0 $. Finally, the surjectivity of
  $ v _h \mapsto \sum _{ K \in \mathcal{T} _h } b _K ( v _h , \cdot )
  = B _h v _h $ has already been shown in the proof of
  \cref{t:well-posed}.
\end{proof}

Assuming the local solvers are well-defined---which always holds for
$ \alpha \geq 0 $, by \cref{l:locally_well-posed}---we now define
$ \hat{\mathsf{P}} \hat{u} _h \coloneqq \mathsf{P} \hat{u} _h + \gamma
( \mathsf{U} \hat{u} _h - \hat{u} _h ) $ and
$ \hat{\mathsf{P}} f \coloneqq \mathsf{P} f + \gamma \mathsf{U} f $.
Substituting into \eqref{e:hybrid_vhat} and rearranging gives the
condensed problem: Find $ \hat{u} _h \in \hat{V} _h $ such that, for
all $ \hat{v} _h \in \hat{V} _h $,
\begin{equation}
  \label{e:condensed}
  - \langle \hat{\mathsf{P}} \hat{u} _h, \hat{v} _h \rangle _{ \partial \mathcal{T} _h } = \langle \hat{\mathsf{P}} f , \hat{v} _h \rangle _{ \partial \mathcal{T} _h } .
\end{equation}
Since the local solvers may be computed element-by-element, in
parallel if desired, the condensation from \eqref{e:hybrid} to
\eqref{e:condensed} is efficient to implement. The condensed bilinear
form
$ \hat{a} _h ( \hat{u} _h , \hat{v} _h ) \coloneqq - \langle
\hat{\mathsf{P}} \hat{u} _h, \hat{v} _h \rangle _{ \partial
  \mathcal{T} _h } $ on the left-hand side of \eqref{e:condensed} has
the following useful symmetric expression.

\begin{lemma}
  \label{l:condensed_bilinear}
  For all $ \hat{u} _h , \hat{v} _h \in \hat{V} _h $,
  \begin{multline*}
    \hat{a} _h ( \hat{u} _h , \hat{v} _h ) = ( \nabla \cdot \mathsf{U} \hat{u} _h , \nabla \cdot \mathsf{U} \hat{v} _h ) _{ \mathcal{T} _h } + ( \nabla \times \mathsf{U} \hat{u} _h , \nabla \times \mathsf{U} \hat{v} _h ) _{ \mathcal{T} _h } + \alpha ( \mathsf{U} \hat{u} _h , \mathsf{U} \hat{v} _h ) _{ \mathcal{T} _h } \\
    + \bigl\langle \gamma ( \mathsf{U} \hat{u} _h - \hat{u} _h ), \mathsf{U} \hat{v} _h - \hat{v} _h \bigr\rangle _{ \partial \mathcal{T} _h } .
  \end{multline*}
\end{lemma}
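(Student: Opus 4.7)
The plan is to start from the definition $\hat{a}_h(\hat{u}_h,\hat{v}_h) = -\langle \hat{\mathsf{P}}\hat{u}_h,\hat{v}_h\rangle_{\partial \mathcal{T}_h}$, split it using $\hat{\mathsf{P}}\hat{u}_h = \mathsf{P}\hat{u}_h + \gamma(\mathsf{U}\hat{u}_h - \hat{u}_h)$, and then use the local solver equations \eqref{e:local_solvers} to rewrite the Lagrange-multiplier piece $-\langle \mathsf{P}\hat{u}_h,\hat{v}_h\rangle_{\partial \mathcal{T}_h}$ in terms of the volumetric bilinear forms.

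Concretely, I would test the first equation of \eqref{e:local_solvers_v} (for the datum $\hat{u}_h$) against $v_h = \mathsf{U}\hat{v}_h$ and sum over $K\in \mathcal{T}_h$, obtaining
\begin{equation*}
\sum_{K} a_K(\mathsf{U}\hat{u}_h,\mathsf{U}\hat{v}_h) + \sum_{K} b_K(\mathsf{U}\hat{v}_h,\mathsf{P}\hat{u}_h) = \langle \gamma \hat{u}_h, \mathsf{U}\hat{v}_h\rangle_{\partial \mathcal{T}_h}.
\end{equation*}
The key symmetrization step is to apply \eqref{e:local_solvers_q} for the datum $\hat{v}_h$ with test function $q_h = \mathsf{P}\hat{u}_h$, which gives $b_K(\mathsf{U}\hat{v}_h,\mathsf{P}\hat{u}_h) = \langle \mathsf{P}\hat{u}_h,\hat{v}_h\rangle_{\partial K}$. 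This lets me eliminate the multiplier from a boundary pairing in favor of a volumetric expression, yielding
\begin{equation*}
-\langle \mathsf{P}\hat{u}_h,\hat{v}_h\rangle_{\partial \mathcal{T}_h} = \sum_{K} a_K(\mathsf{U}\hat{u}_h,\mathsf{U}\hat{v}_h) - \langle \gamma \hat{u}_h, \mathsf{U}\hat{v}_h\rangle_{\partial \mathcal{T}_h}.
\end{equation*}

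Plugging this back into $\hat{a}_h(\hat{u}_h,\hat{v}_h) = -\langle \mathsf{P}\hat{u}_h,\hat{v}_h\rangle_{\partial \mathcal{T}_h} - \langle \gamma(\mathsf{U}\hat{u}_h - \hat{u}_h),\hat{v}_h\rangle_{\partial \mathcal{T}_h}$ and expanding $\sum_K a_K$ by its definition produces the three volumetric terms $(\nabla\cdot \mathsf{U}\hat{u}_h,\nabla\cdot \mathsf{U}\hat{v}_h)_{\mathcal{T}_h} + (\nabla\times \mathsf{U}\hat{u}_h,\nabla\times \mathsf{U}\hat{v}_h)_{\mathcal{T}_h} + \alpha(\mathsf{U}\hat{u}_h,\mathsf{U}\hat{v}_h)_{\mathcal{T}_h}$, along with a $\gamma$-penalty contribution $\langle \gamma \mathsf{U}\hat{u}_h,\mathsf{U}\hat{v}_h\rangle_{\partial \mathcal{T}_h} - \langle \gamma \hat{u}_h,\mathsf{U}\hat{v}_h\rangle_{\partial \mathcal{T}_h} - \langle \gamma(\mathsf{U}\hat{u}_h - \hat{u}_h),\hat{v}_h\rangle_{\partial \mathcal{T}_h}$. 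Grouping the first two terms as $\langle \gamma(\mathsf{U}\hat{u}_h-\hat{u}_h),\mathsf{U}\hat{v}_h\rangle_{\partial \mathcal{T}_h}$ and subtracting the third collapses the penalty contribution to the manifestly symmetric form $\langle \gamma(\mathsf{U}\hat{u}_h - \hat{u}_h),\mathsf{U}\hat{v}_h - \hat{v}_h\rangle_{\partial \mathcal{T}_h}$, which is exactly the claimed identity.

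There is no genuine obstacle here; the proof is really just bookkeeping. The one substantive idea, and the place where one has to be careful, is the transposition $b_K(\mathsf{U}\hat{v}_h,\mathsf{P}\hat{u}_h) = \langle \mathsf{P}\hat{u}_h,\hat{v}_h\rangle_{\partial K}$, which relies on using \eqref{e:local_solvers_q} for the \emph{other} datum $\hat{v}_h$—this is what forces the volumetric $a_K$-part of the expression to become symmetric in $\mathsf{U}\hat{u}_h$ and $\mathsf{U}\hat{v}_h$, and likewise allows the $\gamma$-terms to telescope into their symmetric form.
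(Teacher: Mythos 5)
Your proposal is correct and uses exactly the same two ingredients as the paper's proof: testing \eqref{e:local_solvers_v} for the datum $\hat{u}_h$ against $v_h = \mathsf{U}\hat{v}_h$, and invoking \eqref{e:local_solvers_q} for the datum $\hat{v}_h$ with $q_h = \mathsf{P}\hat{u}_h$ (equivalently, $\langle \mathsf{P}\hat{u}_h , \mathsf{U}\hat{v}_h - \hat{v}_h \rangle _{\partial \mathcal{T}_h} = 0$). The only difference is bookkeeping order---the paper first splits $\hat{v}_h = \mathsf{U}\hat{v}_h - (\mathsf{U}\hat{v}_h - \hat{v}_h)$ while you first split $\hat{\mathsf{P}}\hat{u}_h = \mathsf{P}\hat{u}_h + \gamma(\mathsf{U}\hat{u}_h - \hat{u}_h)$---so this is essentially the same argument.
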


\begin{proof}
  We begin by writing
  \begin{equation*}
    - \langle \hat{\mathsf{P}} \hat{u} _h, \hat{v} _h \rangle _{ \partial \mathcal{T} _h } = - \langle \hat{\mathsf{P}} \hat{u} _h, \mathsf{U} \hat{v} _h \rangle _{ \partial \mathcal{T} _h } + \langle \hat{\mathsf{P}} \hat{u} _h, \mathsf{U} \hat{v} _h - \hat{v} _h \rangle _{ \partial \mathcal{T} _h } .
  \end{equation*}
  For the first term, \eqref{e:local_solvers_v} with
  $ v _h = \mathsf{U} \hat{v} _h $ implies
  \begin{equation*}
    - \langle \hat{\mathsf{P}} \hat{u} _h, \mathsf{U} \hat{v} _h \rangle _{ \partial \mathcal{T} _h } = ( \nabla \cdot \mathsf{U} \hat{u} _h , \nabla \cdot \mathsf{U} \hat{v} _h ) _{ \mathcal{T} _h } + ( \nabla \times \mathsf{U} \hat{u} _h , \nabla \times \mathsf{U} \hat{v} _h ) _{ \mathcal{T} _h } + \alpha ( \mathsf{U} \hat{u} _h , \mathsf{U} \hat{v} _h ) _{ \mathcal{T} _h } .
  \end{equation*}
  For the second term, \eqref{e:local_solvers_q} implies
  $ \langle \mathsf{P} \hat{u} _h , \mathsf{U} \hat{v} _h - \hat{v} _h
  \rangle _{ \partial \mathcal{T} _h } = 0 $, so
  \begin{equation*}
    \langle \hat{\mathsf{P}} \hat{u} _h, \mathsf{U} \hat{v} _h - \hat{v} _h \rangle _{ \partial \mathcal{T} _h } = \bigl\langle \gamma ( \mathsf{U} \hat{u} _h - \hat{u} _h ), \mathsf{U} \hat{v} _h - \hat{v} _h \bigr\rangle _{ \partial \mathcal{T} _h } ,
  \end{equation*}
  which completes the proof.
\end{proof}

\begin{theorem}
  \label{t:condensed_well-posed}
  Assuming the local solvers are well-defined,
  $ ( u _h , p _h , \hat{u} _h ) \in V _h \times Q _h \times \hat{V}
  _h $ is a solution of \eqref{e:hybrid} if and only if $ \hat{u} _h $
  is a solution of \eqref{e:condensed} with
  $ u _h = \mathsf{U} \hat{u} _h + \mathsf{U} f $ and
  $ p _h = \mathsf{P} \hat{u} _h + \mathsf{P} f $. Consequently,
  \eqref{e:hybrid} is uniquely solvable if and only if
  \eqref{e:condensed} is. In particular,
  $ \hat{a} _h ( \cdot , \cdot ) $ is symmetric positive-definite if
  $ \alpha > 0 $, and if the complement of $\Omega$ is connected, then
  this is also true for $ \alpha = 0 $.
\end{theorem}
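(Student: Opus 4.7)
My plan is to proceed in three stages: first establishing the solution-level biconditional between \eqref{e:hybrid} and \eqref{e:condensed}, then deducing the well-posedness equivalence as an immediate corollary, and finally verifying that $\hat{a}_h$ is symmetric positive-definite under the stated coercivity hypotheses.

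For the first stage, I would exploit the linearity that is built into the local solvers. By construction, summing \eqref{e:local_solvers_v} and \eqref{e:local_solvers_q} over $K \in \mathcal{T}_h$ shows that the pair $u _h = \mathsf{U}\hat{u}_h + \mathsf{U}f$ and $p_h = \mathsf{P}\hat{u}_h + \mathsf{P}f$ automatically satisfies \eqref{e:hybrid_v}--\eqref{e:hybrid_q} for any $\hat{u}_h \in \hat{V}_h$. Conversely, if $(u_h, p_h, \hat{u}_h)$ solves \eqref{e:hybrid}, then \eqref{e:hybrid_v}--\eqref{e:hybrid_q} state precisely that $(u_h, p_h)$ solves, on each $K$, the local problem driving the solvers; assumed local well-posedness forces the decomposition. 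Once this substitution is made, one checks algebraically that $\hat{p}_h = \hat{\mathsf{P}}\hat{u}_h + \hat{\mathsf{P}}f$, so \eqref{e:hybrid_vhat} becomes verbatim \eqref{e:condensed}. The well-posedness equivalence then follows from this bijective correspondence between solutions.

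For the symmetric positive-definite claim, I would invoke the expression from \cref{l:condensed_bilinear}: symmetry is immediate by inspection, and every term is manifestly non-negative when $\alpha \geq 0$ and $\gamma > 0$. Suppose $\hat{a}_h(\hat{u}_h, \hat{u}_h) = 0$. The penalty term forces $\mathsf{U}\hat{u}_h = \hat{u}_h$ on $\partial K$ for every $K \in \mathcal{T}_h$; since $\hat{u}_h$ is single-valued on $\mathcal{E}_h^\circ$ and satisfies $\hat{u}_h \times n = 0$ on $\mathcal{E}_h^\partial$, this makes the jumps of $\mathsf{U}\hat{u}_h$ vanish and kills its tangential boundary trace, placing $\mathsf{U}\hat{u}_h \in H(\operatorname{div};\Omega) \cap \mathring{H}(\operatorname{curl};\Omega)$. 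The vanishing of the remaining terms then gives $a(\mathsf{U}\hat{u}_h, \mathsf{U}\hat{u}_h) = 0$, and the coercivity of $a(\cdot,\cdot)$ recalled at the start of \cref{s:wp_sc} yields $\mathsf{U}\hat{u}_h = 0$. Restricting the edge trace identity to $\partial\mathcal{T}_h$ then gives $\hat{u}_h = 0$.

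The main obstacle I anticipate is the conformity step in the last paragraph---promoting the pointwise-on-edges identification $\mathsf{U}\hat{u}_h = \hat{u}_h$ to actual membership of $\mathsf{U}\hat{u}_h$ in $H(\operatorname{div};\Omega) \cap \mathring{H}(\operatorname{curl};\Omega)$. This mirrors the $\alpha = 0$ argument inside \cref{t:well-posed}, but here the edge values are identified through the external object $\hat{u}_h$ rather than inferred from a jump penalty on $u_h$ alone; after one carefully uses single-valuedness on $\mathcal{E}_h^\circ$ and the tangential boundary condition built into $\hat{V}_h$, the argument closes in the same way, via the global coercivity of $a$.
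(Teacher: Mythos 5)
Your proposal is correct and follows essentially the same route as the paper: the equivalence of \eqref{e:hybrid} and \eqref{e:condensed} via linearity of the local solvers and the identity $\hat{p}_h = \hat{\mathsf{P}}\hat{u}_h + \hat{\mathsf{P}}f$ is exactly the ``discussion above'' that the paper's proof cites, and the positive-definiteness argument---using \cref{l:condensed_bilinear}, then deducing from $\mathsf{U}\hat{u}_h = \hat{u}_h$ on $\partial\mathcal{T}_h$ that $\mathsf{U}\hat{u}_h$ lies in $H(\operatorname{div};\Omega)\cap\mathring{H}(\operatorname{curl};\Omega)$ and appealing to coercivity of $a(\cdot,\cdot)$---is the paper's argument verbatim. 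The conformity step you flag as a potential obstacle is handled exactly as you describe and poses no difficulty.
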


\begin{proof}
  The equivalence of \eqref{e:hybrid} and \eqref{e:condensed} has
  already been demonstrated in the discussion above. When
  $ \alpha \geq 0 $, \cref{l:locally_well-posed} states that the local
  solvers are well-defined, and \cref{l:condensed_bilinear} implies
  that $ \hat{a} _h ( \cdot , \cdot ) $ is symmetric
  positive-semidefinite. Furthermore, if
  $ \hat{a} _h ( \hat{u} _h , \hat{u} _h ) = 0 $, then
  $ \mathsf{U} \hat{u} _h = \hat{u} _h $ on $ \mathcal{E} _h $, so
  $ \mathsf{U} \hat{u} _h \in H ( \operatorname{div} ; \Omega ) \cap
  \mathring{ H } ( \operatorname{curl} ; \Omega ) $ with
  $ a ( \mathsf{U} \hat{u} _h , \mathsf{U} \hat{u} _h ) = 0 $. Hence,
  as in the proof of \cref{t:well-posed},
  $ \hat{a} _h ( \cdot , \cdot ) $ is positive-definite whenever
  $ a ( \cdot , \cdot ) $ is.
\end{proof}

\begin{remark}
  These results tell us that static condensation from \eqref{e:hybrid}
  to \eqref{e:condensed} does not merely reduce the size of the global
  system. It also makes the system more amenable to efficient global
  solvers, such as the conjugate gradient method in the case where
  \eqref{e:condensed} is positive-definite.
\end{remark}

\section{Regularity and error analysis}
\label{s:analysis}

\subsection{Weighted Sobolev spaces and regularity}
\label{s:weighted}

\citet{CoDa2002} characterize the regularity of solutions to Maxwell's
equations in two dimensions (as well as in three) using a family of
weighted Sobolev spaces due to \citet{Kondratiev1967}. We now recall
these spaces and give corresponding regularity results for the problem
\eqref{e:primal}, combining the approach used in \citep{CoDa2002} with
that of \citet[Section 2]{BrCuLiSu2008}. For detailed treatments of
Kondrat'ev spaces and elliptic regularity in domains with corners, we
refer the reader to \citet*{NaPl1994} and \citet*{KoMaRo1997}.

As in \cref{s:description}, let $ r _\ell (x) $ denote the distance
from $ x \in \Omega $ to a corner $ c _\ell $ and
$ r ^\lambda \coloneqq \prod _{ \ell = 1 } ^L r _\ell ^{ \lambda _\ell
} $ for a multi-exponent
$ \lambda = ( \lambda _1, \ldots, \lambda _L ) $. Given a nonnegative
integer $m$, define the weighted Sobolev space
\begin{equation*}
  V ^m _\lambda (\Omega) \coloneqq \bigl\{ \phi \in \mathcal{D} ^\prime (\Omega) : r ^{ \lambda - m + \lvert \beta \rvert } \partial ^\beta \phi \in L ^2 (\Omega) \text{ for all } \lvert \beta \rvert \leq m \bigr\} ,
\end{equation*}
where $\beta$ is a multi-index, equipped with the natural norm defined
by
\begin{equation*}
  \lVert \phi \rVert ^2 _{ m, \lambda } \coloneqq \sum _{ \lvert \beta \rvert \leq m } \lVert r ^{ \lambda - m + \lvert \beta \rvert } \partial ^\beta \phi \rVert ^2 _\Omega .
\end{equation*}
This space also has the following equivalent characterization: If
$ \Omega = \Omega _0 \cup \bigcup _{ \ell = 1 } ^L \Omega _\ell $,
where $ \overline{ \Omega } _0 $ contains none of the corners and
$ \overline{ \Omega } _\ell $ contains only corner $ c_\ell $, then
\begin{equation*}
  V ^m _\lambda (\Omega) = \bigl\{ \phi \in \mathcal{D} ^\prime (\Omega) : \phi \rvert _{ \Omega _0 } \in H ^m (\Omega _0 ) \text{, and } r _\ell ^{ \lambda _\ell - m + \lvert \beta \rvert } \partial ^\beta \phi \rvert _{ \Omega _\ell } \in L ^2 (\Omega _\ell) \text{ for all } \ell \text{ and } \lvert \beta \rvert \leq m \bigr\} ,
\end{equation*}
since $ r _\ell \sim 1 $ on $ \Omega _0 $ for all $ \ell $, and
$ r _{\ell^\prime} \sim 1 $ on $ \Omega _\ell $ for $ \ell ^\prime \neq \ell $.

From the definitions, we immediately obtain the continuous inclusion
\begin{equation*}
  V ^{m + 1} _{\lambda + 1} (\Omega) \subset V ^m _\lambda (\Omega) ,
\end{equation*}
which may be interpolated to obtain fractional-order spaces. That is,
if $ s \geq 0 $, then $ V ^s _\lambda (\Omega) $ may be defined by
complex interpolation between
$ V ^{\lfloor s \rfloor + 1 } _{ \lambda - s + \lfloor s \rfloor + 1 }
(\Omega) $ and
$ V ^{\lfloor s \rfloor} _{ \lambda - s + \lfloor s \rfloor } (\Omega)
$, cf.~\citep[Section 2.4.5]{Schneider2021}. It follows that, more
generally,
\begin{equation*}
  V ^{ s + \epsilon } _{ \lambda + \epsilon } (\Omega) \subset V ^s _\lambda (\Omega) ,
\end{equation*}
for all $ s \geq 0 $ and $ \epsilon > 0 $. Additionally, the
continuous inclusions
$ V ^m _{\lambda ^\prime} (\Omega) \subset V ^m _\lambda (\Omega) $
for $ \lambda ^\prime \leq \lambda $ and
$ V ^m _0 (\Omega) \subset H ^m (\Omega) $ extend in the obvious way
from nonnegative integer $m$ to real $ s \geq 0 $.

\begin{remark}
  \citet{Schneider2021} uses an alternative notation for Kondrat'ev
  spaces,
  \begin{equation*}
    \mathcal{K} _{ p, a } ^m (\Omega) \coloneqq \bigl\{ \phi \in \mathcal{D} ^\prime (\Omega) : r ^{ \lvert \beta \rvert - a } \partial ^\beta \phi \in L ^p (\Omega) \text{ for all } \lvert \beta \rvert \leq m \bigr\} ,
  \end{equation*}
  so
  $ V _\lambda ^m (\Omega) = \mathcal{K} _{ 2, m - \lambda } ^m
  (\Omega) $. For example, the inclusion
  $ \mathcal{K} _{ p, a } ^{m+1} (\Omega) \subset \mathcal{K} _{ p, a
  } ^m (\Omega) $ in the notation of \citep{Schneider2021} gives
  $ V ^{m+1} _{\lambda + 1} (\Omega) \subset V ^m _\lambda (\Omega) $
  in our notation, since $ p = 2 $ and
  $ a = m - \lambda = ( m + 1 ) - ( \lambda + 1 ) $. Fractional
  Kondrat'ev spaces are denoted in \citep{Schneider2021} by
  $ \mathfrak{K} ^s _{ p , a } (\Omega) $, and similarly we have
  $ V ^s _\lambda (\Omega) = \mathfrak{K} _{ 2 , s - \lambda } ^s
  (\Omega) $.

  Finally, we note that an intrinsic treatment of fractional weighted
  Sobolev spaces may be found in \citet[Appendix A]{Dauge1988}.
\end{remark}

Suppose now that
$ u \in H ( \operatorname{div} ; \Omega ) \cap \mathring{ H } (
\operatorname{curl} ; \Omega ) $ satisfies \eqref{e:primal}. We recall
that $ \nabla \cdot u \in \mathring{ H } ^1 (\Omega) $,
since it can be seen as the solution to the Dirichlet problem
\begin{equation*}
  - \Delta (\nabla \cdot u) = \nabla \cdot ( f - \alpha u ) \in H ^{-1} (\Omega) .
\end{equation*}
Likewise, $ \nabla \times u \in H ^1 (\Omega) $, since it can be seen
as the zero-mean solution to a Neumann problem. See~\citet[Theorem
1.2]{CoDa2000} and similar arguments in \citet[Section
2]{BrCuLiSu2008}. Using this, we may now obtain a minimum weighted
Sobolev regularity result for $u$ itself.

\begin{theorem}
  \label{t:u_reg}
  If $u$ satisfies \eqref{e:primal}, then
  $ u \in \bigl[ V ^2 _{ 2 - 2 \mu + \epsilon } (\Omega) \bigr] ^2 $
  for all $ \epsilon > 0 $. Furthermore, if \eqref{e:primal} is
  well-posed, then we have the stability estimate
  $ \lVert u \rVert _{ 2 , 2 - 2 \mu + \epsilon } \lesssim \lVert f
  \rVert _\Omega $.
\end{theorem}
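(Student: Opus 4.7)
The plan is to combine Kondrat'ev-type weighted regularity for the scalar Dirichlet and Neumann Laplacian with the scalar reduction discussed just above the statement. Specifically, the divergence $\sigma \coloneqq -\nabla \cdot u \in \mathring{H}^1(\Omega)$ and the scalar curl $\omega \coloneqq \nabla \times u \in H^1(\Omega)$ solve decoupled scalar Poisson problems inherited from \eqref{e:strong_pde}, each with right-hand side controlled by $f$ and a lower-order term in $u$; viewing $(\nabla\cdot, \nabla\times)$ as the principal symbol of an overdetermined elliptic system, weighted bounds on $u$ itself should then follow from weighted bounds on its divergence and curl.

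The first main step is to invoke the classical Kondrat'ev theory (cf.~\citep{KoMaRo1997}), in which the scalar Dirichlet (respectively Neumann) Laplacian acts as an isomorphism between weighted spaces $V^{s+2}_\lambda(\Omega) \to V^s_{\lambda+2}(\Omega)$, provided the weight $\lambda$ avoids the discrete set of critical exponents at each corner $c_\ell$ of the form $k\pi/\omega_\ell$, $k \in \mathbb{Z}_{>0}$. The definition of $\mu$ in \cref{s:description} is calibrated precisely so that the relevant weight lies strictly below the first such threshold at every corner: at convex corners this is automatic with $\mu_\ell = 1$, while at reentrant corners the strict inequality $\mu_\ell < \pi/(2\omega_\ell)$ places the weight in the admissible range, and the $\epsilon > 0$ slack keeps us strictly non-critical so the isomorphism is available. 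Applying this to the scalar problems for $\sigma$ and $\omega$ yields weighted bounds for both, controlled by $\lVert f\rVert_\Omega + \lVert u\rVert_\Omega$.

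The second step is to reconstruct $u$ itself from these scalar potentials. The strong form \eqref{e:strong_pde} yields $\nabla\sigma + \nabla\times\omega + \alpha u = f$. When $\alpha > 0$ this expresses $u$ pointwise in terms of the scalar potentials and $f$, so matching weights directly gives $u \in \bigl[V^2_{2-2\mu+\epsilon}(\Omega)\bigr]^2$. When $\alpha = 0$ and the complement of $\Omega$ is connected (the well-posed case), I would instead recover $u$ from its prescribed divergence $-\sigma$, curl $\omega$, and tangential boundary trace via a Helmholtz-type decomposition into gradient and rotation parts, transferring the weighted regularity of the scalar potentials to $u$ through standard scalar potential estimates, as in \citep{CoDa2002} and \citep[Section 2]{BrCuLiSu2008}.

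Finally, the stability estimate is produced by chaining the scalar stability bounds with the a priori energy estimate $\lVert u\rVert_\Omega \lesssim \lVert f\rVert_\Omega$ supplied by well-posedness, which absorbs the lower-order $u$-contribution appearing on the right-hand sides of the scalar Poisson problems. The main obstacle is the bookkeeping of weights as we pass from the scalar Kondrat'ev isomorphism to an estimate on $u$ itself, together with verifying non-criticality of the weight at every corner---the latter built into the definition of $\mu$ by construction, modulo the $\epsilon$ slack which is exactly what prevents the weight from landing on a singular exponent at reentrant corners.
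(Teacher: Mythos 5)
There is a genuine gap in your reconstruction of $u$ from the scalar quantities, and it is concentrated in the $\alpha>0$ branch. You apply the Kondrat'ev isomorphism to the Poisson problems satisfied by $\sigma=-\nabla\cdot u$ and $\omega=\nabla\times u$, whose right-hand sides $-\nabla\cdot(f-\alpha u)$ and $\nabla\times(f-\alpha u)$ are only in $H^{-1}(\Omega)$ since $f\in\bigl[L^2(\Omega)\bigr]^2$. This yields at best weighted $H^1$-level control of $\sigma$ and $\omega$ (which is indeed all the paper extracts from these problems). The identity $\alpha u=f-\nabla\sigma-\nabla\times\omega$ then cannot deliver $u\in\bigl[V^2_{2-2\mu+\epsilon}(\Omega)\bigr]^2$: the terms $\nabla\sigma$ and $\nabla\times\omega$ land only in weighted $L^2$, and the term $f$ itself is merely $L^2$, so ``matching weights'' gives nothing beyond $u\in\bigl[L^2(\Omega)\bigr]^2$. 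No pointwise algebraic rearrangement of the PDE can give $u$ two weighted derivatives when one of the summands on the right has none.

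The correct move --- which you in fact describe, but only as a fallback for $\alpha=0$ --- is the one the paper uses for \emph{all} $\alpha$: after reducing to $\Omega$ simply connected by a partition of unity, write the Helmholtz decomposition $u=\nabla\phi+\nabla\times\psi$, where $\phi\in\mathring{H}^1(\Omega)$ and $\psi\in H^1(\Omega)$ solve $-\Delta\phi=-\nabla\cdot u$ and $-\Delta\psi=\nabla\times u$ with Dirichlet and Neumann conditions. The data of \emph{these} Poisson problems are $\sigma,\omega\in H^1(\Omega)\subset V^1_{2-2\mu+\epsilon}(\Omega)$ (via Hardy's inequality and $\mu\le 1$) --- one derivative better than $f$ --- so the weighted isomorphism of \citep[Sections~6.6.1--6.6.2]{KoMaRo1997} gives $\phi,\psi\in V^3_{2-2\mu+\epsilon}(\Omega)$ and hence $u\in\bigl[V^2_{2-2\mu+\epsilon}(\Omega)\bigr]^2$. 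Two smaller remarks: the $\epsilon$ slack in the non-criticality condition $2\mu_\ell-\epsilon<\pi/\omega_\ell$ is needed at right-angle corners ($\omega_\ell=\pi/2$, where $2\mu_\ell=\pi/\omega_\ell$ exactly), not only at reentrant ones; and for the stability bound $\lvert\nabla\cdot u\rvert_1+\lvert\nabla\times u\rvert_1\lesssim\lVert f\rVert_\Omega+\lVert u\rVert_\Omega$ the paper exploits the $L^2$-orthogonality of $-\nabla\nabla\cdot u$ and $\nabla\times\nabla\times u$, a step your sketch leaves implicit.
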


\begin{proof}
  As in \citet[Section 2]{BrCuLiSu2008}, it is sufficient to establish
  regularity and stability for $\Omega$ simply connected, since the
  general case follows by a partition of unity argument.

  Assuming $\Omega$ is simply connected, we can express $u$ in terms
  of its Helmholtz decomposition
  $ u = \nabla \phi + \nabla \times \psi $, where
  $ \phi \in \mathring{ H } ^1 (\Omega) $ and
  $ \psi \in H ^1 (\Omega) $ solve
  \begin{equation*}
    - \Delta \phi = - \nabla \cdot u , \qquad - \Delta \psi = \nabla \times u ,
  \end{equation*}
  with homogeneous Dirichlet and Neumann boundary conditions,
  respectively. For uniqueness, we take $ \int _\Omega \psi = 0 $. To
  determine the regularity of $\phi$ and $\psi$, we follow Chapter~2
  of \citet{NaPl1994}, which characterizes the regularity of solutions
  to Dirichlet and Neumann problems in plane domains with corner
  points; similar results are also found in
  \citet[\S6.6.1--6.6.2]{KoMaRo1997}.

  Hardy's inequality gives
  $ H ^1 (\Omega) \subset V ^1 _\epsilon (\Omega) $ for all
  $ \epsilon > 0 $, and $ \mu \leq 1 $ implies
  $ V ^1 _\epsilon (\Omega) \subset V ^1 _{ 2 - 2 \mu + \epsilon }
  (\Omega) $. Therefore, the right-hand sides $ - \nabla \cdot u $ and
  $ \nabla \times u $ are both in
  $ V ^1 _{ 2 - 2 \mu + \epsilon } (\Omega) $. Furthermore, since
  $ 0 < 2 \mu _\ell - \epsilon < \pi / \omega_\ell $ holds for all
  $ \ell $ when $\epsilon$ is sufficiently small, Theorem~3.1 in
  \citep[Chapter 2]{NaPl1994} implies
  \begin{align}
    \phi &\in V ^3 _{ 2 - 2 \mu + \epsilon } (\Omega), &\lVert \phi \rVert _{ 3, 2 - 2 \mu + \epsilon } &\lesssim \lVert \nabla \cdot u \rVert _{ 1 , 2 - 2 \mu + \epsilon },\notag\\
    \intertext{and Theorem~4.2 in \citep[Chapter 2]{NaPl1994} implies}
    \psi &\in V ^3 _{ 2 - 2 \mu + \epsilon } (\Omega), &\lVert \psi \rVert _{ 3, 2 - 2 \mu + \epsilon } &\lesssim \lVert \nabla \times u \rVert _{ 1 , 2 - 2 \mu + \epsilon }\notag.\\
    \intertext{By $ u = \nabla \phi + \nabla \times \psi $ and the continuity of the inclusion $ H ^1 (\Omega) \subset V ^1 _{ 2 - 2 \mu + \epsilon } (\Omega) $, we thus obtain}
    u &\in \bigl[ V ^2 _{ 2 - 2 \mu + \epsilon } (\Omega) \bigr] ^2 , & \lVert u \rVert _{ 2 , 2 - 2 \mu + \epsilon } &\lesssim \lVert \nabla \cdot u \rVert _1 + \lVert \nabla \times u \rVert _1 \label{e:weighted_stability},
  \end{align}
  where $ \lVert \cdot \rVert _1 $ is the $ H ^1 $ norm. This proves
  the first statement.

  Now, observe that the strong form \eqref{e:strong_pde} rearranges to
  \begin{equation*}
    - \nabla \nabla \cdot u + \nabla \times \nabla \times u = f - \alpha u ,
  \end{equation*}
  where the left-hand side is an $ L ^2 $-orthogonal sum. Therefore,
  by the Pythagorean theorem,
  \begin{equation*}
    \lvert \nabla \cdot u \rvert _1 ^2 + \lvert \nabla \times u \rvert _1 ^2 = \lVert f - \alpha u \rVert _\Omega ^2 \quad \Longrightarrow \quad \lvert \nabla \cdot u \rvert _1 + \lvert \nabla \times u \rvert _1 \lesssim \lVert f \rVert _\Omega + \lVert u \rVert _\Omega ,
  \end{equation*}
  where $ \lvert \cdot \rvert _1 $ is the $ H ^1 $ seminorm. Combining
  this with \eqref{e:weighted_stability} gives
  \begin{equation*}
    \lVert u \rVert _{ 2, 2 - 2 \mu + \epsilon } \lesssim \lVert f \rVert _\Omega + \lVert u \rVert _\Omega + \lVert \nabla \cdot u \rVert _\Omega + \lVert \nabla \times u \rVert _\Omega .
  \end{equation*}
  Finally, if \eqref{e:primal} is well-posed, then
  $ \lVert u \rVert _\Omega + \lVert \nabla \cdot u \rVert _\Omega +
  \lVert \nabla \times u \rVert _\Omega \lesssim \lVert f \rVert
  _\Omega $, which completes the proof.
\end{proof}

Most of the subsequent error analysis will use the following corollary
of \cref{t:u_reg}.

\begin{corollary}
  \label{c:u_reg}
  If $ s < \mu _\ell $ for all $ \ell $, then
  $ u \in \bigl[ V ^{ s + 1 } _{ 1 - \mu } (\Omega) \bigr] ^2 $ and
  $ \nabla \cdot u , \nabla \times u \in V ^s _{ \mu -1 } (\Omega)
  $. Furthermore, if \eqref{e:primal} is well-posed, then we have the
  stability estimate
  \begin{equation*}
    \lVert u \rVert _{ s + 1 , 1 - \mu } + \lVert \nabla \cdot u
    \rVert _{ s , \mu -1 } + \lVert \nabla \times u \rVert _{ s , \mu -1
    } \lesssim \lVert f \rVert _\Omega .
  \end{equation*}
\end{corollary}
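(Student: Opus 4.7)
The plan is to derive both claims from \cref{t:u_reg} by combining the two standard inclusions introduced earlier in the section: the interpolation inclusion $V^{s+\epsilon'}_{\lambda+\epsilon'}(\Omega) \subset V^s_\lambda(\Omega)$ and the monotonicity inclusion $V^m_{\lambda'}(\Omega) \subset V^m_\lambda(\Omega)$ for $\lambda' \leq \lambda$. The hypothesis $s < \mu_\ell$ for all $\ell$ enters only at the end, as the room needed to choose a small positive weight slack $\epsilon$ so that both inclusions carry us into the desired target.

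For $u$ itself, I would start from $u \in [V^2_{2-2\mu+\epsilon}(\Omega)]^2$, together with the stability bound $\lVert u \rVert_{2, 2-2\mu+\epsilon} \lesssim \lVert f \rVert_\Omega$ furnished by \cref{t:u_reg}. Interpolation with parameter $\epsilon' = 1-s > 0$ (valid since $s < \mu_\ell \leq 1$) trades one unit of regularity for one unit of weight and yields $V^2_{2-2\mu+\epsilon}(\Omega) \subset V^{s+1}_{1+s-2\mu+\epsilon}(\Omega)$. Monotonicity in $\lambda$ then closes the gap to $V^{s+1}_{1-\mu}(\Omega)$ precisely when $1+s-2\mu+\epsilon \leq 1-\mu$ componentwise, i.e., $\epsilon \leq \mu_\ell - s$ for every $\ell$. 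Since $s < \mu_\ell$, I may take any $\epsilon \in (0, \min_\ell(\mu_\ell - s))$, and the stability estimate transfers by continuity of the inclusions.

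For $\nabla \cdot u$ and $\nabla \times u$, I would invoke the fact recorded just before \cref{t:u_reg} that both lie in $H^1(\Omega)$; indeed, the Pythagorean identity in the proof of that theorem yields $\lvert \nabla \cdot u \rvert_1 + \lvert \nabla \times u \rvert_1 \lesssim \lVert f \rVert_\Omega + \lVert u \rVert_\Omega$, bounded by $\lVert f \rVert_\Omega$ under well-posedness. Hardy's inequality then gives $H^1(\Omega) \subset V^1_\epsilon(\Omega)$, and the identical two-step recipe applies: interpolation with $\epsilon' = 1-s$ descends into $V^s_{\epsilon+s-1}(\Omega)$, and monotonicity then reaches $V^s_{\mu-1}(\Omega)$ under the same bound $\epsilon \leq \mu_\ell - s$. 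Choosing $\epsilon$ below the common threshold $\min_\ell(\mu_\ell - s) > 0$ handles all three quantities in one stroke. The only real delicacy is bookkeeping: interpolation trades regularity for weight at a one-to-one rate, and monotonicity can only lower the weight exponent, so one must verify that $\epsilon > 0$ can indeed be chosen to absorb both gaps — and it is precisely the strict inequality $s < \mu_\ell$ that keeps these gaps open.
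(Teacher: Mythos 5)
Your proposal is correct and follows essentially the same route as the paper: both derive the result from \cref{t:u_reg} (and the $H^1$ regularity of $\nabla\cdot u$ and $\nabla\times u$) by composing the interpolation inclusion $V^{s+\epsilon'}_{\lambda+\epsilon'}\subset V^s_\lambda$ with $\epsilon'=1-s$ and the weight-monotonicity inclusion, with the strict inequality $s<\mu_\ell$ providing exactly the slack $\epsilon\le\mu_\ell-s$ you identify. The only cosmetic difference is that you apply interpolation before monotonicity while the paper does the reverse, which is immaterial.
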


\begin{proof}
  Pick $ \epsilon > 0 $ such that $ s < \mu _\ell - \epsilon $ for all
  $ \ell $. Then this follows by the continuous inclusions
  \begin{equation*}
    V ^2 _{ 2 - 2 \mu + \epsilon } (\Omega) = V ^{ s + 1 + ( 1 - s ) }_{ 1 - \mu + ( 1 - \mu + \epsilon ) } (\Omega) \subset V ^{ s + 1 + ( 1 - s ) } _{ 1 - \mu + ( 1 - s ) } (\Omega) \subset V ^{ s + 1 } _{ 1 - \mu } (\Omega) 
  \end{equation*}
  and
  \begin{equation*}
    H ^1 (\Omega) \subset V ^1 _\epsilon (\Omega) = V ^{ s + ( 1 - s ) } _{ \mu -1 + ( 1 - \mu + \epsilon ) } (\Omega) \subset V ^{ s + ( 1 - s ) } _{ \mu - 1 + ( 1 - s ) } (\Omega) \subset V ^s _{ \mu -1 } (\Omega) ,
  \end{equation*}
  together with \cref{t:u_reg}.
\end{proof}

Finally, we note that this also implies the following well-known
unweighted Sobolev regularity result, cf.~\citet*{AsCiSo1998}.

\begin{corollary}
  \label{c:u_reg_unweighted}
  If $ s < \min \bigl( 1, \pi / (2 \omega_\ell) \bigr) $ for all
  $ \ell $, then
  $ u \in \bigl[ V _0 ^{2 s} (\Omega) \bigr] ^2 \subset \bigl[ H ^{2 s}
  (\Omega) \bigr] ^2 $. Furthermore, if \eqref{e:primal} is
  well-posed, then we have the stability estimate
  $ \lVert u \rVert _{2 s} \lesssim \lVert f \rVert _\Omega $.
\end{corollary}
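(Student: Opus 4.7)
The strategy is to deduce the corollary from \cref{t:u_reg} by following the same inclusion-chain template used in \cref{c:u_reg}, reducing the regularity index from $2$ down to $2s$ while simultaneously reducing the weight index from $2-2\mu+\epsilon$ down to $0$.

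First I would choose the parameters $\mu_\ell$ appropriately. The hypothesis $s < \min\bigl(1,\pi/(2\omega_\ell)\bigr)$ for every $\ell$ is compatible with the admissibility constraint on $\mu_\ell$: at corners with $\omega_\ell\le \pi/2$ the problem setup fixes $\mu_\ell=1>s$, while at reentrant corners ($\omega_\ell>\pi/2$) the hypothesis $s<\pi/(2\omega_\ell)$ leaves room to pick an admissible $\mu_\ell\in\bigl(s,\pi/(2\omega_\ell)\bigr)$. With $\mu_\ell>s$ secured for every $\ell$, I would then choose $\epsilon>0$ small enough that $s+\epsilon/2 \le \mu_\ell$ for all $\ell$, equivalently $2s-2\mu_\ell+\epsilon\le 0$ componentwise.

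Next, apply \cref{t:u_reg} with this $\mu$ and $\epsilon$ to obtain $u\in \bigl[V^2_{2-2\mu+\epsilon}(\Omega)\bigr]^2$ together with the bound $\lVert u\rVert_{2,2-2\mu+\epsilon}\lesssim \lVert f\rVert_\Omega$ (assuming well-posedness). Then I would chain together the continuous inclusions listed in \cref{s:analysis}: taking the interpolation-based inclusion $V^{s'+\delta}_{\lambda+\delta}(\Omega)\subset V^{s'}_\lambda(\Omega)$ with $s'=2s$ and $\delta=2-2s\ge 0$,
\begin{equation*}
V^2_{2-2\mu+\epsilon}(\Omega)=V^{2s+(2-2s)}_{(2s-2\mu+\epsilon)+(2-2s)}(\Omega)\subset V^{2s}_{2s-2\mu+\epsilon}(\Omega)\subset V^{2s}_0(\Omega),
\end{equation*}
where the last inclusion uses $V^m_{\lambda'}\subset V^m_\lambda$ for $\lambda'\le\lambda$ (extended from integer $m$ to real $s'\ge 0$ by interpolation), applied with $2s-2\mu_\ell+\epsilon\le 0$. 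Finally, the inclusion $V^{2s}_0(\Omega)\subset H^{2s}(\Omega)$ noted in the text yields $u\in[H^{2s}(\Omega)]^2$, and the stability estimate $\lVert u\rVert_{2s}\lesssim\lVert f\rVert_\Omega$ follows from the continuity of each step in this chain combined with the stability part of \cref{t:u_reg}.

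There is no real obstacle here beyond careful bookkeeping; the only point that requires attention is verifying that the choice of admissible $\mu_\ell$ with $\mu_\ell>s$ is always possible under the stated hypothesis on $s$, which is exactly why the cutoff $\min\bigl(1,\pi/(2\omega_\ell)\bigr)$ (rather than just $\pi/(2\omega_\ell)$) appears in the statement. The rest is a routine tracking of indices through the Kondrat'ev-space inclusions already recorded in \cref{s:analysis}.
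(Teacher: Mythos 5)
Your proposal is correct and follows essentially the same argument as the paper: both deduce the result from \cref{t:u_reg} via a chain of continuous inclusions ending in $V^{2s}_0(\Omega)\subset H^{2s}(\Omega)$, under the same effective condition $s\le\mu_\ell-\epsilon/2$; you merely apply the order-lowering and weight-monotonicity inclusions in the opposite order from the paper, which is immaterial. Your explicit discussion of why an admissible $\mu_\ell>s$ exists is a slightly more careful rendering of the paper's ``taking $\mu_\ell$ arbitrarily close to $\min\bigl(1,\pi/(2\omega_\ell)\bigr)$.''
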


\begin{proof}
  Since $ \mu _\ell $ may be taken arbitrarily close to
  $ \min \bigl( 1, \pi / (2 \omega_\ell) \bigr) $, choose
  $ \mu _\ell $ so that $ s < \mu_\ell $ for all $ \ell $. Then
  \cref{c:u_reg} implies
  $ u \in \bigl[ V ^{ s + 1 } _{ 1 - \mu } (\Omega) \bigr] ^2 $, and
  we have the continuous inclusions
  \begin{equation*}
    V ^{ s + 1 } _{ 1 - \mu } (\Omega) = V ^{ 2 s + ( 1 - s ) } _{ 1 - \mu } (\Omega) \subset V ^{ 2 s + ( 1 - s ) } _{ 1 - s } (\Omega) \subset V _0 ^{ 2 s } (\Omega) \subset H ^{ 2 s } (\Omega) ,
  \end{equation*}
  which completes the proof.
\end{proof}

In particular, since $ \omega_\ell < 2 \pi $ for all $ \ell $, we may
take $ s > \frac{ 1 }{ 4 } $ in \cref{c:u_reg_unweighted} to conclude
that $ u \in \bigl[ H ^\sigma (\Omega) \bigr] ^2 $ with
$ \sigma > \frac{1}{2} $.

\subsection{Preliminary estimates}

We now establish two weighted Sobolev norm approximation results that
will be useful in the subsequent error analysis; compare Lemmas 5.2
and 5.3 in \citep{BrLiSu2007}.

For the remainder of the paper, we assume that $ \mathcal{T} _h $ is
shape-regular, but we make no additional assumptions about
quasi-uniformity or grading. Let $ h _K $ denote the diameter of
$ K \in \mathcal{T} _h $ and
$ h \coloneqq \max _{ K \in \mathcal{T} _h } h _K $. We denote the
weighted Sobolev norm on $ V ^s _\lambda (\Omega) \rvert _K $ by
$ \lVert \cdot \rVert _{ s, \lambda , K } $ (with distances taken to
the corners of $\Omega$, not those of $K$) and the ordinary Sobolev
seminorm on $ H ^s (K) $ by $ \lvert \cdot \rvert _{s, K } $.

\begin{lemma}
  \label{l:penalty_approx}
  If
  $ v \in \bigl[ H ^\sigma (\Omega) \cap V ^{ s + 1 } _{ 1 - \mu }
  (\Omega) \bigr] ^2 $ with $ \sigma > \frac{1}{2} $ and $ s \leq k $,
  then
  \begin{equation*}
    \frac{ \bigl[ \Phi _\mu (e) \bigr] ^2 }{ \lvert e \rvert } \lVert v - \Pi _h v \rVert _e ^2 \lesssim h _K ^{ 2 s } \lVert v \rVert _{ s + 1 , 1 - \mu, K } ^2 ,
  \end{equation*}
  for all $ K \in \mathcal{T} _h $ and $ e \subset \partial K $.
\end{lemma}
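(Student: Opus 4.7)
The plan is to split the argument according to whether $\bar K$ abuts a corner of $\Omega$, and in each case reduce the weighted boundary estimate to a scale-invariant reference inequality.

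Suppose first that $ \bar K $ is disjoint from every corner $ c _\ell $. By shape-regularity, $ r _\ell (x) \sim \operatorname{dist}(K, c _\ell) $ uniformly on $K$ for every $\ell$, so every weight appearing in the norm $ \lVert v \rVert _{ s + 1 , 1 - \mu , K } $ is comparable to a positive constant and this norm dominates $ \Phi _\mu (e) \lvert v \rvert _{ s + 1 , K } $. Since $ \Pi _h $ reproduces $ [P _k (K)] ^2 \supset [P _s (K)] ^2 $ for $ s \leq k $, the Bramble--Hilbert lemma on the reference triangle combined with a scaled trace inequality yields $ \lVert v - \Pi _h v \rVert _e ^2 \lesssim h _K ^{ 2 s + 1 } \lvert v \rvert _{ s + 1 , K } ^2 $ (with an interpolation step between the integer values of $s$). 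Combining with $ \lvert e \rvert \sim h _K $ proves the claim in this case.

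Next, suppose $ c _\ell \in \bar K $ for some (necessarily unique when $h$ is small) corner $ c _\ell $. Set $ \hat K \coloneqq h _K ^{-1} ( K - c _\ell ) $ and $ \hat v ( \hat x ) \coloneqq v ( c _\ell + h _K \hat x ) $, so $ \hat K $ is a shape-regular triangle of unit diameter with a vertex at the origin. A direct change of variables gives $ \lVert v - \Pi _h v \rVert _e ^2 = h _K \lVert \hat v - \hat \Pi \hat v \rVert _{\hat e} ^2 $, $ \Phi _\mu (e) \sim h _K ^{ 1 - \mu _\ell } $ (the factors from other corners being uniformly bounded), $ \lvert e \rvert \sim h _K $, and
\begin{equation*}
  \lVert v \rVert _{ s + 1 , 1 - \mu , K } ^2 \sim h _K ^{ 2 ( 1 - \mu _\ell - s ) } \lVert \hat v \rVert _{ s + 1 , 1 - \mu _\ell , \hat K } ^2 .
\end{equation*}
Substituting, the common factor $ h _K ^{ 2 ( 1 - \mu _\ell ) } $ cancels and the claim reduces to the scale-invariant reference inequality
\begin{equation*}
  \lVert \hat v - \hat \Pi \hat v \rVert _{\hat e} ^2 \lesssim \lVert \hat v \rVert _{ s + 1 , 1 - \mu _\ell , \hat K } ^2 ,
\end{equation*}
with constant depending only on the shape-regularity class, $k$, and $ \mu _\ell $.

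For the reference inequality I would invoke a weighted Bramble--Hilbert / Deny--Lions argument on Kondrat'ev spaces. Since $ \hat \Pi $ reproduces $ [P _k (\hat K)] ^2 \supset [P _s (\hat K)] ^2 $, writing $ \hat v - \hat \Pi \hat v = ( \hat v - \hat p ) - \hat \Pi ( \hat v - \hat p ) $ for arbitrary $ \hat p \in [P _s (\hat K)] ^2 $ reduces matters to: (i) continuity of $ \hat \Pi \colon V ^{ s + 1 } _{ 1 - \mu _\ell } (\hat K) \to L ^2 (\hat e) $, and (ii) a weighted polynomial-approximation estimate on $ \hat K $ (available for Kondrat'ev spaces via \citep{KoMaRo1997}), with non-integer $ s $ handled by interpolating between $ s = 0 , 1 , \ldots , k $. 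The main obstacle is step (i): the BSM degrees of freedom involve edge integrals on edges incident to the singular vertex, where the weight $ r ^{ 1 - \mu _\ell } $ vanishes. Verifying the trace bound there uses the hypothesis $ \mu _\ell \leq 1 $, which keeps the weight nonnegative and subcritical, so that $ V ^{ s + 1 } _{ 1 - \mu _\ell } (\hat K) $-functions admit well-defined $ L ^2 $-traces on the corner edges, controlled by the full weighted norm.
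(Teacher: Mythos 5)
Your proof is correct in outline and, for elements away from the corners, is identical to the paper's argument (unweighted Bramble--Hilbert plus scaled trace inequality, then absorb the comparable weight $\Phi_\mu(e)$ into the weighted norm). For corner elements you take a genuinely different route. The paper never leaves unweighted Sobolev spaces: it invokes the continuous embedding $V^{s+1}_{1-\mu}(K)\subset H^{s+\mu_\ell}(K)$, applies the standard trace/Bramble--Hilbert machinery at the \emph{reduced} fractional order $s+\mu_\ell$ to get $\lvert e\rvert^{-1}\lVert v-\Pi_h v\rVert_e^2\lesssim h_K^{2(s+\mu_\ell)-2}\lvert v\rvert_{s+\mu_\ell,K}^2$, and recovers the missing power of $h$ from $[\Phi_\mu(e)]^2\sim h_K^{2(1-\mu_\ell)}$. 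You instead rescale to a unit reference triangle and exploit the dilation homogeneity of the Kondrat'ev norm at the corner, reducing everything to a scale-invariant reference inequality in the weighted space. Your scaling computations are right, and a pleasant consequence of your setup (which you partly obscure by invoking Deny--Lions) is that on corner elements no polynomial reproduction is needed at all: since the stated reference inequality carries the \emph{full} weighted norm on the right and every term of that norm scales by the same factor $h_K^{2(1-\mu_\ell-s)}$, the rate $h_K^{2s}$ falls out of the scaling alone, and all that remains is uniform boundedness of $\hat v\mapsto \hat v-\hat\Pi\hat v$ from $V^{s+1}_{1-\mu_\ell}(\hat K)$ into $L^2(\hat e)$. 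That boundedness is the one piece you defer to the literature, and the cleanest way to establish it is precisely the embedding $V^{s+1}_{1-\mu_\ell}(\hat K)\subset H^{s+\mu_\ell}(\hat K)$ that the paper uses directly --- so the two proofs ultimately rest on the same fact, but the paper's version avoids both the rescaling and any appeal to weighted approximation theory, making it the more elementary of the two. (Both arguments share the implicit requirement that traces on $\hat e$ make sense, i.e.\ $s+\mu_\ell>1/2$ on corner elements and the uniformity of constants over the shape-regularity class; neither issue is specific to your approach.)
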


\begin{proof}
  If $K$ does not have any of the corners $ c _\ell $ as a vertex,
  then $ v \rvert _K \in \bigl[ H ^{ s + 1 } (K) \bigr] ^2 $, so the trace
  inequality with scaling and Bramble--Hilbert lemma imply
  \begin{equation*}
    \lvert e \rvert ^{-1} \lVert v - \Pi _h v \rVert _e ^2 \lesssim h _K ^{ - 2 } \lVert v - \Pi _h v \rVert _K ^2 + \lvert v - \Pi _h v \rvert _{ 1, K } ^2 \lesssim h _K ^{ 2 s } \lvert v \rvert ^2 _{ s + 1 , K } .
  \end{equation*}
  By shape regularity, we have
  $ \Phi _\mu (e) = r ^{ 1 - \mu } (m _e) \sim r ^{ 1 - \mu } (x) $
  for all $ x \in K $, and therefore
  \begin{equation*}
    \frac{ \bigl[ \Phi _\mu (e) \bigr] ^2 }{ \lvert e \rvert } \lVert v - \Pi _h v \rVert _e ^2 \lesssim h _K ^{ 2 s } \bigl[ \Phi _\mu (e) \bigr] ^2 \lvert v \rvert ^2 _{ s + 1 , K } \lesssim h _K ^{ 2 s } \lVert v \rVert ^2 _{ s + 1 , 1 - \mu , K } .
  \end{equation*}
  On the other hand, if $K$ has $ c _\ell $ as a vertex, then the
  inclusions
  $ V ^{ s + 1 } _{ 1 - \mu _\ell } (K) \subset V _0 ^{ s + \mu _\ell}
  (K) \subset H ^{ s + \mu_\ell } (K) $ imply
  $ v \rvert _K \in \bigl[ H ^{ s + \mu _\ell } (K) \bigr] ^2
  $. Hence, the trace inequality with scaling and Bramble--Hilbert
  give
  \begin{align*}
    \lvert e \rvert ^{-1} \lVert v - \Pi _h v \rVert _e ^2
    &\lesssim h _K ^{ - 2 } \lVert v - \Pi _h v \rVert _K ^2 + h _K ^{ 2 \min( 1, \sigma ) - 2 } \lvert v - \Pi _h v \rvert ^2 _{ \min (1, \sigma ), K }\\
    &\lesssim h _K ^{ 2 ( s + \mu _\ell ) - 2 } \lvert v \rvert _{ s + \mu_\ell, K } ^2 .
  \end{align*}  
  In this case,
  $ \Phi _\mu (e) \sim r _\ell ^{ 1 - \mu _\ell } ( m _e ) \sim h _K
  ^{ 1 - \mu _\ell } $, and therefore
  \begin{equation*}
    \frac{ \bigl[ \Phi _\mu (e) \bigr] ^2 }{ \lvert e \rvert } \lVert v - \Pi _h v \rVert _e ^2 \lesssim h _K ^{ 2 s } \lvert v \rvert ^2 _{ s + \mu _\ell , K } \lesssim h _K ^{ 2 s } \lVert v \rVert ^2 _{ s + 1 , 1 - \mu , K } ,
  \end{equation*}
  where the last inequality is due to the continuity of the inclusion
  $ V ^{ s + 1 } _{ 1 - \mu } (K) \subset H ^{ s + \mu _\ell } (K) $.
\end{proof}

\begin{lemma}
  \label{l:consistency_approx}
  If $ \eta \in H ^1 (\Omega) \cap V ^s _{ \mu -1 } (\Omega) $ with
  $ s \leq k $, then 
  \begin{equation*}
    \frac{ \lvert e \rvert }{ \bigl[ \Phi _\mu (e) \bigr] ^2 } \lVert \eta - P _h \eta \rVert _e ^2 \lesssim h _K ^{ 2 s } \lVert \eta \rVert _{ s , \mu -1, K } ^2 ,
  \end{equation*}
  for all $ K \in \mathcal{T} _h $ and $ e \subset \partial K $.
\end{lemma}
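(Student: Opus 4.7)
The plan is to mirror the case analysis used in \cref{l:penalty_approx}, splitting on whether $K$ has a corner of $\Omega$ among its vertices. When $K$ sits away from all corners, every $r_\ell \sim 1$ on $K$, so $\lVert \cdot \rVert _{ s, \mu - 1, K }$ is equivalent to the ordinary $H ^s (K)$ norm and $\Phi _\mu (e) \sim 1$. In this regime the estimate follows immediately from a scaled trace inequality combined with the standard Bramble--Hilbert approximation property of the $L ^2 $-projection $ P _h $ onto $ P _{ k - 1 } (K) $, which yields
\[
\lvert e \rvert \, \lVert \eta - P _h \eta \rVert _e ^2 \lesssim \lVert \eta - P _h \eta \rVert _K ^2 + h _K ^2 \lvert \eta - P _h \eta \rvert _{ 1, K } ^2 \lesssim h _K ^{ 2 s } \lvert \eta \rvert _{ s, K } ^2 \lesssim h _K ^{ 2 s } \lVert \eta \rVert _{ s, \mu - 1, K } ^2 .
\]

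The interesting case is when $ K $ has some corner $ c _\ell $ as a vertex. Here I would exploit the elementary fact that $ r _\ell (x) \lesssim h _K $ for all $ x \in K $, together with $ 1 - \mu _\ell \geq 0 $, to get the pointwise inequality $ r _\ell ^{ 2 ( 1 - \mu _\ell ) } \lesssim h _K ^{ 2 ( 1 - \mu _\ell ) } $. Inserting this into the weighted norm gives, for each $ \lvert \beta \rvert \leq s $,
\[
\lVert \partial ^\beta \eta \rVert _K \lesssim h _K ^{ s - \lvert \beta \rvert + 1 - \mu _\ell } \lVert r _\ell ^{ \mu _\ell - 1 - s + \lvert \beta \rvert } \partial ^\beta \eta \rVert _K ,
\]
and summing produces the embedding bound $ \lvert \eta \rvert _{ s, K } \lesssim h _K ^{ 1 - \mu _\ell } \lVert \eta \rVert _{ s, \mu - 1, K } $ (with analogous bounds for lower-order seminorms). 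The scaled trace inequality and Bramble--Hilbert, used exactly as in the first case, then give
\[
\lvert e \rvert \, \lVert \eta - P _h \eta \rVert _e ^2 \lesssim h _K ^{ 2 s } \lvert \eta \rvert _{ s, K } ^2 \lesssim h _K ^{ 2 s + 2 ( 1 - \mu _\ell ) } \lVert \eta \rVert _{ s, \mu - 1, K } ^2 .
\]
By shape regularity and the fact that $K$ has $ c _\ell $ as a vertex, $ \Phi _\mu (e) \sim r _\ell ^{ 1 - \mu _\ell } (m _e) \sim h _K ^{ 1 - \mu _\ell } $, so dividing by $ \bigl[ \Phi _\mu (e) \bigr] ^2 $ absorbs exactly the extra $ h _K ^{ 2 ( 1 - \mu _\ell ) } $ factor and yields the claim.

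I expect the main obstacle to be establishing the corner embedding $ V ^s _{ \mu - 1 } (K) \hookrightarrow H ^s (K) $ with the correct $ h _K $-dependent constant; this is the step where the specific choice of $ \mu _\ell $, and hence of the penalty weight $ \Phi _\mu $, really matters. The exponent $ 1 - \mu _\ell $ coming from $ r _\ell \leq h _K $ at a corner triangle matches the exponent appearing in $ \Phi _\mu (e) $ exactly, which is precisely the reason that $ \gamma _e = \bigl[ \Phi _\mu (e) \bigr] ^2 / \lvert e \rvert $ is the right scaling to make both the penalty bound of \cref{l:penalty_approx} and this consistency bound come out with clean powers of $ h _K $. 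Everything else is routine: the interior of the argument reduces, element by element, to polynomial approximation on a shape-regular triangle.
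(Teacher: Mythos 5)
Your overall strategy works and uses the same basic tools as the paper (scaled trace inequality, Bramble--Hilbert for $P_h$, and the shape-regularity comparison between $r_\ell$ and $\Phi_\mu(e)$), but you have imported the two-case structure of \cref{l:penalty_approx} where it is not needed, and in doing so you leave one class of triangles uncovered. The paper's proof is uniform: since $\mu - 1 \leq 0$, one has $V^s_{\mu-1}(\Omega) \subset V^s_0(\Omega) \subset H^s(\Omega)$ globally, so the trace-plus-Bramble--Hilbert bound $\lvert e \rvert\, \lVert \eta - P_h \eta \rVert_e^2 \lesssim h_K^{2s} \lvert \eta \rvert_{s,K}^2$ holds on every $K$ without distinguishing corner triangles; the weight is then reinstated via the single pointwise inequality $[\Phi_\mu(e)]^{-1} = r^{\mu-1}(m_e) \lesssim r^{\mu-1}(x)$ for $x \in K$, which is valid on all triangles (on a corner triangle because $r_\ell(x) \lesssim h_K \sim r_\ell(m_e)$ and $\mu_\ell - 1 \leq 0$, and elsewhere because $r_\ell(x) \sim r_\ell(m_e)$). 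This is why, unlike in \cref{l:penalty_approx} where the sign of the exponent $1-\mu$ forces genuine case analysis and the use of $H^{s+\mu_\ell}$ regularity at corners, no case split is required here.

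The concrete loose end in your version: you announce the dichotomy ``$K$ has a corner as a vertex'' versus not, but your first case actually argues under the stronger assumption that $K$ ``sits away from all corners,'' where $r_\ell \sim 1$ and $\Phi_\mu(e) \sim 1$. A triangle within $O(h_K)$ of a corner $c_\ell$ without having $c_\ell$ as a vertex falls into neither argument as written: there $\Phi_\mu(e) \sim h_K^{1-\mu_\ell}$ is not $\sim 1$, so the factor $[\Phi_\mu(e)]^{-2}$ you silently drop blows up as $h \to 0$, and your final chain $h_K^{2s}\lvert\eta\rvert_{s,K}^2 \lesssim h_K^{2s}\lVert\eta\rVert_{s,\mu-1,K}^2$ does not by itself absorb it. The repair is exactly the shape-regularity fact you already use in your corner case: for such $K$, $r_\ell(x) \sim r_\ell(m_e)$ uniformly on $K$, so $[\Phi_\mu(e)]^{-2}\lvert\eta\rvert_{s,K}^2 \lesssim \lVert r^{\mu-1} D^s \eta\rVert_K^2 \leq \lVert \eta \rVert_{s,\mu-1,K}^2$. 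Your corner-triangle computation itself is correct and matches the exponent bookkeeping of the paper; once the intermediate triangles are handled as above, the proof is complete.
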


\begin{proof}
  Since $ \mu - 1 \leq 0 $, we have
  $ \eta \in V ^s _{ \mu -1 } (\Omega) \subset V _0 ^s (\Omega)
  \subset H ^s (\Omega) $. Thus, for all $ K \in \mathcal{T} _h $ and
  $ e \subset \partial K $, the trace inequality with scaling and
  Bramble--Hilbert lemma give
  \begin{equation*}
    \lvert e \rvert \lVert \eta - P _h \eta \rVert _e ^2 \lesssim \lVert \eta - P _h \eta \rVert _K ^2 + h _K ^2 \lvert \eta - P _h \eta \rvert _{1, K } ^2 \lesssim h _K ^{2 s } \lvert \eta \rvert ^2 _{ s, K } .
  \end{equation*}
  By shape regularity,
  $ \bigl[ \Phi _\mu (e) \bigr] ^{-1} = r ^{ \mu - 1 } ( m _e )
  \lesssim r ^{ \mu -1 } (x) $ for all $ x \in K $, and therefore
  \begin{equation*}
    \frac{ \lvert e \rvert }{ \bigl[ \Phi _\mu (e) \bigr] ^2 } \lVert \eta - P _h \eta \rVert _e ^2 \lesssim h _K ^{2 s} \bigl[ \Phi _\mu (e) \bigr] ^{-2} \lvert \eta \rvert ^2 _{ s, K } \lesssim h _K ^{2 s} \lVert \eta \rVert ^2 _{ s, \mu -1 , K } ,
  \end{equation*}
  which completes the proof.
\end{proof}

\subsection{Error estimates}
\label{s:error_estimates}

We now estimate the error $ u - u _h $, where $u$ satisfies
\eqref{e:primal} and $ u _h $ satisfies \eqref{e:one-field}. The
argument follows a similar general outline to that in
\citet{BrCuLiSu2008}, but the details differ in several important
respects---especially in the use of weighted Sobolev regularity
hypotheses and higher-order polynomial approximation, and in the
absence of mesh-grading assumptions.

As in \citep{BrCuLiSu2008}, we will first estimate the error in the
mesh-dependent energy norm
\begin{equation*}
  \lVert v \rVert _h ^2 \coloneqq \lVert v \rVert _\Omega ^2 + \lVert \nabla \cdot v \rVert _{ \mathcal{T} _h } ^2 + \lVert \nabla \times v \rVert _{ \mathcal{T} _h } ^2 + \frac{1}{2} \bigl\langle \gamma \llbracket v \rrbracket , \llbracket v \rrbracket \bigr\rangle _{ \mathcal{E} _h ^\circ } + \langle \gamma v \times n , v \times n \rangle _{ \mathcal{E} _h ^\partial } .
\end{equation*}
If we extend $ a _h ( \cdot , \cdot ) $ from $ \mathring{ V } _h $ to
$ H ( \operatorname{div}; \Omega ) \cap \mathring{ H } (
\operatorname{curl} ; \Omega ) + \mathring{ V } _h $, then in the
special case $ \alpha = 1 $, this is precisely the norm associated to
$ a _h ( \cdot , \cdot ) $ considered as an inner product.

For arbitrary $\alpha$, we immediately see that
$ a _h ( \cdot , \cdot ) $ is bounded with respect to
$ \lVert \cdot \rVert _h $. For $ \alpha > 0 $, we have the coercivity
condition
\begin{equation*}
  a _h ( v , v ) \geq \min ( 1, \alpha ) \lVert v \rVert _h ^2  .
\end{equation*}
If the complement of $\Omega$ is connected, then we also have
coercivity for $ \alpha = 0 $, by the argument in the proof of
\cref{t:well-posed}.  In general, for $ \alpha \leq 0 $, we have a
G\aa rding inequality (which is actually an equality),
\begin{equation*}
  a _h ( v, v ) + \bigl( \lvert \alpha \rvert + 1 \bigr) \lVert v \rVert ^2 _\Omega = \lVert v \rVert _h ^2 .
\end{equation*}
This implies the following Strang-type abstract estimates, whose
proofs are identical to those of Lemma~3.5 and Lemma~3.6 in
\citet{BrLiSu2008}.

\begin{lemma}
  \label{l:strang}
  If $ \alpha > 0 $, $u$ is the solution to \eqref{e:primal}, and
  $ u _h $ is the solution to \eqref{e:one-field}, then
  \begin{equation}
    \label{e:strang_coercive}
    \lVert u - u _h \rVert _h \lesssim \inf _{ v _h \in \mathring{ V } _h } \lVert u - v _h \rVert _h + \sup _{ 0 \neq w _h \in \mathring{ V } _h } \frac{ a _h ( u - u _h , w _h ) }{ \lVert w _h \rVert _h } ,
  \end{equation}
  and if the complement of $\Omega$ is connected, then this also holds
  for $ \alpha = 0 $. If $ \alpha \leq 0 $, $u$ satisfies
  \eqref{e:primal}, and $ u _h $ satisfies \eqref{e:one-field}, then
  \begin{equation}
    \label{e:strang_garding}
    \lVert u - u _h \rVert _h \lesssim \inf _{ v _h \in \mathring{ V } _h } \lVert u - v _h \rVert _h + \sup _{ 0 \neq w _h \in \mathring{ V } _h } \frac{ a _h ( u - u _h , w _h ) }{ \lVert w _h \rVert _h } + \lVert u - u _h \rVert _\Omega .
  \end{equation}
\end{lemma}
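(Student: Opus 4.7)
The plan is to run a standard Strang-type argument, using the coercivity (respectively, Gårding identity) for $a_h(\cdot,\cdot)$ that was just established, together with the obvious boundedness of $a_h(\cdot,\cdot)$ in the $\lVert \cdot \rVert_h$ norm (which follows from Cauchy--Schwarz applied term-by-term to the definition). Throughout, I would fix an arbitrary $v_h \in \mathring{V}_h$, estimate $\lVert u_h - v_h \rVert_h$, and then conclude via the triangle inequality $\lVert u - u_h \rVert_h \leq \lVert u - v_h \rVert_h + \lVert u_h - v_h \rVert_h$ and the infimum over $v_h \in \mathring{V}_h$.

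For the coercive case (including $\alpha = 0$ when $\Omega^c$ is connected), I would start from $\min(1,\alpha)\lVert u_h - v_h \rVert_h^2 \leq a_h(u_h - v_h, u_h - v_h)$ and split
\begin{equation*}
a_h(u_h - v_h, u_h - v_h) = a_h(u - v_h, u_h - v_h) + a_h(u_h - u, u_h - v_h).
\end{equation*}
The first summand is controlled by boundedness of $a_h$ as $\lesssim \lVert u - v_h \rVert_h \lVert u_h - v_h \rVert_h$. The second summand, since $u_h - v_h \in \mathring{V}_h$, is bounded by the supremum in \eqref{e:strang_coercive} multiplied by $\lVert u_h - v_h \rVert_h$. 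Dividing by $\lVert u_h - v_h \rVert_h$ and applying the triangle inequality yields \eqref{e:strang_coercive}.

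For the Gårding case ($\alpha \leq 0$), I would instead start from the identity
\begin{equation*}
\lVert u_h - v_h \rVert_h^2 = a_h(u_h - v_h, u_h - v_h) + \bigl(\lvert \alpha \rvert + 1\bigr) \lVert u_h - v_h \rVert_\Omega^2.
\end{equation*}
The first term is handled exactly as above. For the extra $L^2$ term, I would use $\lVert u_h - v_h \rVert_\Omega \leq \lVert u - u_h \rVert_\Omega + \lVert u - v_h \rVert_\Omega$ together with $\lVert u - v_h \rVert_\Omega \leq \lVert u - v_h \rVert_h$. Collecting everything, applying Young's inequality to absorb one factor of $\lVert u_h - v_h \rVert_h$ into the left-hand side, and invoking the triangle inequality produces \eqref{e:strang_garding} with the extra $\lVert u - u_h \rVert_\Omega$ term.

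No step is genuinely difficult; the only real bookkeeping item is making sure in the Gårding case that the mixed product $\lVert u - v_h \rVert_h \lVert u_h - v_h \rVert_h$ (from bounding $a_h$) and the quadratic $\lVert u_h - v_h \rVert_h$-factor arising from the supremum term can both be absorbed via Young's inequality without introducing a factor that degrades when taking the infimum over $v_h$. This is the same bookkeeping carried out in Lemmas~3.5 and 3.6 of \citet{BrLiSu2008}, and nothing in our setting obstructs it.
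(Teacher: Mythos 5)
Your argument is correct and is essentially the proof the paper intends: the paper simply defers to Lemmas~3.5 and 3.6 of \citet{BrLiSu2008}, which carry out exactly this standard Strang-type splitting of $a_h(u_h-v_h,u_h-v_h)$ using coercivity (resp.\ the G\aa rding identity), boundedness of $a_h(\cdot,\cdot)$ in $\lVert\cdot\rVert_h$, Young's inequality in the indefinite case, and the triangle inequality. No further comment is needed.
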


We will proceed by estimating the two terms on the right-hand side of
\eqref{e:strang_coercive}, which correspond to approximation error and
consistency error, respectively.

\begin{lemma}
  \label{l:energy_approx}
  If $u \in \bigl[ V ^{ s + 1 } _{ 1 - \mu } (\Omega) \bigr] ^2 $ and
  $ \nabla \cdot u , \nabla \times u \in V ^s _{ \mu -1 } (\Omega) $
  with $ s \leq k $, then
  \begin{equation*}
    \inf _{ v _h \in \mathring{ V } _h } \lVert u - v _h \rVert _h \leq \lVert u - \Pi _h u \rVert _h \lesssim h ^s \bigl( \lVert u \rVert _{ s + 1, 1 - \mu } + \lVert \nabla \cdot u \rVert _{ s, \mu -1 } + \lVert \nabla \times u \rVert _{ s , \mu -1 } \bigr) .
  \end{equation*}
\end{lemma}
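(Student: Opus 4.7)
The plan has two parts: the easy left inequality and the real work, which is the right inequality. First, I would verify that $\Pi_h u \in \mathring{V}_h$, which immediately gives $\inf_{v_h \in \mathring{V}_h} \lVert u - v_h \rVert_h \leq \lVert u - \Pi_h u \rVert_h$. Since $u \in H(\operatorname{div};\Omega) \cap \mathring{H}(\operatorname{curl};\Omega)$, its normal and tangential traces are single-valued, so $u$ has a single-valued full trace across interior edges; combined with the moment-preservation property \eqref{e:bsm_projection}, the degree-$\leq k-1$ edge moments of $\Pi_h u$ match across interior edges, and the tangential boundary moments vanish by $u \times n = 0$. This puts $\Pi_h u$ in $\mathring{V}_h$.

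Next, I would expand $\lVert u - \Pi_h u \rVert_h^2$ into its five pieces and bound each by $h^{2s}$ times the appropriate weighted norm squared. The edge terms are essentially free: bounding the Frobenius norm of the jump by $|(u - \Pi_h u)^+| + |(u - \Pi_h u)^-|$, the jump term on $\mathcal{E}_h^\circ$ becomes $\sum_K \sum_{e \subset \partial K} \gamma_e \lVert u - \Pi_h u\rVert_e^2$, and \cref{l:penalty_approx} gives the bound $h^{2s} \lVert u \rVert_{s+1, 1-\mu}^2$ after summation over $K$. The boundary term on $\mathcal{E}_h^\partial$ is handled the same way.

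For the divergence and curl terms, I would invoke the commuting property (\cref{l:commuting_projection}) to write $\nabla \cdot (u - \Pi_h u) = (I - P_h)(\nabla \cdot u)$ and similarly for curl, then bound $\lVert (I - P_h)(\nabla \cdot u) \rVert_K$ elementwise. For triangles $K$ away from every corner, $\nabla \cdot u \in H^s(K)$ with $|\nabla \cdot u|_{s,K} \sim \lVert \nabla \cdot u \rVert_{s,\mu-1,K}$ (since $r \sim 1$ on $K$), and standard Bramble--Hilbert yields $h_K^s |\nabla \cdot u|_{s,K}$. For a corner element, I would use the scaling argument that underlies \cref{l:consistency_approx}: after pulling back to a reference element, the combined factors $r \sim h_K$ yield $\lVert (I - P_h)(\nabla \cdot u) \rVert_K \lesssim h_K^s \lVert \nabla \cdot u \rVert_{s,\mu-1,K}$ via a Deny--Lions-type argument in the weighted space (this is the step in the proof that requires the most care, and is what makes the weighted regularity essential). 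Summing over $K$ gives the $h^{2s}(\lVert \nabla \cdot u \rVert_{s,\mu-1}^2 + \lVert \nabla \times u \rVert_{s,\mu-1}^2)$ contribution.

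Finally, the $L^2$ piece $\lVert u - \Pi_h u \rVert_\Omega^2$ is handled analogously: since $[P_k(K)]^2 \subset \textit{BSM}_k(K)$, the projection $\Pi_h$ preserves polynomials of degree $\leq k$, so a Bramble--Hilbert argument gives $\lVert u - \Pi_h u \rVert_K \lesssim h_K^{s+1} |u|_{s+1,K}$ away from corners, while at corner elements the same scaling trick used in \cref{l:penalty_approx} yields $h_K^{s+1} \lVert u \rVert_{s+1,1-\mu,K}$. Since $h_K \leq h$ is bounded, this contribution is even $O(h^{2(s+1)})$ and is dominated by $h^{2s} \lVert u \rVert_{s+1,1-\mu}^2$. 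Combining all five estimates and taking square roots gives the claim. The main potential obstacle is confirming the weighted Bramble--Hilbert estimate at corner elements, but this follows the same reference-element scaling template already used in \cref{l:penalty_approx,l:consistency_approx} and can be dispatched quickly.
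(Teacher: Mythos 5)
Your proposal follows the same overall route as the paper: $\Pi_h u \in \mathring{V}_h$ for the first inequality, the commuting property of \cref{l:commuting_projection} for the div and curl terms, \cref{l:penalty_approx} for the penalty terms after reducing the jump to per-element edge contributions, and polynomial approximation for the $L^2$ term. Two details deserve correction, though neither sinks the argument. First, your claimed corner-element bound $\lVert u - \Pi_h u \rVert_K \lesssim h_K^{s+1} \lVert u \rVert_{s+1,1-\mu,K}$ is false when $\mu_\ell < 1$: on an element touching a reentrant corner, $u$ only lies in $\bigl[H^{s+\mu_\ell}(K)\bigr]^2$, and a test function behaving like $r^{s+\mu_\ell}$ shows the achievable rate relative to the weighted norm is $h_K^{s+\mu_\ell}$, not $h_K^{s+1}$. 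The paper instead uses the global inclusion $V^{s+1}_{1-\mu}(\Omega) \subset H^{s+\mu_{\min}}(\Omega)$ to get the uniform rate $h_K^{s+\mu_{\min}}$ on every element, which still exceeds the needed $h_K^s$ since $\mu_{\min} > 0$; your conclusion survives once the intermediate claim is weakened accordingly. Second, you have the difficulty located in the wrong place for the div/curl terms: because $\mu - 1 \leq 0$, the weight $r^{\mu-1-s+\lvert\beta\rvert}$ is bounded below on $\Omega$, so $V^s_{\mu-1}(\Omega) \subset H^s(\Omega)$ globally and the ordinary unweighted Bramble--Hilbert estimate applies on every element, corner or not, with $\lvert \nabla\cdot u\rvert_{s,K} \lesssim \lVert \nabla\cdot u\rVert_{s,\mu-1,K}$ following trivially; no weighted Deny--Lions argument is needed there. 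The genuinely delicate corner analysis lives entirely in the penalty term, i.e., in \cref{l:penalty_approx}, which you correctly invoke as a black box.
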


\begin{proof}
  The first inequality holds since the BSM projection maps
  $ u \in \bigl[ H ^\sigma (\Omega) \bigr] ^2 $ with
  $ \sigma > \frac{1}{2} $ to $ \Pi _h u \in \mathring{ V } _h
  $. Next, letting $ \mu_{\min} \coloneqq \min_\ell \mu _\ell $, the
  continuous inclusion
  $ V ^{ s + 1 } _{ 1 - \mu } (\Omega) \subset H ^{ s + \mu_{\min}}
  (\Omega) $ implies that
  $ u \in \bigl[ H ^{ s + \mu_{\min} } (\Omega) \bigr] ^2 $, so
  polynomial approximation theory gives
  \begin{equation}
    \label{e:l2_approx}
    \lVert u - \Pi _h u \rVert _K ^2 \lesssim h _K ^{2 (s + \mu _{\min} ) }  \lvert u \rvert _{ s + \mu_{\min} , K } ^2 \lesssim h _K ^{2 (s + \mu _{\min} ) }  \lVert  u \rVert _{ s + 1, 1 - \mu , K } ^2 .
  \end{equation}
  Furthermore, \cref{l:commuting_projection} and
  $ V ^s _{ \mu -1 } (\Omega) \subset H ^s (\Omega) $ imply
  \begin{subequations}
    \label{e:div_curl_approx}
    \begin{alignat}{3}
      \label{e:div_approx}
      \bigl\lVert \nabla \cdot ( u - \Pi _h u ) \bigr\rVert _K ^2 &= \bigl\lVert ( \nabla \cdot u ) - P _h ( \nabla \cdot u ) \bigr\rVert _K ^2 &&\lesssim h _K ^{2 s } \lvert \nabla \cdot u \rvert _{ s, K } ^2 &&\lesssim h _K ^{2 s } \lVert  \nabla \cdot u \rVert _{ s, \mu -1 , K } ^2, \\
      \label{e:curl_approx}
      \bigl\lVert \nabla \times ( u - \Pi _h u ) \bigr\rVert _K ^2 &= \bigl\lVert ( \nabla \times u ) - P _h ( \nabla \times u ) \bigr\rVert _K ^2 &&\lesssim h _K ^{2 s } \lvert \nabla \times u \rvert _{ s, K } ^2 &&\lesssim h _K ^{2 s } \lVert  \nabla \times u \rVert _{ s, \mu -1 , K } ^2  .
    \end{alignat}
  \end{subequations}
  It remains to estimate the contributions from the penalty
  terms. Observe that, for $ e \in \mathcal{E} _h ^\circ $,
  \begin{equation*}
    \bigl\langle \gamma \llbracket u - \Pi _h u \rrbracket , \llbracket u - \Pi _h u \rrbracket \bigr\rangle _e \leq 2 \Bigl( \bigl\langle \gamma ( u - \Pi _h u ) , u - \Pi _h u \bigr\rangle _{ e ^+ } + \bigl\langle \gamma ( u - \Pi _h u ) , u - \Pi _h u \bigr\rangle _{ e ^- } \Bigr),
  \end{equation*}
  by the parallelogram identity. Therefore,
  \begin{equation*}
    \bigl\langle \gamma \llbracket u - \Pi _h u \rrbracket , \llbracket u - \Pi _h u \rrbracket \bigr\rangle _{ \mathcal{E} _h ^\circ } + \bigl\langle \gamma ( u - \Pi _h u ) \times n , ( u - \Pi _h u ) \times n \bigr\rangle _{ \mathcal{E} _h ^\partial } \leq 2 \bigl\langle \gamma ( u - \Pi _h u ) , u - \Pi _h u \bigr\rangle _{ \partial \mathcal{T} _h } ,
  \end{equation*}
  so it suffices to estimate the contribution from each
  $ K \in \mathcal{T} _h $ and $ e \subset \partial K $. By
  \cref{l:penalty_approx},
  \begin{equation}
    \label{e:penalty_approx}
    \bigl\langle \gamma ( u - \Pi _h u ) , ( u - \Pi _h u ) \bigr\rangle _{ \partial K } \lesssim h _K ^{ 2 s } \lVert u \rVert _{ s + 1, 1 - \mu , K } ^2 .
  \end{equation}
  Finally, combining \eqref{e:l2_approx}, \eqref{e:div_curl_approx},
  and \eqref{e:penalty_approx} and summing over
  $ K \in \mathcal{T} _h $ completes the proof.  
\end{proof}

\begin{lemma}
  \label{l:consistency}
  Suppose $u$ satisfies \eqref{e:primal} and $ u _h $ satisfies
  \eqref{e:one-field}. If
  $ \nabla \cdot u , \nabla \times u \in V ^s _{ \mu -1 } (\Omega) $
  with $ s \leq k $, then
  \begin{equation*}
    \sup _{ 0 \neq w _h \in \mathring{ V } _h } \frac{ a _h ( u - u _h , w _h ) }{ \lVert w _h \rVert _h } \lesssim h ^s \bigl( \lVert \nabla \cdot u \rVert _{ s, \mu -1 } + \lVert \nabla \times u \rVert _{ s , \mu -1 } \bigr) .
  \end{equation*}  
\end{lemma}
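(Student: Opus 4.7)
The plan is to first establish the Galerkin-orthogonality-type identity
\begin{equation*}
a _h ( u - u _h , w _h ) = \sum _{ K \in \mathcal T _h } \langle \nabla \cdot u , w _h \cdot n \rangle _{ \partial K } - \sum _{ K \in \mathcal T _h } \langle \nabla \times u , w _h \times n \rangle _{ \partial K } ,
\end{equation*}
then reduce each right-hand sum using the nonconforming moment conditions defining $ \mathring V _h $, and finally conclude with \cref{l:consistency_approx}. The identity itself follows by summing the elementwise integration-by-parts identity \eqref{e:ibp} over $ K \in \mathcal T _h $ and subtracting $ a _h ( u _h , w _h ) = ( f , w _h ) _{ \mathcal T _h } $: since $ u \in H ( \operatorname{div} ; \Omega ) \cap \mathring H ( \operatorname{curl} ; \Omega ) $ has single-valued traces on $ \mathcal E _h ^\circ $ and vanishing tangential trace on $ \mathcal E _h ^\partial $, every penalty contribution involving $u$ vanishes.

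The conceptual core is to exploit the definition of $ \mathring V _h $. Writing $ b _h ( w _h , q _h ) = 0 $ explicitly for $ q _h \in \mathring Q _h $ (recalling that $ \llbrace q _h \rrbrace = 0 $ on $ \mathcal E _h ^\circ $ and $ q _h \cdot n = 0 $ on $ \mathcal E _h ^\partial $), one extracts the following scalar moment conditions: on each interior edge $e$ both $ w _h ^+ \cdot n ^+ + w _h ^- \cdot n ^- $ and $ w _h ^+ \times n ^+ + w _h ^- \times n ^- $ are $ L ^2 $-orthogonal to $ P _{ k -1 } (e) $, while on each boundary edge $ w _h \times n $ is $ L ^2 $-orthogonal to $ P _{ k -1 } (e) $. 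For the divergence sum, $ \sigma \coloneqq \nabla \cdot u \in \mathring H ^1 (\Omega) $ vanishes on $ \partial \Omega $, so boundary contributions drop out; on each interior edge $ e = \partial K ^+ \cap \partial K ^- $, the moment condition lets me replace $ \sigma $ by $ \sigma - P _h \sigma \rvert _{ K ^+ } $ without changing the pairing, since $ P _h \sigma \rvert _{ K ^+ } \rvert _e \in P _{ k -1 } (e) $. The curl sum is handled analogously: the tangential-moment condition, valid on both types of edge, permits replacing $ \tau \coloneqq \nabla \times u $ by $ \tau - P _h \tau $ edgewise.

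To close, I would apply a weighted Cauchy--Schwarz in the weight $ \gamma _e = \bigl[ \Phi _\mu (e) \bigr] ^2 / \lvert e \rvert $, together with the bounds $ \lVert w _h ^+ \cdot n ^+ + w _h ^- \cdot n ^- \rVert _e + \lVert w _h ^+ \times n ^+ + w _h ^- \times n ^- \rVert _e \lesssim \lVert \llbracket w _h \rrbracket \rVert _e $ on interior edges and the analogous bound $ \lVert w _h \times n \rVert _e $ controlled by the boundary penalty, to obtain
\begin{equation*}
\lvert a _h ( u - u _h , w _h ) \rvert \lesssim \Bigl( \sum _{ K \in \mathcal T _h } \sum _{ e \subset \partial K } \frac{ \lvert e \rvert }{ \bigl[ \Phi _\mu (e) \bigr] ^2 } \bigl( \lVert \sigma - P _h \sigma \rVert _e ^2 + \lVert \tau - P _h \tau \rVert _e ^2 \bigr) \Bigr) ^{ 1 / 2 } \lVert w _h \rVert _h .
\end{equation*}
Invoking \cref{l:consistency_approx} with $ \eta = \sigma $ and $ \eta = \tau $ bounds each summand by $ h _K ^{ 2 s } \bigl( \lVert \sigma \rVert _{ s , \mu -1 , K } ^2 + \lVert \tau \rVert _{ s , \mu -1 , K } ^2 \bigr) $, and summing over $ K $ produces the desired $ h ^s $ estimate.

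The main obstacle I anticipate is the middle step: translating the single constraint $ b _h ( w _h , q _h ) = 0 $ for all $ q _h \in \mathring Q _h $ into the separate scalar moment orthogonalities on interior and boundary edges required to justify the edgewise subtraction of a polynomial of degree $ \leq k -1 $. The fact that \cref{l:consistency_approx} is stated in terms of the bulk projection $ P _h $ is what makes subtracting $ P _h \sigma \rvert _{ K ^+ } $ (a polynomial of degree $ \leq k -1 $ when restricted to $e$) legitimate here, rather than needing a separate edge projection. Once this is in place, the remainder is a standard weighted Cauchy--Schwarz estimate combined with a direct appeal to the preliminary lemma.
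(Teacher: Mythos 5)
Your proposal is correct and follows essentially the same route as the paper: the same consistency identity obtained by subtracting \eqref{e:one-field} from the elementwise integration by parts \eqref{e:ibp}, the same use of the moment conditions defining $ \mathring{V}_h $ to insert $ P_h (\nabla \cdot u) $ and $ P_h (\nabla \times u) $ edgewise (the paper likewise takes the restriction of the bulk projection from either adjacent triangle), and the same weighted Cauchy--Schwarz with $ \gamma^{\pm 1/2} $ followed by \cref{l:consistency_approx} and the energy-norm control of the jump terms. The step you flag as a potential obstacle does go through exactly as you describe, so no gap remains.
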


\begin{proof}
  Subtracting \eqref{e:one-field} from \eqref{e:ibp} with
  $ v = v _h = w _h \in \mathring{ V } _h $, we get
  \begin{align}
    a _h ( u - u _h , w _h )
    &= \langle \nabla \cdot u , w _h \cdot n \rangle _{ \partial \mathcal{T} _h } - \langle \nabla \times u , w _h \times n \rangle _{ \partial \mathcal{T} _h } \notag \\
    &= \bigl\langle \nabla \cdot u , \llbracket w _h \cdot n \rrbracket \bigr\rangle _{ \mathcal{E} _h ^\circ } - \bigl\langle \nabla \times u , \llbracket w _h \times n \rrbracket \bigr\rangle _{ \mathcal{E} _h ^\circ } - \langle \nabla \times u , w _h \times n \rangle _{ \mathcal{E} _h ^\partial } \label{e:consistency_jump},
  \end{align}
  where we have denoted the normal and tangential jump components on
  $ e \in \mathcal{E} _h ^\circ $ by
  \begin{equation*}
    \llbracket w _h \cdot n \rrbracket _e \coloneqq w _h ^+ \cdot n ^+ + w _h ^- \cdot n ^- , \qquad \llbracket w _h \times n \rrbracket _e \coloneqq w _h ^+ \times n ^+ + w _h ^- \times n ^- .
  \end{equation*}
  The condition $ w _h \in \mathring{ V } _h $ says that
  $ \llbracket w _h \cdot n \rrbracket _e $ and
  $ \llbracket w _h \times n \rrbracket _e $ are each
  $ L ^2 $-orthogonal to $ P _{ k -1 } (e) $ for
  $ e \in \mathcal{E} _h ^\circ $, and that $ w _h \times n \rvert _e $ is
  $ L ^2 $-orthogonal to $ P _{ k -1 } (e) $ for
  $ e \in \mathcal{E} _h ^\partial $. Therefore, letting $ P _h $ be
  projection onto either triangle $ K ^\pm $ containing
  $ e \in \mathcal{E} _h ^\circ $, since
  $ P _h ( \nabla \cdot u ) \rvert _e \in P _{ k -1 } (e) $, it
  follows that
  \begin{align*}
    \bigl\langle \nabla \cdot u , \llbracket w _h \cdot n \rrbracket \bigr\rangle _{ \mathcal{E} _h ^\circ }  
    &= \bigl\langle \nabla \cdot u - P _h ( \nabla \cdot u ) , \llbracket w _h \cdot n \rrbracket \bigr\rangle _{ \mathcal{E}_h^\circ } \\
    &= \Bigl\langle \gamma ^{ - 1/2 } \bigl[ \nabla \cdot u - P _h ( \nabla \cdot u ) \bigr] , \gamma ^{ 1/2 } \llbracket w _h \cdot n \rrbracket \Bigr\rangle _{ \mathcal{E}_h^\circ } \\
    &\leq \Bigl\lVert \gamma ^{ - 1/2 } \bigl[ \nabla \cdot u - P _h ( \nabla \cdot u ) \bigr] \Bigr\rVert _{ \mathcal{E} _h ^\circ } \Bigl\lVert \gamma ^{ 1/2 } \llbracket w _h \cdot n \rrbracket \Bigr\rVert _{ \mathcal{E} _h ^\circ } ,
  \end{align*}
  where the last step uses the Cauchy--Schwarz inequality. Applying
  \cref{l:consistency_approx} with $ \eta = \nabla \cdot u $ to the
  first term and the definition of the energy norm to the second, we
  conclude that
  \begin{equation*}
    \bigl\langle \nabla \cdot u , \llbracket w _h \cdot n \rrbracket \bigr\rangle _{ \mathcal{E} _h ^\circ } \lesssim h ^s \bigl\lVert \nabla \cdot u \rVert _{ s, \mu -1 } \lVert w _h \rVert _h .
  \end{equation*}
  Similarly,
  \begin{equation*}
- \bigl\langle \nabla \times u , \llbracket w _h \times n \rrbracket \bigr\rangle _{ \mathcal{E} _h ^\circ } - \langle \nabla \times u , w _h \times n \rangle _{ \mathcal{E} _h ^\partial } \lesssim h ^s \lVert \nabla \times u \rVert _{ s, \mu -1 } \lVert w _h \rVert _h ,    
  \end{equation*}
  and the result follows.
\end{proof}

Next, we use a duality argument to control the error in the $ L ^2 $
norm.

\begin{lemma}
  \label{l:duality}
  Suppose $u$ satisfies \eqref{e:primal} and $ u _h $ satisfies
  \eqref{e:one-field}. Suppose also that
  $ \nabla \cdot u , \nabla \times u \in V ^s _{ \mu -1 } (\Omega) $
  with $ s \leq k $, and let $ t < \mu_{\min} $. If \eqref{e:primal}
  is well-posed, then
  \begin{equation*}
    \lVert u - u _h \rVert _\Omega \lesssim h ^{ s + t } \bigl( \lVert \nabla \cdot u \rVert _{ s, \mu -1 } + \lVert \nabla \times u \rVert _{ s , \mu -1 } \bigr) + h ^t \lVert u - u _h \rVert _h 
  \end{equation*}
\end{lemma}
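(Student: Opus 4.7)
The plan is to run an Aubin--Nitsche duality argument adapted to the nonconformity of the scheme. Setting $e := u - u_h$ and using that the primal problem \eqref{e:primal} is well-posed (so, by symmetry of $a$, the dual problem is too), I introduce $z \in H(\operatorname{div}; \Omega) \cap \mathring{H}(\operatorname{curl}; \Omega)$ solving $a(z, v) = (e, v)_\Omega$ for all admissible test functions $v$. Applying \cref{c:u_reg} to $z$ (with $t$ in place of $s$ and $e$ in place of $f$) yields the regularity and stability estimate $\lVert z \rVert_{t + 1, 1 - \mu} + \lVert \nabla \cdot z \rVert_{t, \mu - 1} + \lVert \nabla \times z \rVert_{t, \mu - 1} \lesssim \lVert e \rVert_\Omega$, and the strong form of the dual problem supplies the natural boundary condition $\nabla \cdot z = 0$ on $\partial \Omega$, mirroring \eqref{e:strong_bc_natural} for $u$.

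Next, I rewrite $\lVert e \rVert_\Omega^2$ in terms of $a_h$. Applying \eqref{e:ibp} element-by-element with $(z, e)$ in place of $(u, v)$, summing over $\mathcal{T}_h$, and noting that the penalty contributions to $a_h(z, e)$ vanish because $\llbracket z \rrbracket = 0$ on $\mathcal{E}_h^\circ$ and $z \times n = 0$ on $\mathcal{E}_h^\partial$, I obtain
\begin{equation*}
  \lVert e \rVert_\Omega^2 = a_h(z, e) - \langle \nabla \cdot z, e \cdot n \rangle_{\partial \mathcal{T}_h} + \langle \nabla \times z, e \times n \rangle_{\partial \mathcal{T}_h} .
\end{equation*}
Since $\Pi_h z \in \mathring{V}_h$, I split $a_h(z, e) = a_h(z - \Pi_h z, e) + a_h(\Pi_h z, e)$ and, invoking the consistency identity from the proof of \cref{l:consistency}, rewrite $a_h(\Pi_h z, e) = \langle \nabla \cdot u, \Pi_h z \cdot n \rangle_{\partial \mathcal{T}_h} - \langle \nabla \times u, \Pi_h z \times n \rangle_{\partial \mathcal{T}_h}$.

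The proof then reduces to estimating three sorts of terms. The term $a_h(z - \Pi_h z, e)$ is bounded by $\lVert z - \Pi_h z \rVert_h \lVert e \rVert_h$; \cref{l:energy_approx} applied to $z$ (with $s$ replaced by $t$) together with the dual regularity yields $\lVert z - \Pi_h z \rVert_h \lesssim h^t \lVert e \rVert_\Omega$, producing the $h^t \lVert u - u_h \rVert_h$ contribution. For the remaining cross edge sums, I separate interior and boundary contributions: all divergence boundary integrals vanish because $\nabla \cdot u = \nabla \cdot z = 0$ on $\partial \Omega$, while the curl boundary integrals are handled using $u \times n = z \times n = 0$ together with the vanishing of the degree $\leq k - 1$ moments of $\Pi_h z \times n$ and $u_h \times n$ on $\mathcal{E}_h^\partial$. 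On interior edges, the single-valuedness of $\nabla \cdot u$, $\nabla \times u$, $u$, and $z$ lets me rewrite each sum over $\partial K$ as a sum over $\mathcal{E}_h^\circ$ paired with $\llbracket \Pi_h z \cdot n \rrbracket$, $\llbracket u_h \cdot n \rrbracket$, or the tangential analogues.

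The main technical obstacle is the pairing of projections and jumps in these interior sums. Because $\Pi_h z, u_h \in \mathring{V}_h$, their jumps are $L^2$-orthogonal to $P_{k - 1}(e)$, so I may subtract an element-wise $L^2$-projection $P_h$ of $\nabla \cdot u$ or $\nabla \cdot z$ (and the curl analogues) on one side before applying a $\gamma^{\pm 1/2}$-weighted Cauchy--Schwarz inequality. With the weights allocated correctly, \cref{l:consistency_approx} supplies an $h^s$ or $h^t$ factor on the projection side, and \cref{l:penalty_approx} (or the definition of $\lVert \cdot \rVert_h$) supplies the complementary factor on the jump side. Terms carrying $\Pi_h z$ yield $h^{s + t} (\lVert \nabla \cdot u \rVert_{s, \mu - 1} + \lVert \nabla \times u \rVert_{s, \mu - 1}) \lVert e \rVert_\Omega$, while those carrying $u_h$ yield $h^t \lVert e \rVert_\Omega \lVert e \rVert_h$. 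Collecting everything and dividing through by $\lVert e \rVert_\Omega$ gives the claim.
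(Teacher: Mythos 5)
Your proposal is correct and follows essentially the same route as the paper: the same dual problem with regularity from \cref{c:u_reg}, the same elementwise integration by parts, the same splitting $a_h(z,e)=a_h(z-\Pi_h z,e)+a_h(\Pi_h z,e)$ with the consistency identity, and the same $\gamma^{\pm 1/2}$-weighted Cauchy--Schwarz combined with \cref{l:consistency_approx,l:energy_approx} after inserting the $P_h$ projections and using the moment-orthogonality of jumps of $\mathring{V}_h$ functions. The only cosmetic differences are that you phrase the boundary identity in terms of $e$ rather than $u_h$ and make explicit the natural boundary condition $\nabla\cdot z=0$ on $\partial\Omega$, which the paper uses implicitly.
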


\begin{proof}
  Let
  $ z \in H ( \operatorname{div}; \Omega ) \cap \mathring{ H } (
  \operatorname{curl}; \Omega ) $ be the solution to
  \begin{equation}
    \label{e:dual}
    ( \nabla \cdot v, \nabla \cdot z ) _\Omega + ( \nabla \times v, \nabla \times z ) _\Omega + \alpha ( v, z ) _\Omega = ( v, u - u _h ) _\Omega ,
  \end{equation}
  for all
  $ v \in H ( \operatorname{div}; \Omega ) \cap \mathring{ H } (
  \operatorname{curl}; \Omega ) $. By \cref{c:u_reg},
  $ z \in \bigl[ V ^{ t + 1 } _{ 1 - \mu } (\Omega) \bigr] ^2 $ and
  $ \nabla \cdot z , \nabla \times z \in V ^t _{ \mu -1 } (\Omega) $,
  and we have the stability estimate
  \begin{equation}
    \label{e:z_stability}
    \lVert z \rVert _{ t + 1 , 1 - \mu } + \lVert \nabla \cdot z \rVert _{ t, \mu -1 } + \lVert \nabla \times z \rVert _{ t, \mu -1 } \lesssim \lVert u - u _h \rVert _\Omega .
  \end{equation}
  Hence, \cref{l:energy_approx} implies
  \begin{equation}
    \label{e:z_approx}
    \lVert z - \Pi _h z \rVert _h \lesssim h ^t \lVert u - u _h \rVert _\Omega .
  \end{equation}
  To express $ \lVert u - u _h \rVert _\Omega ^2 $ in terms of $z$, we
  would like to take $ v = u - u _h $ in \eqref{e:dual}, but we cannot
  do so since generally
  $ u _h \notin H ( \operatorname{div}; \Omega ) \cap \mathring{ H } (
  \operatorname{curl} ; \Omega ) $. Instead, integrating by parts as
  in \eqref{e:ibp} gives
  \begin{align}
    \lVert u - u _h \rVert _\Omega ^2
    &= a _h ( u - u _h , z ) + \langle u _h \cdot n , \nabla \cdot z \rangle _{ \partial \mathcal{T} _h } - \langle u _h \times n , \nabla \times z \rangle _{ \partial \mathcal{T} _h }, \notag\\
    &= a _h ( u - u _h , z ) + \bigl\langle \llbracket u _h \cdot n \rrbracket , \nabla \cdot z \bigr\rangle _{ \mathcal{E} _h ^\circ } - \bigl\langle \llbracket u _h \times n \rrbracket , \nabla \times z \bigr\rangle _{ \mathcal{E} _h ^\circ } - \langle u _h \times n , \nabla \times z \rangle _{ \mathcal{E} _h ^\partial }, \label{e:l2_error}
  \end{align}
  which we will estimate term-by-term.

  For the first term of \eqref{e:l2_error}, we write
  \begin{equation*}
    a _h ( u - u _h , z ) = a _h ( u - u _h , z - \Pi _h z ) + a _h ( u - u _h , \Pi _h z ) .
  \end{equation*}
  By the boundedness of $ a _h ( \cdot , \cdot ) $ in the energy norm
  and \eqref{e:z_approx}, we have
  \begin{equation*}
    a _h ( u - u _h , z - \Pi _h z ) \lesssim h ^t \lVert u - u _h \rVert _h \lVert u - u _h \rVert _\Omega .
  \end{equation*}
  Next, by \eqref{e:consistency_jump} with
  $ w _h = \Pi _h z \in \mathring{ V } _h $, we have
  \begin{equation*}
    a _h ( u - u _h , \Pi _h z ) = \bigl\langle \nabla \cdot u , \llbracket \Pi _h z \cdot n \rrbracket \bigr\rangle _{ \mathcal{E} _h ^\circ } - \bigl\langle \nabla \times u, \llbracket \Pi _h z \times n \rrbracket \bigr\rangle _{ \mathcal{E} _h ^\circ } - \langle \nabla \times u , \Pi _h z \times n \rangle _{ \mathcal{E} _h ^\partial } .
  \end{equation*}
  By a similar argument to that used in \cref{l:consistency}, along
  with the fact that $ \llbracket z \cdot n \rrbracket = 0 $,
  \begin{align*}
    \bigl\langle \nabla \cdot u , \llbracket \Pi _h z \cdot n \rrbracket \bigr\rangle _{ \mathcal{E} _h ^\circ }
    &= \Bigl\langle \nabla \cdot u - P _h ( \nabla \cdot u ) , \bigl\llbracket (\Pi _h z - z ) \cdot n \bigr\rrbracket \Bigr\rangle _{ \mathcal{E} _h ^\circ } \\
    &= \Bigl\langle \gamma ^{ -1/2 } \bigl[ \nabla \cdot u - P _h ( \nabla \cdot u ) \bigr] , \gamma ^{ 1/2 } \bigl\llbracket (\Pi _h z - z ) \cdot n \bigr\rrbracket  \Bigr\rangle _{ \mathcal{E} _h ^\circ } \\
    &\leq \Bigl\lVert \gamma ^{ -1/2 } \bigl[ \nabla \cdot u - P _h ( \nabla \cdot u ) \bigr] \Bigr\rVert _{ \mathcal{E} _h ^\circ } \Bigl\lVert \gamma ^{ 1/2 } \bigl\llbracket ( z - \Pi _h z ) \cdot n \bigr\rrbracket \Bigr\rVert _{ \mathcal{E} _h ^\circ } \\
    &\lesssim h ^s \lVert \nabla \cdot u \rVert _{ s, \mu -1 } \lVert z - \Pi _h z \rVert _h \\
    &\lesssim h ^{s + t} \lVert \nabla \cdot u \rVert _{ s, \mu -1 } \lVert u - u _h \rVert _\Omega ,
  \end{align*}
  where the last two lines use \cref{l:consistency_approx} with
  $ \eta = \nabla \cdot u $ and \eqref{e:z_approx}. Similarly,
  \begin{equation*}
    - \bigl\langle \nabla \times u, \llbracket \Pi _h z \times n \rrbracket \bigr\rangle _{ \mathcal{E} _h ^\circ } - \langle \nabla \times u , \Pi _h z \times n \rangle _{ \mathcal{E} _h ^\partial } \lesssim h ^{ s + t } \lVert \nabla \times u \rVert _{ s , \mu -1 } \lVert u - u _h \rVert _\Omega .
  \end{equation*}
  Thus, we have estimated the first term of \eqref{e:l2_error} by
  \begin{equation}
    \label{e:l2_error_term1}
    a _h ( u - u _h , z ) \lesssim \Bigl[ h ^{ s + t } \bigl( \lVert \nabla \cdot u \rVert _{ s, \mu -1 } + \lVert \nabla \times u \rVert _{ s, \mu -1 } \bigr) + h ^t \lVert u - u _h \rVert _h \Bigr] \lVert u - u _h \rVert _\Omega .
  \end{equation}
  For the remaining terms of \eqref{e:l2_error}, we use a similar
  argument to the one above to get
  \begin{align}
    \bigl\langle \llbracket u _h \cdot n \rrbracket , \nabla \cdot z \bigr\rangle _{ \mathcal{E} _h ^\circ }
    &= \Bigl\langle \bigl\llbracket (u _h - u ) \cdot n \bigr\rrbracket , \nabla \cdot z - P _h ( \nabla \cdot z ) \Bigr\rangle _{ \mathcal{E} _h ^\circ } \notag \\
    &= \Bigl\langle \gamma ^{ 1/2 } \bigl\llbracket ( u _h - u ) \cdot n \bigr\rrbracket, \gamma ^{ -1/2 } \bigl[ \nabla \cdot z - P _h ( \nabla \cdot z ) \bigr]  \Bigr\rangle _{ \mathcal{E} _h ^\circ } \notag \\
    &\leq \Bigl\lVert \gamma ^{ 1/2 } \bigl\llbracket (u - u _h) \cdot n \bigr\rrbracket \Bigr\rVert _{ \mathcal{E} _h ^\circ} \Bigl\lVert \gamma ^{ -1/2 } \bigl[ \nabla \cdot z - P _h ( \nabla \cdot z ) \bigr] \Bigr\rVert _{ \mathcal{E} _h ^\circ} \notag \\
    &\lesssim h ^t \lVert u - u _h \rVert _h \lVert \nabla \cdot z \rVert _{ t, \mu -1 } \notag \\
    &\lesssim h ^t \lVert u - u _h \rVert _h \lVert u - u _h \rVert _\Omega \label{e:l2_error_term2},
  \end{align}
  where the last two lines use \cref{l:consistency_approx} with
  $ \eta = \nabla \cdot z $ and \eqref{e:z_stability}. Likewise,
  \begin{equation}
    - \bigl\langle \llbracket u _h \times n \rrbracket , \nabla \times z \bigr\rangle _{ \mathcal{E} _h ^\circ } - \langle u _h \times n , \nabla \times z \rangle _{ \mathcal{E} _h ^\partial } \lesssim h ^t \lVert u - u _h \rVert _h \lVert \nabla \times z \rVert _{ t, \mu -1 } \lesssim h ^t \lVert u - u _h \rVert _h \lVert u - u _h \rVert _\Omega \label{e:l2_error_term3}.
  \end{equation}
  Altogether, estimating \eqref{e:l2_error} by combining
  \eqref{e:l2_error_term1}, \eqref{e:l2_error_term2}, and
  \eqref{e:l2_error_term3}, we have
  \begin{equation*}
    \lVert u - u _h \rVert _\Omega ^2  \lesssim \Bigl[ h ^{ s + t } \bigl( \lVert \nabla \cdot u \rVert _{ s, \mu -1 } + \lVert \nabla \times u \rVert _{ s, \mu -1 } \bigr) + h ^t \lVert u - u _h \rVert _h \Bigr] \lVert u - u _h \rVert _\Omega ,
  \end{equation*}
  which completes the proof.
\end{proof}

Finally, we are ready to state the main energy and $ L ^2 $ error
estimates.

\begin{theorem}
  \label{t:error_estimates}
  Suppose $u$ satisfies \eqref{e:primal} with
  $ u \in \bigl[ V ^{ s + 1 } _{ 1 - \mu } (\Omega) \bigr] ^2 $ and
  $ \nabla \cdot u, \nabla \times u \in V ^s _{ \mu -1 } (\Omega) $,
  where $ s \leq k $, and let $ t < \mu_{\min} $. If $ \alpha > 0 $,
  then the solution $ u _h $ to \eqref{e:one-field} satisfies the
  error estimates
  \begin{align*}
    \lVert u - u _h \rVert _h &\lesssim h ^s \bigl( \lVert u \rVert _{ s + 1 , 1 - \mu } + \lVert \nabla \cdot u \rVert _{ s, \mu -1 } + \lVert \nabla \times u \rVert _{ s, \mu -1 } \bigr) ,\\
    \lVert u - u _h \rVert _\Omega &\lesssim h ^{ s + t } \bigl( \lVert u \rVert _{ s + 1 , 1 - \mu } + \lVert \nabla \cdot u \rVert _{ s, \mu -1 } + \lVert \nabla \times u \rVert _{ s, \mu -1 } \bigr),
  \end{align*}
  and if the complement of $\Omega$ is connected, then this also holds
  for $ \alpha = 0 $. If $ \alpha < 0 $ and \eqref{e:primal} is
  well-posed, then \eqref{e:one-field} is uniquely solvable for
  sufficiently small $h$, and the solution $ u _h $ satisfies these
  same estimates.
\end{theorem}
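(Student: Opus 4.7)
The plan is to assemble the result from the abstract Strang estimates in \cref{l:strang}, the approximation bound in \cref{l:energy_approx}, the consistency bound in \cref{l:consistency}, and the duality bound in \cref{l:duality}.

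\emph{The coercive case ($\alpha > 0$, or $\alpha = 0$ with the complement of $\Omega$ connected).} The energy estimate is immediate from the coercive Strang bound \eqref{e:strang_coercive} once the approximation infimum and consistency supremum on its right-hand side are estimated using \cref{l:energy_approx} and \cref{l:consistency}, respectively. For the $L^2$ estimate, I would then substitute this energy bound into the $h^t \lVert u - u_h \rVert_h$ term on the right-hand side of \cref{l:duality}, so that the two contributions there balance at rate $h^{s+t}$ (noting $s \leq k$ and $t < \mu_{\min}$).

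\emph{The indefinite case ($\alpha < 0$, with \eqref{e:primal} well-posed).} The main obstacle is discrete well-posedness, since \cref{l:strang,l:duality} both implicitly require that a discrete solution exists. Because $\mathring{V}_h$ is finite-dimensional, it suffices to prove uniqueness, i.e., that any $u_h \in \mathring{V}_h$ satisfying $a_h(u_h, v_h) = 0$ for all $v_h \in \mathring{V}_h$ must vanish for sufficiently small $h$. Such a $u_h$ solves \eqref{e:one-field} with $f = 0$, paired with the continuous solution $u \equiv 0$, so all hypotheses of \cref{l:duality} are trivially met (the weighted norms of $u$ vanish) and it gives $\lVert u_h \rVert_\Omega \lesssim h^t \lVert u_h \rVert_h$ for any $t \in (0, \mu_{\min})$. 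Combining with the G\aa{}rding identity $\lVert u_h \rVert_h^2 = a_h(u_h, u_h) + (\lvert \alpha \rvert + 1)\lVert u_h \rVert_\Omega^2 = (\lvert \alpha \rvert + 1)\lVert u_h \rVert_\Omega^2$ then yields $\lVert u_h \rVert_h^2 \lesssim h^{2t} \lVert u_h \rVert_h^2$, which forces $u_h = 0$ once $h$ is small enough.

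Once discrete well-posedness is established, the G\aa{}rding Strang estimate \eqref{e:strang_garding}, combined with \cref{l:energy_approx}, \cref{l:consistency}, and \cref{l:duality} applied to $u - u_h$, produces $\lVert u - u_h \rVert_h \lesssim h^s \mathcal{R}(u) + h^t \lVert u - u_h \rVert_h$, where $\mathcal{R}(u)$ abbreviates the sum of weighted norms appearing in the theorem statement. Absorbing the last term for small $h$ recovers the energy estimate, and a final application of \cref{l:duality} then yields the $L^2$ estimate, exactly as in the coercive case.
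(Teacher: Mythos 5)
Your proposal is correct and follows essentially the same route as the paper: the Strang estimates combined with the approximation, consistency, and duality lemmas in the coercive case, and Schatz's absorption technique in the indefinite case. The only cosmetic difference is that you establish discrete uniqueness first, directly from the G\aa{}rding identity together with the duality lemma applied to the homogeneous problem, whereas the paper derives the combined estimate for an arbitrary discrete solution and then specializes to $f = 0$ to conclude uniqueness; the underlying mechanism is identical.
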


\begin{proof}
  If $ \alpha > 0 $, or if $ \alpha = 0 $ with $\Omega$ having
  connected complement, then the proof is fairly immediate. The energy
  estimate follows from the abstract estimate
  \eqref{e:strang_coercive} in \cref{l:strang}, together with
  \cref{l:energy_approx,l:consistency}, and the $ L ^2 $ estimate
  follows by \cref{l:duality}.

  When $ \alpha < 0 $ is such that \eqref{e:primal} is well-posed, we
  follow the approach in \citet[Theorem 4.5]{BrLiSu2008}, which uses a
  technique for indefinite problems due to \citet{Schatz1974}. Suppose
  that $ u _h $ satisfies \eqref{e:one-field}.  From the abstract
  estimate \eqref{e:strang_garding} in \cref{l:strang}, along with
  \cref{l:energy_approx,l:consistency,,l:duality}, we have
  \begin{equation}
    \label{e:schatz}
    \lVert u - u _h \rVert _h \leq C \Bigl[ h ^s \bigl( \lVert u \rVert _{ s + 1 , 1 - \mu } + \lVert \nabla \cdot u \rVert _{ s, \mu -1 } + \lVert \nabla \times u \rVert _{ s, \mu -1 } \bigr) + h ^t \lVert u - u _h \rVert _h \Bigr] ,
  \end{equation}
  where the constant $C$ has been made explicit. Now, choose
  $ h _\ast $ small enough that $ C h _\ast ^t < 1 $. It follows that,
  whenever $ h \leq h _\ast $, we may subtract
  $ C h ^t \lVert u - u _h \rVert _h $ from both sides of
  \eqref{e:schatz} to obtain
  \begin{equation*}
    \lVert u - u _h \rVert _h \lesssim h ^s \bigl( \lVert u \rVert _{ s + 1 , 1 - \mu } + \lVert \nabla \cdot u \rVert _{ s , \mu -1 } + \lVert \nabla \times u \rVert _{ s , \mu -1 } \bigr) .
  \end{equation*}
  In particular, when $ f = 0 $, well-posedness of \eqref{e:primal}
  gives the unique solution $ u = 0 $, and
  $ \lVert u _h \rVert _h \lesssim 0 $ implies that
  \eqref{e:one-field} has the unique solution $ u _h = 0 $. Hence,
  \eqref{e:one-field} is uniquely solvable and satisfies the energy
  estimate whenever $ h \leq h _\ast $, and the $ L ^2 $ estimate
  follows by another application of \cref{l:duality}.
\end{proof}

\begin{corollary}[minimum-regularity case]
  If $ \alpha > 0 $, $u$ is the solution to \eqref{e:primal}, and
  $ u _h $ is the solution to \eqref{e:one-field}, then for all
  $ s < \mu_{\min} $, we have the error estimates
  \begin{align*}
    \lVert u - u _h \rVert _h &\lesssim h ^s \lVert f \rVert _\Omega ,\\
    \lVert u - u _h \rVert _\Omega &\lesssim h ^{ 2 s } \lVert f \rVert _\Omega ,
  \end{align*}
  and if the complement of $\Omega$ is connected, then this also holds
  for $ \alpha = 0 $. If $ \alpha < 0 $ and \eqref{e:primal} is
  well-posed, then these estimates hold for sufficiently small $h$.
\end{corollary}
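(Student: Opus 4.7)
The plan is to deduce this corollary as a direct specialization of \cref{t:error_estimates}, using the regularity and stability estimate provided by \cref{c:u_reg}. Since the penalty parameter $\mu$ satisfies $\mu_\ell \leq 1$, the hypothesis $s < \mu_{\min}$ forces $s < 1 \leq k$, so the polynomial degree condition of \cref{t:error_estimates} is automatically met.

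First, I would invoke \cref{c:u_reg} with the given $s < \mu_{\min}$. This immediately yields $u \in \bigl[ V^{s+1}_{1-\mu}(\Omega) \bigr]^2$ together with $\nabla\cdot u, \nabla\times u \in V^s_{\mu-1}(\Omega)$, and, under the well-posedness hypothesis on \eqref{e:primal}, the stability estimate
\begin{equation*}
  \lVert u \rVert_{s+1,1-\mu} + \lVert \nabla\cdot u \rVert_{s,\mu-1} + \lVert \nabla\times u \rVert_{s,\mu-1} \lesssim \lVert f \rVert_\Omega.
\end{equation*}
The cases $\alpha>0$, and $\alpha=0$ with $\Omega$ having connected complement, are both well-posed by the discussion preceding \cref{t:well-posed}; the case $\alpha<0$ is well-posed by hypothesis.

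Second, I would apply \cref{t:error_estimates}. The energy estimate follows directly: substituting the stability bound into the first inequality of the theorem yields $\lVert u - u_h \rVert_h \lesssim h^s \lVert f \rVert_\Omega$. For the $L^2$ estimate, the key observation is that \cref{t:error_estimates} allows any $t < \mu_{\min}$; I would simply choose $t = s$, which is admissible since $s < \mu_{\min}$. Then $s + t = 2s$, and the theorem's second inequality combined with the stability bound gives $\lVert u - u_h \rVert_\Omega \lesssim h^{2s} \lVert f \rVert_\Omega$.

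Finally, the trichotomy on $\alpha$ in the statement of the corollary mirrors exactly that in \cref{t:error_estimates}: for $\alpha \geq 0$ (with connected complement when $\alpha=0$) the estimates hold for all $h$, while for $\alpha < 0$ with \eqref{e:primal} well-posed, \cref{t:error_estimates} gives well-posedness of \eqref{e:one-field} and the same estimates for $h$ sufficiently small. There is essentially no obstacle: the corollary is bookkeeping, amounting to inserting the regularity and stability bound from \cref{c:u_reg} into \cref{t:error_estimates} and making the specific choice $t = s$ to balance the two powers of $h$ in the $L^2$ estimate.
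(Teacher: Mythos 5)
Your proposal is correct and matches the paper's own proof, which simply cites \cref{c:u_reg} and \cref{t:error_estimates} with $t = s$; your additional remarks (e.g., that $s < \mu_{\min} \leq 1 \leq k$ ensures the degree condition) are accurate elaborations of the same argument.
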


\begin{proof}
  This is immediate from \cref{c:u_reg,t:error_estimates} with
  $ s = t $.
\end{proof}

\section{Numerical experiments}
\label{s:experiments}

In this section, we present numerical experiments illustrating the
convergence behavior of the method, showing how convergence is
affected by the interior angles of $\Omega$ and by the regularity of
the exact solution $u$, and relating these numerical results to the
theoretical results of \cref{s:analysis}. For all numerical
experiments, we take $ \alpha = 1 $.

All computations have been carried out using the Firedrake finite
element library \citep{RaHaMiLaLuMcBeMaKe2017} (version
0.13.0+4959.gac22e4c5), and a Firedrake component called Slate
\citep{GiMiHaCo2020} was used to implement the local solvers for
static condensation and postprocessing.

\subsection{Smooth solution on square domain}
\label{s:square}

We begin by considering the square domain
$ \Omega = \bigl( 0, \frac{1}{2} \bigr) ^2 $. Since all four corners
of $\Omega$ are $ \pi / 2 $, we have $ \mu = (1, 1, 1, 1) $. Given
$N \in \mathbb{N} $, we construct a uniform triangle mesh by
partitioning $\Omega$ uniformly into $ N \times N $ squares, then
dividing each into two triangles.

\begin{table}
  \centering
  \scriptsize
  \pgfplotstabletypeset[
  col sep = comma,
  every head row/.style = {
    before row={
      \toprule
      & \multicolumn{4}{c}{$ k = 1 $}
      & \multicolumn{4}{c}{$ k = 2 $}
      & \multicolumn{4}{c}{$ k = 3 $}\\
      \cmidrule(lr){2-5}
      \cmidrule(lr){6-9}
      \cmidrule(lr){10-13}
    },
    after row=\midrule
  },
  every last row/.style = { after row=\bottomrule },
  every even column/.style = { fixed, fixed zerofill }, 
  every odd column/.style = { sci, sci e, sci zerofill }, 
  columns/N/.style = {
    column name = $N$,
    precision = 0,
    column type = r,
  },
  columns/{k=1_energy_error}/.style={
    column name=$ \lVert u - u _h \rVert _h $
  },
  columns/{k=1_energy_rate}/.style={
    column name=rate
  },
  columns/{k=1_l2_error}/.style={
    column name=$ \lVert u - u _h \rVert _\Omega $
  },
  columns/{k=1_l2_rate}/.style={
    column name=rate
  },
  columns/{k=2_energy_error}/.style={
    column name=$ \lVert u - u _h \rVert _h $
  },
  columns/{k=2_energy_rate}/.style={
    column name=rate
  },
  columns/{k=2_l2_error}/.style={
    column name=$ \lVert u - u _h \rVert _\Omega $
  },
  columns/{k=2_l2_rate}/.style={
    column name=rate
  },
  columns/{k=3_energy_error}/.style={
    column name=$ \lVert u - u _h \rVert _h $
  },
  columns/{k=3_energy_rate}/.style={
    column name=rate
  },
  columns/{k=3_l2_error}/.style={
    column name=$ \lVert u - u _h \rVert _\Omega $
  },
  columns/{k=3_l2_rate}/.style={
    column name=rate
  },
  ]{square.csv}
  \caption{Convergence to a smooth solution on a square
    domain.\label{tab:square}}
\end{table}

\Cref{tab:square} shows the result of applying our method to the
problem whose exact solution is
\begin{equation*}
  u =
  \begin{bmatrix*}
    ( x ^3 / 3 - x ^2 / 4  ) ( y ^2 -  y / 2 ) \sin y \\[1ex]
    ( y ^3 / 3 - y ^2 / 4 ) ( x ^2 - x / 2 ) \cos x 
  \end{bmatrix*}.
\end{equation*}
(This is the same $u$ that \citet{BrCuLiSu2008} use for their
numerical experiments on the square.) Since $u$ is smooth, we observe
convergence rates of $k$ for the energy error and $ k + 1 $ for the
$ L ^2 $ error, consistent with \cref{t:error_estimates}.

\subsection{Minimum-regularity solutions on L-shaped domain}
\label{s:min-reg}

We next consider the L-shaped domain
$ \Omega = \bigl( - \frac{1}{2}, \frac{1}{2} \bigr) ^2 \setminus
\bigl[ 0, \frac{1}{2} \bigr] ^2 $. This has a reentrant corner at the
origin with angle $ \omega _1 = 3 \pi / 2 $, so we may take any
$ \mu _1 < 1/3 $. The remaining corners $ c _\ell $ have
$ \omega _\ell = \pi / 2 $ and thus $ \mu _\ell = 1 $ for
$ \ell = 2, \ldots, 6 $. Given $ N \in \mathbb{N} $, we construct a
uniform triangle mesh of $\Omega$ by taking a uniform
$ 2 N \times 2 N $ mesh of the square
$ \bigl( - \frac{1}{2} , \frac{1}{2} \bigr) ^2 $, as in
\cref{s:square}, and removing the first quadrant.

\subsubsection{Minimum-regularity singular harmonic vector field}
\label{s:min-reg_harmonic}

In polar coordinates $ ( r, \theta ) $, we first consider the problem
whose exact solution is
\begin{equation*}
  u = \nabla \times \Biggl( r ^{ 2/3 } \cos \biggl[ \frac{ 2 }{ 3 } \Bigl( \theta - \frac{ \pi }{ 2 } \Bigr) \biggr] \Biggr),
\end{equation*}
which is a harmonic vector field with $ \nabla \cdot u = 0 $ and
$ \nabla \times u = 0 $. We observe that
$ u \in \bigl[ V ^m _{ m - 1/3 - \mu } (\Omega) \bigr] ^2 $ for all
$m$, since the condition for $ \partial ^\beta u $ to be in the
appropriate weighted $ L ^2 $ space in a $\delta$-neighborhood of the
origin is
\begin{equation*}
  \int _0 ^\delta \bigl( r ^{- 1/3 - \mu _1 + \lvert \beta \rvert } r ^{ 2/3 - 1 - \lvert \beta \rvert } \bigr) ^2 r \,\mathrm{d}r = \int _0 ^\delta r ^{ 2 ( 1/3 - \mu _1 ) - 1 } \,\mathrm{d}r < \infty ,
\end{equation*}
which holds since $ \mu _1 < 1/3 $. (Compare \citet[Theorem
6.1]{CoDa2002}.) By interpolation, we get
$ u \in \bigl[ V ^{ 1/3 + 1 } _{ 1 - \mu } (\Omega) \bigr] ^2 $, so
the hypotheses of the error estimates in \cref{s:error_estimates} hold
with $ s = 1 / 3 $.

\begin{remark}
  \label{r:bc}
  Although $u$ does not satisfy the homogeneous boundary condition
  $ u \times n = 0 $ on all of $\partial \Omega$, it does satisfy this
  condition on the boundary edges $ \theta = \pi / 2 $ and
  $ \theta = 2 \pi $ adjacent to the reentrant corner. Thus, taking
  $\phi$ to be a smooth cutoff function supported in a small
  neighborhood of the origin, we may write
  $ u = u \phi + u ( 1 - \phi ) $, where $ u \phi $ satisfies the
  homogeneous boundary condition and $ u ( 1 - \phi ) $ is a smooth
  extension of the inhomogeneous boundary condition. It follows that
  $u$ and $ u \phi $ have the same regularity, and standard arguments
  may be used to extend the numerical properties of the method from
  the homogeneous boundary value problem with exact solution $u \phi $
  to the inhomogeneous boundary value problem with exact solution
  $ u $.
\end{remark}

\cref{tab:min-reg_harmonic} shows the results of applying our method
to this problem, where the inhomogeneous boundary conditions are
imposed on $ \hat{ u } _h \times n $ by interpolating $ u \times n $
on $ \mathcal{E} _h ^\partial $. Since $ s = 1/3 $, we observe minimal
convergence rates of approximately $ 1/3 $ for the energy error and
$ 2/3 $ for the $ L ^2 $ error for all $k$, consistent with
\cref{t:error_estimates}.

\subsubsection{Minimum-regularity nonsingular vector field}
\label{s:min-reg_non-harmonic}

\begin{table}
  \centering
  \scriptsize
  \pgfplotstabletypeset[
  col sep = comma,
  every head row/.style = {
    before row={
      \toprule
      & \multicolumn{4}{c}{$ k = 1 $}
      & \multicolumn{4}{c}{$ k = 2 $}
      & \multicolumn{4}{c}{$ k = 3 $}\\
      \cmidrule(lr){2-5}
      \cmidrule(lr){6-9}
      \cmidrule(lr){10-13}
    },
    after row=\midrule
  },
  every last row/.style = { after row=\bottomrule },
  every even column/.style = { fixed, fixed zerofill }, 
  every odd column/.style = { sci, sci e, sci zerofill }, 
  columns/N/.style = {
    column name = $N$,
    precision = 0,
    column type = r,
  },
  columns/{k=1_energy_error}/.style={
    column name=$ \lVert u - u _h \rVert _h $
  },
  columns/{k=1_energy_rate}/.style={
    column name=rate
  },
  columns/{k=1_l2_error}/.style={
    column name=$ \lVert u - u _h \rVert _\Omega $
  },
  columns/{k=1_l2_rate}/.style={
    column name=rate
  },
  columns/{k=2_energy_error}/.style={
    column name=$ \lVert u - u _h \rVert _h $
  },
  columns/{k=2_energy_rate}/.style={
    column name=rate
  },
  columns/{k=2_l2_error}/.style={
    column name=$ \lVert u - u _h \rVert _\Omega $
  },
  columns/{k=2_l2_rate}/.style={
    column name=rate
  },
  columns/{k=3_energy_error}/.style={
    column name=$ \lVert u - u _h \rVert _h $
  },
  columns/{k=3_energy_rate}/.style={
    column name=rate
  },
  columns/{k=3_l2_error}/.style={
    column name=$ \lVert u - u _h \rVert _\Omega $
  },
  columns/{k=3_l2_rate}/.style={
    column name=rate
  },
  ]{lsd_singular_0.667.csv}
  \caption{Convergence to the minimum-regularity singular harmonic on
    an L-shaped domain.\label{tab:min-reg_harmonic}}

  \pgfplotstabletypeset[
  col sep = comma,
  every head row/.style = {
    before row={
      \toprule
      & \multicolumn{4}{c}{$ k = 1 $}
      & \multicolumn{4}{c}{$ k = 2 $}
      & \multicolumn{4}{c}{$ k = 3 $}\\
      \cmidrule(lr){2-5}
      \cmidrule(lr){6-9}
      \cmidrule(lr){10-13}
    },
    after row=\midrule
  },
  every last row/.style = { after row=\bottomrule },
  every even column/.style = { fixed, fixed zerofill }, 
  every odd column/.style = { sci, sci e, sci zerofill }, 
  columns/N/.style = {
    column name = $N$,
    precision = 0,
    column type = r,
  },
  columns/{k=1_energy_error}/.style={
    column name=$ \lVert u - u _h \rVert _h $
  },
  columns/{k=1_energy_rate}/.style={
    column name=rate
  },
  columns/{k=1_l2_error}/.style={
    column name=$ \lVert u - u _h \rVert _\Omega $
  },
  columns/{k=1_l2_rate}/.style={
    column name=rate
  },
  columns/{k=2_energy_error}/.style={
    column name=$ \lVert u - u _h \rVert _h $
  },
  columns/{k=2_energy_rate}/.style={
    column name=rate
  },
  columns/{k=2_l2_error}/.style={
    column name=$ \lVert u - u _h \rVert _\Omega $
  },
  columns/{k=2_l2_rate}/.style={
    column name=rate
  },
  columns/{k=3_energy_error}/.style={
    column name=$ \lVert u - u _h \rVert _h $
  },
  columns/{k=3_energy_rate}/.style={
    column name=rate
  },
  columns/{k=3_l2_error}/.style={
    column name=$ \lVert u - u _h \rVert _\Omega $
  },
  columns/{k=3_l2_rate}/.style={
    column name=rate
  },
  ]{lsd_nonsingular_2.001.csv}
  \caption{Convergence to a minimum-regularity nonsingular solution on
    an L-shaped domain.\label{tab:min-reg_non-harmonic}}
\end{table}

The next example shows that even a nonsingular vector field may have
minimum regularity, owing to the conditions
$ \nabla \cdot u , \nabla \times u \in V ^s _{ \mu -1 } (\Omega) $.
Given arbitrarily small $ \epsilon > 0 $, consider the problem whose
exact solution is
\begin{equation*}
  u = \nabla \times r ^{ 2 + \epsilon } ,
\end{equation*}
where the inhomogeneous boundary conditions may also be dealt with as
in \cref{r:bc}. By a similar calculation as for the singular harmonic
vector field, we have
$ u \in \bigl[ V ^m _{ m - 5/3 - \mu } (\Omega) \bigr] ^2 $ for all
$m$, and thus
$ u \in \bigl[ V ^{ 5/3 + 1 } _{ 1 - \mu } (\Omega) \bigr] ^2
$. However, we merely have
$ \nabla \times u \in V ^m _{ m - 1/3 + \mu - 1 } (\Omega) $ for all
$m$, provided that $ 1/3 - \epsilon < \mu _1 < 1/3 $, since
\begin{equation*}
  \int _0 ^\delta ( r ^{ - 1/3 + \mu _1 - 1 + \lvert \beta \rvert } r ^{\epsilon - \lvert \beta \rvert } ) ^2 r \,\mathrm{d}r = \int _0 ^\delta r ^{ 2 ( \mu _1 - 1/3 + \epsilon ) - 1 } \,\mathrm{d}r < \infty .
\end{equation*}
and interpolation gives
$ \nabla \times u \in V ^{1/3} _{ \mu -1 } (\Omega) $. Hence, even
though $u$ does not have a singularity at the origin, the regularity
hypotheses of \cref{t:error_estimates} hold merely with $ s = 1 / 3 $.

\cref{tab:min-reg_non-harmonic} shows the results of applying our method
to this problem with $ \epsilon = 0.001 $. As with the previous
example, since $ s = 1/3 $, we observe minimal convergence rates of
approximately $ 1/3 $ for the energy error and $ 2/3 $ for the
$ L ^2 $ error for all $k$, consistent with \cref{t:error_estimates}.

\subsection{Higher-regularity solutions on L-shaped domain}

Finally, we present numerical results for convergence to solutions
with higher regularity on the L-shaped domain, observing improved
convergence for larger $k$. As in \cref{s:min-reg}, we consider both a
harmonic and a non-harmonic example---here, both having
$ s = 7/3 $---on the same family of uniform meshes, where
inhomogeneous boundary conditions for $ u \times n $ on
$ \partial \Omega $ are handled in the same way.

\subsubsection{Higher-regularity harmonic vector field}
\label{s:higher_harmonic}

Consider the problem whose exact solution is
\begin{equation*}
  u = \nabla \times \Biggl( r ^{ 8/3 } \cos \biggl[ \frac{ 8 }{ 3 } \Bigl( \theta - \frac{ \pi }{ 2 } \Bigr) \biggr] \Biggr),
\end{equation*}
which is a harmonic vector field with $ \nabla \cdot u = 0 $ and
$ \nabla \times u = 0 $. By a similar calculation to that in
\cref{s:min-reg_harmonic}, we get
$ u \in \bigl[ V ^m _{ m - 7/3 - \mu } (\Omega) \bigr] ^2 $ for all
$m$. By interpolation, we see that
$ u \in \bigl[ V ^{ 7/3 + 1 }_{ 1 - \mu } (\Omega) \bigr] ^2 $, so the
hypotheses of the error estimates in \cref{s:error_estimates} hold
with $ s = 7 / 3 $.

\cref{tab:higher_harmonic} shows the results of applying our method to
this problem. Since $ s \leq 3 $, for $ k = 3 $ we observe the maximum
convergence rates predicted by \cref{t:error_estimates}: roughly
$ 7/3 $ for the energy error and $ 8/3 $ for the $ L ^2 $ error. For
$ k = 2 $, however, we \emph{also} observe rates of approximately
$ 7/3 $ for the energy error and $ 8/3 $ for the $ L ^2 $ error. This
is explained by the fact that $u$ is the curl of a harmonic function,
and $ \textit{BSM}_k (K) $ contains gradients (hence curls) of
harmonic polynomials with degree $ \leq 2k $. In this special case,
the condition $ s \leq k $ in the approximation estimate
\cref{l:energy_approx} improves to $ s \leq 2 k -1 $, while the
consistency error in \cref{l:consistency} vanishes due to
$ \nabla \cdot u = 0 $ and $ \nabla \times u = 0 $. For $ k = 1 $, we
observe the expected energy-norm convergence rate of $1$, but the
$ L ^2 $-norm convergence rate of $2$ is \emph{better} than the
duality-based estimate of $ 4/3 $ in \cref{t:error_estimates}. We do
not yet have a satisfying analytical explanation for this
better-than-expected gap between the energy-norm and $ L ^2 $-norm
rates when $ s > k $; see further discussion in the next example and
in \cref{s:conclusion}.

\subsubsection{Higher-regularity non-harmonic vector field}

\begin{table}
  \centering
  \scriptsize
  \pgfplotstabletypeset[
  col sep = comma,
  every head row/.style = {
    before row={
      \toprule
      & \multicolumn{4}{c}{$ k = 1 $}
      & \multicolumn{4}{c}{$ k = 2 $}
      & \multicolumn{4}{c}{$ k = 3 $}\\
      \cmidrule(lr){2-5}
      \cmidrule(lr){6-9}
      \cmidrule(lr){10-13}
    },
    after row=\midrule
  },
  every last row/.style = { after row=\bottomrule },
  every even column/.style = { fixed, fixed zerofill }, 
  every odd column/.style = { sci, sci e, sci zerofill }, 
  columns/N/.style = {
    column name = $N$,
    precision = 0,
    column type = r,
  },
  columns/{k=1_energy_error}/.style={
    column name=$ \lVert u - u _h \rVert _h $
  },
  columns/{k=1_energy_rate}/.style={
    column name=rate
  },
  columns/{k=1_l2_error}/.style={
    column name=$ \lVert u - u _h \rVert _\Omega $
  },
  columns/{k=1_l2_rate}/.style={
    column name=rate
  },
  columns/{k=2_energy_error}/.style={
    column name=$ \lVert u - u _h \rVert _h $
  },
  columns/{k=2_energy_rate}/.style={
    column name=rate
  },
  columns/{k=2_l2_error}/.style={
    column name=$ \lVert u - u _h \rVert _\Omega $
  },
  columns/{k=2_l2_rate}/.style={
    column name=rate
  },
  columns/{k=3_energy_error}/.style={
    column name=$ \lVert u - u _h \rVert _h $
  },
  columns/{k=3_energy_rate}/.style={
    column name=rate
  },
  columns/{k=3_l2_error}/.style={
    column name=$ \lVert u - u _h \rVert _\Omega $
  },
  columns/{k=3_l2_rate}/.style={
    column name=rate
  },
  ]{lsd_singular_2.667.csv}
  \caption{Convergence to a higher-regularity harmonic vector field on
    an L-shaped domain.\label{tab:higher_harmonic}}

  \pgfplotstabletypeset[
  col sep = comma,
  every head row/.style = {
    before row={
      \toprule
      & \multicolumn{4}{c}{$ k = 1 $}
      & \multicolumn{4}{c}{$ k = 2 $}
      & \multicolumn{4}{c}{$ k = 3 $}\\
      \cmidrule(lr){2-5}
      \cmidrule(lr){6-9}
      \cmidrule(lr){10-13}
    },
    after row=\midrule
  },
  every last row/.style = { after row=\bottomrule },
  every even column/.style = { fixed, fixed zerofill }, 
  every odd column/.style = { sci, sci e, sci zerofill }, 
  columns/N/.style = {
    column name = $N$,
    precision = 0,
    column type = r,
  },
  columns/{k=1_energy_error}/.style={
    column name=$ \lVert u - u _h \rVert _h $
  },
  columns/{k=1_energy_rate}/.style={
    column name=rate
  },
  columns/{k=1_l2_error}/.style={
    column name=$ \lVert u - u _h \rVert _\Omega $
  },
  columns/{k=1_l2_rate}/.style={
    column name=rate
  },
  columns/{k=2_energy_error}/.style={
    column name=$ \lVert u - u _h \rVert _h $
  },
  columns/{k=2_energy_rate}/.style={
    column name=rate
  },
  columns/{k=2_l2_error}/.style={
    column name=$ \lVert u - u _h \rVert _\Omega $
  },
  columns/{k=2_l2_rate}/.style={
    column name=rate
  },
  columns/{k=3_energy_error}/.style={
    column name=$ \lVert u - u _h \rVert _h $
  },
  columns/{k=3_energy_rate}/.style={
    column name=rate
  },
  columns/{k=3_l2_error}/.style={
    column name=$ \lVert u - u _h \rVert _\Omega $
  },
  columns/{k=3_l2_rate}/.style={
    column name=rate
  },
  ]{lsd_nonsingular_4.001.csv}
  \caption{Convergence to a higher-regularity non-harmonic vector
    field on an L-shaped domain.\label{tab:higher_non-harmonic}}
\end{table}

Given arbitrarily small $ \epsilon > 0 $, consider the problem whose
exact solution is
\begin{equation*}
  u = \nabla \times r ^{ 4 + \epsilon } .
\end{equation*}
By a similar calculation to that in \cref{s:min-reg_non-harmonic}, we
have $ u \in \bigl[ V ^m _{ m - 11/3 - \mu } (\Omega) \bigr] ^2 $ for
all $m$, and thus
$ u \in \bigl[ V ^{ 11/3 + 1 } _{ 1 - \mu } (\Omega) \bigr] ^2
$. However, we merely have
$ \nabla \times u \in V ^m _{ m - 7/3 + \mu -1 } (\Omega) $ for all
$m$, provided that $ 1/3 - \epsilon < \mu _1 < 1/3 $, so interpolation
gives $ \nabla \times u \in V ^{ 7/3 } _{ \mu -1 } (\Omega) $. Hence,
the regularity hypotheses of \cref{t:error_estimates} hold merely with
$ s = 7/3 $.

\cref{tab:higher_non-harmonic} shows the results of applying our
method to this problem with $ \epsilon = 0.001 $. For all $k$, we
observe a convergence rate of approximately $\min(k, 7/3)$ in the energy norm,
consistent with \cref{t:error_estimates}. This also supports the
argument that the improved energy error in \cref{s:higher_harmonic},
which is not observed here, was due to that exact solution being the
curl of a harmonic function. For $ k = 3 $, we observe the expected
$ L ^2 $-norm convergence rate of approximately $ 8 / 3 $. However,
for $ k = 2 $ and $ k = 1 $, we observe better-than-expected rates of
$ 8/3 $ (rather than $ 7/3 $) and $ 2 $ (rather than $4/3$),
respectively, similar to what we saw with the $ k = 1 $ case in
\cref{s:higher_harmonic}.

\section{Conclusion}
\label{s:conclusion}

We have presented a nonconforming primal hybrid finite element method
for the two-dimensional vector Laplacian that extends the
$ P _1 $-nonconforming method of \citet{BrCuLiSu2008} to arbitrary
order $k$.  The method uses only standard polynomial finite elements,
although the more exotic BSM element and projection play a key role in
the analysis, and the method may be implemented efficiently using
static condensation. Using the weighted Sobolev spaces of
\citeauthor{Kondratiev1967} for domains with corners, we have obtained
error estimates that hold on general shape-regular meshes, without
mesh-grading conditions. These estimates establish the convergence of
the method, even for minimum-regularity solutions with corner
singularities, and the convergence rate improves with $k$ to the
extent regularity allows.

Let us conclude with a brief discussion of one area where the
numerical experiments in \cref{s:experiments} suggest possible room
for improvement. Dropping the hypothesis that $ s \leq k $, we may
rewrite the estimates of \cref{t:error_estimates} as
\begin{align}
  \lVert u - u _h \rVert _h &\lesssim h ^{\min(k,s)} \bigl( \lVert u \rVert _{ s + 1 , 1 - \mu } + \lVert \nabla \cdot u \rVert _{ s, \mu -1 } + \lVert \nabla \times u \rVert _{ s, \mu -1 } \bigr) , \label{e:energy_estimate}\\
  \lVert u - u _h \rVert _\Omega &\lesssim h ^{ \min(k,s) + t } \bigl( \lVert u \rVert _{ s + 1 , 1 - \mu } + \lVert \nabla \cdot u \rVert _{ s, \mu -1 } + \lVert \nabla \times u \rVert _{ s, \mu -1 } \bigr) \label{e:l2_estimate}.
\end{align}
From the numerical experiments, it appears that the energy estimate
\eqref{e:energy_estimate} is sharp, and in general one cannot relax
the restriction $ t < \mu_{\min} $ in the $ L ^2 $ estimate
\eqref{e:l2_estimate}. However, when $ s > k $, it appears that a
sharper estimate than \eqref{e:l2_estimate} holds, which we now state
as a conjecture.

\begin{conjecture}
  Under the conditions of \cref{t:error_estimates} (except for the
  condition $ s \leq k $),
  \begin{equation*}
    \lVert u - u _h \rVert _\Omega \lesssim h ^{ \min (k + 1 , s + t ) } \bigl( \lVert u \rVert _{ s + 1 , 1 - \mu } + \lVert \nabla \cdot u \rVert _{ s, \mu -1 } + \lVert \nabla \times u \rVert _{ s, \mu -1 } \bigr).
  \end{equation*} 
\end{conjecture}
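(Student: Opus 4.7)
The plan is to refine the duality argument of \cref{l:duality} in the regime $s > k$, where the current bottleneck is the term $h^t \lVert u - u_h \rVert_h \sim h^{k+t}$ arising from the limited regularity of the dual solution $z$ at reentrant corners. The goal is to recover the missing factor $h^{1-t}$ when the primal solution is smooth enough.

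The natural strategy is to split the dual solution via a corner decomposition $z = z_{\mathrm{reg}} + \sum_\ell \kappa_\ell \chi_\ell z_{\mathrm{sing}, \ell}$, following \citet[Section 6.6]{KoMaRo1997}, where $z_{\mathrm{reg}} \in [H^2(\Omega)]^2$, each $\chi_\ell$ is a smooth cutoff supported in a neighborhood of the corner $c_\ell$, and $z_{\mathrm{sing}, \ell}$ is an explicit Kondrat'ev singular function with coefficient $\lvert \kappa_\ell \rvert \lesssim \lVert u - u_h \rVert_\Omega$. For the regular part, standard approximation gives $\lVert z_{\mathrm{reg}} - \Pi_h z_{\mathrm{reg}} \rVert_h \lesssim h \lVert z_{\mathrm{reg}} \rVert_2 \lesssim h \lVert u - u_h \rVert_\Omega$, so its contribution to the duality identity \eqref{e:l2_error} is bounded by $h \lVert u - u_h \rVert_h \lVert u - u_h \rVert_\Omega \lesssim h^{k+1} \lVert u - u_h \rVert_\Omega$ when $s \geq k$, and more generally by $h^{\min(k+1, s+t)} \lVert u - u_h \rVert_\Omega$ via the energy estimate of \cref{t:error_estimates}.

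The main obstacle will be the singular contributions $\kappa_\ell \, a_h(u - u_h, \chi_\ell z_{\mathrm{sing}, \ell})$. These carry a factor $\lvert \kappa_\ell \rvert \lesssim \lVert u - u_h \rVert_\Omega$, but naive Cauchy--Schwarz in the energy norm gives only an additional $\lVert u - u_h \rVert_h \lesssim h^k$, which fails to close the gap. One would need to exhibit an extra power of $h$ by exploiting the structure of $\chi_\ell z_{\mathrm{sing}, \ell}$: since $z_{\mathrm{sing}, \ell}$ is a classical solution of the homogeneous problem away from $c_\ell$, integration by parts in $a_h(u - u_h, \chi_\ell z_{\mathrm{sing}, \ell})$ should reduce the bulk terms to commutator terms supported where $\nabla \chi_\ell$ is nontrivial (away from the corner, where $u$ enjoys its full $s$-regularity), plus boundary-like contributions on a shrinking corner neighborhood that are controlled by the weighted regularity of $\nabla \cdot u$ and $\nabla \times u$. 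Making this rigorous would likely require a sharpened localized version of \cref{l:consistency_approx}, of the form
\[
\sum_{\substack{K \ni c_\ell \\ e \subset \partial K}} \frac{\lvert e \rvert}{[\Phi_\mu(e)]^2} \lVert \eta - P_h \eta \rVert_e^2 \lesssim h^{2\min(k+1, s+t)} \lVert \eta \rVert_{s, \mu-1, \omega_\ell}^2,
\]
on a corner neighborhood $\omega_\ell$, that exploits both the support of $\chi_\ell z_{\mathrm{sing}, \ell}$ and the higher regularity of $u$.

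If this corner-decomposition approach proves too delicate, an alternative path suggested by the numerical evidence of \cref{s:higher_harmonic}---in particular the unexpected $h^{k+1}$ $L^2$ rates observed for $k=1$ and $k=2$---is to pursue a local postprocessing or superconvergence result in the spirit of HDG methods \citep{CoGoLa2009}, leveraging the oversized polynomial space $V_h = \prod_K [P_{2k-1}(K)]^2$ to reconstruct a higher-order approximation locally and thereby bypass the dual-regularity bottleneck altogether.
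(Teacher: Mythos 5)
This statement is posed in the paper as an open \emph{conjecture}: the authors give no proof, and in the surrounding discussion they explicitly argue that a duality estimate of the sort in \cref{l:duality} can only yield an improvement of $t < \mu_{\min}$ over the energy rate, uniformly in $k$, which is strictly weaker than the conjectured $\min(k+1, s+t)$ when $s > k$. So there is nothing in the paper to compare your argument against; the only question is whether your proposal actually closes the gap. It does not. What you have written is a research plan whose decisive step is identified but not carried out: you concede that Cauchy--Schwarz on the singular contribution $\kappa_\ell\, a_h(u-u_h, \chi_\ell z_{\mathrm{sing},\ell})$ only recovers $h^{k+t}$, and the mechanism you propose to gain the remaining factor $h^{1-t}$ --- integration by parts against $\chi_\ell z_{\mathrm{sing},\ell}$ plus a ``sharpened localized version'' of \cref{l:consistency_approx} --- is neither formulated precisely nor proved, and it is not evident that it is even true. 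Concretely: because $u_h$ is nonconforming, integrating $a_h(u-u_h,\cdot)$ by parts against the singular function produces jump terms $\langle \cdot, \llbracket (u-u_h)\cdot n\rrbracket\rangle_e$ and $\langle\cdot,\llbracket (u-u_h)\times n\rrbracket\rangle_e$ on \emph{every} interior edge in the support of $\chi_\ell$, not only on edges touching $c_\ell$; on those edges the only available control on $u-u_h$ is the energy norm, which is $O(h^{\min(k,s)})$, and the dual weight contributed by $z_{\mathrm{sing},\ell}$ near the corner is exactly $O(h^{t})$ --- this is the same bottleneck as in \eqref{e:l2_error}, merely relocated. Your proposed localized estimate sums only over elements containing $c_\ell$, which does not cover these terms.

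There are two further soft spots. First, the corner decomposition $z = z_{\mathrm{reg}} + \sum_\ell \kappa_\ell \chi_\ell z_{\mathrm{sing},\ell}$ with $z_{\mathrm{reg}} \in [H^2(\Omega)]^2$ and $\lvert\kappa_\ell\rvert \lesssim \lVert u - u_h\rVert_\Omega$ is asserted by citation to \citet[Section 6.6]{KoMaRo1997}, but those results concern scalar Dirichlet and Neumann problems; for the vector problem \eqref{e:primal} one must decompose through the Helmholtz potentials as in \cref{t:u_reg}, and the singular exponents for the tangential-boundary-condition problem need to be identified before one can claim the stated form and coefficient bound. Second, your bound for the regular part silently uses the energy estimate $\lVert u - u_h\rVert_h \lesssim h^{\min(k,s)}$ to convert $h\lVert u-u_h\rVert_h$ into $h^{k+1}$, which is fine, but the same substitution applied to the remaining terms of \eqref{e:l2_error} (the jump terms $\langle\llbracket u_h\cdot n\rrbracket, \nabla\cdot z\rangle$, etc.) again only gives $h^{k+t}$ unless the singular part of $\nabla\cdot z$ and $\nabla\times z$ is also handled by the unproven mechanism. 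The fallback suggestion of an HDG-style postprocessing/superconvergence argument is a reasonable direction --- arguably more promising, since the paper itself indicates the duality route is structurally capped at a gain of $t$ --- but it is only a direction. As it stands, the conjecture remains unproved by your proposal.
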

Establishing this would require some new analytical arguments, perhaps
involving weighted-norm error estimates. Indeed, a duality estimate of
the sort in \cref{l:duality} can only give an $ L ^2 $ error estimate
of the form \eqref{e:l2_estimate}, where the improvement over the
energy rate is the same for all $k$, and the numerical experiments
suggest that there is no way to sharpen this uniformly in $k$.

Finally, it is natural to ask how the two-dimensional method presented
in this paper might be generalized to three dimensions. In contrast
with some other approaches that are limited to dimension two---such as
methods that use the Hodge decomposition to transform vector problems
into scalar problems
\citep{BrCuNaSu2012,LiZh2015,LiZh2017,BrGeSu2017}---the variational
form of the hybrid method \eqref{e:hybrid} extends naturally to the
three-dimensional case. The main challenge is to choose suitable
finite element spaces and a suitable penalty, and here there are two
obstacles to overcome. First, when $ k > 1 $, we do not yet know a
three-dimensional version of the BSM element and commuting projection,
which would be needed to make the analysis work. (A naive extension to
three dimensions fails to satisfy unisolvence when $ k = 2 $, as shown
by \citet{Mirebeau2012}.)  Consequently, it is not clear what
polynomial degrees would be needed for the finite element spaces
$ V _h $, $ Q _h $, and $ \hat{V} _h $. Second, the weighted Sobolev
analysis in three-dimensional domains becomes more complicated, since
singularities can form along boundary edges, as well as at corners
where edges meet, cf.~\citet{CoDa2002}. Consequently, a penalty
$\gamma$ would need to be carefully constructed, likely involving the
distances both to edges and to corners with suitable exponents.

\section*{Acknowledgments}

We are grateful to the anonymous referees for their helpful comments
and suggestions.

\footnotesize


\begin{thebibliography}{33}
\providecommand{\natexlab}[1]{#1}

\bibitem[{Arnold et~al.(2010)Arnold, Falk, and Winther}]{ArFaWi2010}
\textsc{D.~N. Arnold, R.~S. Falk, and R.~Winther}, \emph{Finite element
  exterior calculus: from {H}odge theory to numerical stability}, Bull. Amer.
  Math. Soc. (N.S.), 47 (2010), pp. 281--354.

\bibitem[{Assous et~al.(1998)Assous, Ciarlet, and
  Sonnendr\"{u}cker}]{AsCiSo1998}
\textsc{F.~Assous, P.~Ciarlet, Jr., and E.~Sonnendr\"{u}cker}, \emph{Resolution
  of the {M}axwell equations in a domain with reentrant corners}, RAIRO
  Mod\'{e}l. Math. Anal. Num\'{e}r., 32 (1998), pp. 359--389.

\bibitem[{Birman and Solomyak(1987)}]{BiSo1987}
\textsc{M.~S. Birman and M.~Z. Solomyak}, \emph{{$L_2$}-theory of the {M}axwell
  operator in arbitrary domains}, Uspekhi Mat. Nauk, 42 (1987), pp. 61--76.

\bibitem[{Boffi et~al.(2013)Boffi, Brezzi, and Fortin}]{BoBrFo2013}
\textsc{D.~Boffi, F.~Brezzi, and M.~Fortin}, \emph{Mixed finite element methods
  and applications}, vol.~44 of Springer Series in Computational Mathematics,
  Springer, Heidelberg, 2013.

\bibitem[{Boffi et~al.(1999)Boffi, Duran, and Gastaldi}]{BoDuGa1999}
\textsc{D.~Boffi, R.~G. Duran, and L.~Gastaldi}, \emph{A remark on spurious
  eigenvalues in a square}, Appl. Math. Lett., 12 (1999), pp. 107--114.

\bibitem[{Brenner et~al.(2008{\natexlab{a}})Brenner, Cui, Li, and
  Sung}]{BrCuLiSu2008}
\textsc{S.~C. Brenner, J.~Cui, F.~Li, and L.-Y. Sung}, \emph{A nonconforming
  finite element method for a two-dimensional curl-curl and grad-div problem},
  Numer. Math., 109 (2008{\natexlab{a}}), pp. 509--533.

\bibitem[{Brenner et~al.(2012)Brenner, Cui, Nan, and Sung}]{BrCuNaSu2012}
\textsc{S.~C. Brenner, J.~Cui, Z.~Nan, and L.-Y. Sung}, \emph{Hodge
  decomposition for divergence-free vector fields and two-dimensional
  {M}axwell's equations}, Math. Comp., 81 (2012), pp. 643--659.

\bibitem[{Brenner et~al.(2017)Brenner, Gedicke, and Sung}]{BrGeSu2017}
\textsc{S.~C. Brenner, J.~Gedicke, and L.-Y. Sung}, \emph{Hodge decomposition
  for two-dimensional time-harmonic {M}axwell's equation: impedance boundary
  condition}, Math. Methods Appl. Sci., 40 (2017), pp. 370--390.

\bibitem[{Brenner et~al.(2007)Brenner, Li, and Sung}]{BrLiSu2007}
\textsc{S.~C. Brenner, F.~Li, and L.-Y. Sung}, \emph{A locally divergence-free
  nonconforming finite element method for the time-harmonic {M}axwell
  equations}, Math. Comp., 76 (2007), pp. 573--595.

\bibitem[{Brenner et~al.(2008{\natexlab{b}})Brenner, Li, and Sung}]{BrLiSu2008}
\leavevmode\vrule height 2pt depth -1.6pt width 23pt, \emph{A locally
  divergence-free interior penalty method for two-dimensional curl-curl
  problems}, SIAM J. Numer. Anal., 46 (2008{\natexlab{b}}), pp. 1190--1211.

\bibitem[{Brenner et~al.(2009{\natexlab{a}})Brenner, Li, and
  Sung}]{BrLiSu2009a}
\leavevmode\vrule height 2pt depth -1.6pt width 23pt, \emph{Nonconforming
  {M}axwell eigensolvers}, J. Sci. Comput., 40 (2009{\natexlab{a}}), pp.
  51--85.

\bibitem[{Brenner et~al.(2009{\natexlab{b}})Brenner, Li, and
  Sung}]{BrLiSu2009b}
\leavevmode\vrule height 2pt depth -1.6pt width 23pt, \emph{A nonconforming
  penalty method for a two-dimensional curl-curl problem}, Math. Models Methods
  Appl. Sci., 19 (2009{\natexlab{b}}), pp. 651--668.

\bibitem[{Brenner and Sung(2009)}]{BrSu2009}
\textsc{S.~C. Brenner and L.-Y. Sung}, \emph{A quadratic nonconforming vector
  finite element for {$H({\rm curl};\Omega)\cap H({\rm div};\Omega)$}}, Appl.
  Math. Lett., 22 (2009), pp. 892--896.

\bibitem[{Cockburn et~al.(2009)Cockburn, Gopalakrishnan, and
  Lazarov}]{CoGoLa2009}
\textsc{B.~Cockburn, J.~Gopalakrishnan, and R.~Lazarov}, \emph{Unified
  hybridization of discontinuous {G}alerkin, mixed, and continuous {G}alerkin
  methods for second order elliptic problems}, SIAM J. Numer. Anal., 47 (2009),
  pp. 1319--1365.

\bibitem[{Costabel and Dauge(2000)}]{CoDa2000}
\textsc{M.~Costabel and M.~Dauge}, \emph{Singularities of electromagnetic
  fields in polyhedral domains}, Arch. Ration. Mech. Anal., 151 (2000), pp.
  221--276.

\bibitem[{Costabel and Dauge(2002)}]{CoDa2002}
\leavevmode\vrule height 2pt depth -1.6pt width 23pt, \emph{Weighted
  regularization of {M}axwell equations in polyhedral domains}, Numer. Math.,
  93 (2002), pp. 239--277.

\bibitem[{Crouzeix and Raviart(1973)}]{CrRa1973}
\textsc{M.~Crouzeix and P.-A. Raviart}, \emph{Conforming and nonconforming
  finite element methods for solving the stationary {S}tokes equations. {I}},
  Rev. Fran\c{c}aise Automat. Informat. Recherche Op\'{e}rationnelle S\'{e}r.
  Rouge, 7 (1973), pp. 33--75.

\bibitem[{Dauge(1988)}]{Dauge1988}
\textsc{M.~Dauge}, \emph{Elliptic boundary value problems on corner domains},
  vol. 1341 of Lecture Notes in Mathematics, Springer-Verlag, Berlin, 1988.

\bibitem[{Gibson et~al.(2020)Gibson, Mitchell, Ham, and Cotter}]{GiMiHaCo2020}
\textsc{T.~H. Gibson, L.~Mitchell, D.~A. Ham, and C.~J. Cotter}, \emph{{Slate}:
  extending {Firedrake's} domain-specific abstraction to hybridized solvers for
  geoscience and beyond}, Geosci. Model Dev., 13 (2020), pp. 735--761.

\bibitem[{Girault and Raviart(1986)}]{GiRa1986}
\textsc{V.~Girault and P.-A. Raviart}, \emph{Finite element methods for
  {N}avier-{S}tokes equations}, vol.~5 of Springer Series in Computational
  Mathematics, Springer-Verlag, Berlin, 1986.

\bibitem[{Hong et~al.(2022)Hong, Li, and Xu}]{HoLiXu2022}
\textsc{Q.~Hong, Y.~Li, and J.~Xu}, \emph{An extended {G}alerkin analysis in
  finite element exterior calculus}, Math. Comp., 91 (2022), pp. 1077--1106.

\bibitem[{Hong et~al.(2021)Hong, Wu, and Xu}]{HoWuXu2021}
\textsc{Q.~Hong, S.~Wu, and J.~Xu}, \emph{An extended {G}alerkin analysis for
  elliptic problems}, Sci. China Math., 64 (2021), pp. 2141--2158.

\bibitem[{Kondrat'ev(1967)}]{Kondratiev1967}
\textsc{V.~A. Kondrat'ev}, \emph{Boundary value problems for elliptic equations
  in domains with conical or angular points}, Tr. Mosk. Mat. Obs., 16 (1967),
  pp. 209--292.

\bibitem[{Kozlov et~al.(1997)Kozlov, Maz'ya, and Rossmann}]{KoMaRo1997}
\textsc{V.~A. Kozlov, V.~G. Maz'ya, and J.~Rossmann}, \emph{Elliptic boundary
  value problems in domains with point singularities}, vol.~52 of Mathematical
  Surveys and Monographs, American Mathematical Society, Providence, RI, 1997.

\bibitem[{Li and Zhang(2015)}]{LiZh2015}
\textsc{B.~Li and Z.~Zhang}, \emph{A new approach for numerical simulation of
  the time-dependent {G}inzburg-{L}andau equations}, J. Comput. Phys., 303
  (2015), pp. 238--250.

\bibitem[{Li and Zhang(2017)}]{LiZh2017}
\leavevmode\vrule height 2pt depth -1.6pt width 23pt, \emph{Mathematical and
  numerical analysis of the time-dependent {G}inzburg-{L}andau equations in
  nonconvex polygons based on {H}odge decomposition}, Math. Comp., 86 (2017),
  pp. 1579--1608.

\bibitem[{Mirebeau(2012)}]{Mirebeau2012}
\textsc{J.-M. Mirebeau}, \emph{Nonconforming vector finite elements for
  {$H({\rm curl};\,\Omega)\cap H({\rm div};\,\Omega)$}}, Appl. Math. Lett., 25
  (2012), pp. 369--373.

\bibitem[{Monk(2003)}]{Monk2003}
\textsc{P.~Monk}, \emph{Finite element methods for {M}axwell's equations},
  Numerical Mathematics and Scientific Computation, Oxford University Press,
  New York, 2003.

\bibitem[{Nazarov and Plamenevsky(1994)}]{NaPl1994}
\textsc{S.~A. Nazarov and B.~A. Plamenevsky}, \emph{Elliptic problems in
  domains with piecewise smooth boundaries}, vol.~13 of De Gruyter Expositions
  in Mathematics, Walter de Gruyter \& Co., Berlin, 1994.

\bibitem[{Rathgeber et~al.(2017)Rathgeber, Ham, Mitchell, Lange, Luporini,
  McRae, Bercea, Markall, and Kelly}]{RaHaMiLaLuMcBeMaKe2017}
\textsc{F.~Rathgeber, D.~A. Ham, L.~Mitchell, M.~Lange, F.~Luporini, A.~T.~T.
  McRae, G.-T. Bercea, G.~R. Markall, and P.~H.~J. Kelly}, \emph{Firedrake:
  automating the finite element method by composing abstractions}, ACM Trans.
  Math. Software, 43 (2017), pp. Art. 24, 27.

\bibitem[{Raviart and Thomas(1977)}]{RaTh1977}
\textsc{P.-A. Raviart and J.~M. Thomas}, \emph{Primal hybrid finite element
  methods for {$2$}nd order elliptic equations}, Math. Comp., 31 (1977), pp.
  391--413.

\bibitem[{Schatz(1974)}]{Schatz1974}
\textsc{A.~H. Schatz}, \emph{An observation concerning {R}itz-{G}alerkin
  methods with indefinite bilinear forms}, Math. Comp., 28 (1974), pp.
  959--962.

\bibitem[{Schneider(2021)}]{Schneider2021}
\textsc{C.~Schneider}, \emph{Beyond {S}obolev and {B}esov: regularity of
  solutions of {PDE}s and their traces in function spaces}, vol. 2291 of
  Lecture Notes in Mathematics, Springer, Cham, 2021.

\end{thebibliography}
\end{document}